\setlist[enumerate]{leftmargin=2em}
\setlist[itemize]{leftmargin=2em}
\definecolor{green}{rgb}{0,0.8,0} 
\newtheorem{theorem}{Theorem}[section]
\newtheorem{lemma}[theorem]{Lemma}
\newtheorem{proposition}[theorem]{Proposition}
\newenvironment{customthm}[1]
{\innercustomthm}
{\endinnercustomthm}
\theoremstyle{definition}
\theoremstyle{remark}
\newtheorem{remark}[theorem]{Remark}
\numberwithin{equation}{section}
\newcommand{\relphantom}[1]{\mathrel{\phantom{#1}}}
\newcommand{\nrm}{\@ifstar{\nrmb}{\nrmi}}
\newcommand{\nrmi}[1]{\Vert{#1}\Vert}
\newcommand{\nrmb}[1]{\left\Vert{#1}\right\Vert}
\newcommand{\abs}{\@ifstar{\absb}{\absi}}
\newcommand{\absi}[1]{\vert{#1}\vert}
\newcommand{\absb}[1]{\left\vert{#1}\right\vert}
\newcommand{\brk}{\@ifstar{\brkb}{\brki}}
\newcommand{\brki}[1]{\langle{#1}\rangle}
\newcommand{\brkb}[1]{\left\langle{#1}\right\rangle}
\newcommand{\set}{\@ifstar{\setb}{\seti}}
\newcommand{\seti}[1]{\{#1\}}
\newcommand{\setb}[1]{\left\{ #1\right\}}
\newcommand{\tld}[1]{\widetilde{#1}}
\newcommand{\VERT}[1]{{\left\vert\kern-0.25ex\left\vert\kern-0.25ex\left\vert #1 
    \right\vert\kern-0.25ex\right\vert\kern-0.25ex\right\vert}}
\DeclareMathOperator{\supp}{supp}
\let\Re\relax
\DeclareMathOperator{\Re}{Re}
\newcommand{\aeq}{\simeq}
\newcommand{\aleq}{\lesssim}
\newcommand{\ageq}{\gtrsim}
\newcommand{\lap}{\Delta}
\newcommand{\ud}{\mathrm{d}}
\newcommand{\rd}{\partial}
\newcommand{\nb}{\nabla}
\newcommand{\peq}{\relphantom{=}}			
\newcommand{\alp}{\alpha}
\newcommand{\bt}{\beta}
\newcommand{\dlt}{\delta}
\newcommand{\eps}{\epsilon}
\newcommand{\kpp}{\kappa}
\newcommand{\lmb}{\lambda}
\newcommand{\tht}{\theta}
\newcommand{\omg}{\omega}
\newcommand{\Omg}{\Omega}
\newcommand{\bfb}{{\bf b}}
\newcommand{\bfe}{{\bf e}}
\newcommand{\bfp}{{\bf p}}
\newcommand{\bfu}{{\bf u}}
\newcommand{\bfv}{{\bf v}}
\newcommand{\bfx}{{\bf x}}
\newcommand{\bfy}{{\bf y}}
\newcommand{\bfB}{{\bf B}}
\newcommand{\bfU}{{\bf U}}
\newcommand{\bfV}{{\bf V}}
\newcommand{\bfW}{{\bf W}}
\newcommand{\bfPi}{{\mathbf{\Pi}}}
\newcommand{\bbN}{\mathbb N}
\newcommand{\bbP}{\mathbb P}
\newcommand{\bbR}{\mathbb R}
\newcommand{\bbT}{\mathbb T}
\newcommand{\bbZ}{\mathbb Z}
\newcommand{\calB}{\mathcal B}
\newcommand{\calC}{\mathcal C}
\newcommand{\calI}{\mathcal I}
\newcommand{\calL}{\mathcal L}
\newcommand{\calS}{\mathcal S}
\newcommand{\frkG}{\mathfrak G}
\newcommand{\frkb}{\mathfrak b}
\newcommand{\org}[1]{
	\addtocontents{toc}{\protect\setcounter{tocdepth}{1}}
	\subsection*{Organization of the paper} {#1}
	\addtocontents{toc}{\protect\setcounter{tocdepth}{2}} }
\newcommand{\ackn}[1]{
\addtocontents{toc}{\protect\setcounter{tocdepth}{1}}
\subsection*{Acknowledgements} {#1}
\addtocontents{toc}{\protect\setcounter{tocdepth}{2}} }
\newcommand{\bgB}{\mathring{\bfB}}				
\newcommand{\bfomg}{\boldsymbol{\omg}}		
\newcommand{\err}{\boldsymbol{\epsilon}}		
\newcommand{\errh}{\boldsymbol{\delta}}			
\newcommand{\tu}{\tilde{u}}					
\newcommand{\tb}{\tilde{b}}					
\newcommand{\tf}{\tilde{f}}
\newcommand{\tPi}{\tilde{\Pi}}
\newcommand{\tpsi}{\tilde{\psi}}					
\newcommand{\tomg}{\tilde{\omg}}				
\begin{document}

\title[]{On illposedness of the Hall and electron magnetohydrodynamic equations without resistivity on the whole space}
\author{In-Jee Jeong}%
\address{Department of Mathematical Sciences and RIM, Seoul National University, and School of Mathematics, Korea Institute for Advanced Study, Seoul, Republic of Korea.}%
\email{injee\_j@snu.ac.kr}%

\author{Sung-Jin Oh}%
\address{Department of Mathematics, UC Berkeley, Berkeley (CA), USA and School of Mathematics, Korea Institute for Advanced Study, Seoul, Republic of Korea.}%
\email{sjoh@math.berkeley.edu}%

\begin{abstract}
It has been shown in our previous work \cite{JO1} that the incompressible and irresistive Hall- and electron-magnetohydrodynamic (MHD) equations are illposed on flat domains $M = \mathbb{R}^k \times \mathbb{T}^{3-k}$ for $0 \le k \le 2$. The data and solutions therein were assumed to be independent of one coordinate, which not only significantly simplifies the systems but also allows for a large class of steady states. In this work, we remove the assumption of independence and conclude strong illposedness for compactly supported data in $\mathbb{R}^3$. This is achieved by constructing degenerating wave packets for linearized systems around time-dependent axisymmetric magnetic fields. A few main additional ingredients are: a more systematic application of the generalized energy estimate, use of the Bogovski\v{i} operator, and a priori estimates for axisymmetric solutions to the Hall- and electron-MHD systems. 
\end{abstract}

\maketitle


\section{Introduction}

\subsection{The systems} In this paper, we are concerned with the Cauchy problem for the incompressible \emph{Hall-magnetohydrodynamics} (Hall-MHD for short) equations, which describe the motion of a plasma whose electron speed is significantly faster than the ion speed. Still, the equations give a single-fluid description of the plasma with $\bfu(t) : M \rightarrow \mathbb{R}^3$ as the so-called bulk plasma velocity. Together with the magnetic field $\bfB(t): M \rightarrow \mathbb{R}^3$, the equations are given by 
\begin{equation} \tag{Hall-MHD} \label{eq:hall-mhd}
\left\{
\begin{aligned}
&\rd_{t} \bfu + \bfu \cdot \nb \bfu + \nb \bfp  - \nu \lap \bfu = (\nb \times \bfB) \times \bfB,  \\
&\rd_{t} \bfB - \nb \times (\bfu \times \bfB) + \nb \times ((\nb \times \bfB) \times \bfB)= 0, \\
&\nb \cdot \bfu = \nb \cdot \bfB = 0.
\end{aligned}
\right.
\end{equation} Here, $\bfp(t): M \rightarrow \mathbb{R}$ is the plasma pressure and $\nu \ge 0$ is the plasma viscosity. We assume that the domain is given by a flat manifold $M = \bbT^k\times\bbR^{3-k}$ ($0\le k \le 3$). Note that there is no dissipation term for the magnetic field (i.e. no magnetic resistivity), which will be assumed throughout the paper. The special case $\nu = 0$ is called the \emph{ideal Hall-MHD equation}. This is just the usual ideal MHD equation with an extra term $ \nb \times ((\nb \times \bfB) \times \bfB)$ on the left hand side, which is referred to as the Hall current term. This modification over the usual MHD system was suggested by Lighthill \cite{Light} to take into account the disparity between the electron speed and the ion speed, and is known to be directly responsible for various physical phenomena including collisionless magnetic reconnection, planetary magnetospheres, and magnetic field dynamics in neutron stars (\cite{BSD,SDRD,HoGr,DKM,YBMH,SRF,RaFi,DGCT,LSDP,GoRe,Jones,GoCu,WoHoLy}). We shall also consider the simpler variant obtained by formally taking $\bfu = 0$, called the electron-MHD (E-MHD for short) equations: 
\begin{equation} \tag{E-MHD} \label{eq:e-mhd}
\left\{
\begin{aligned}
&\rd_{t} \bfB + \nb \times ((\nb \times \bfB) \times \bfB)= 0, \\
&\nb \cdot \bfB = 0. 
\end{aligned}
\right.
\end{equation} It turns out that, as long as the issue of local wellposedness is concerned, the reduced system \eqref{eq:e-mhd} serves as a good approximation for the more realistic \eqref{eq:hall-mhd}. Due to the potential loss of derivatives present in the Hall current term, the basic question of local wellposedness had been open in the irresistive case. In \cite{JO1} we have established that the Cauchy problem for the equations \eqref{eq:hall-mhd} and \eqref{eq:e-mhd} are \textit{strongly illposed} in arbitrarily high Sobolev spaces for smooth initial data, under the simplifying assumption that the data is independent of one coordinate (this will be always assumed to be the $z$-direction). This in particular forced the domain $M$ to have at least one component of $\bbT$, for the solutions to have finite energy. The resulting simplified equations are commonly referred to as $2+\frac12$ dimensional systems, and have been extensively investigated both in physical and mathematical literature (\cite{BaeKang,RaYa,Ya,CWo2,Du1,Dai3,JO1}). 



The goal of the current paper is to remove the assumption of $z$-independence and conclude illposedness of the Cauchy problem for \eqref{eq:hall-mhd} and \eqref{eq:e-mhd} in the remaining case of $M = \bbR^3$. A simple version of our main result is as follows:
\begin{theorem}[Nonlinear illposedness, nonexistence, simple version] \label{thm:main-simple}
\begin{enumerate}
\item For any $s > \frac{7}{2}$ and $\eps > 0$, there exists $\bfB_{0} \in H^{s}(\bbR^{3})$ satisfying $\nb \cdot \bfB_{0} = 0$ and $\nrm{\bfB_{0}}_{H^{s}} < \eps$ such that there is no associated solution to \eqref{eq:e-mhd} in $L^{\infty}_{t}([0, \dlt]; H^{s})$  with initial data $\bfB(t=0) = \bfB_{0}$ for any $\dlt > 0$.
\item For any $s > \frac{7}{2}$ and $\eps > 0$, there exists $\bfB_{0} \in H^{s}(\bbR^{3})$ satisfying $\nb \cdot \bfB_{0} = 0$ and $\nrm{\bfB_{0}}_{H^{s}} < \eps$ such that there is no solution to \eqref{eq:hall-mhd} in $L^{\infty}_{t}([0, \dlt]; H^{s})$ with initial data $(\bfu, \bfB)(t=0) = (0, \bfB_{0})$ for any $\dlt > 0$.
\end{enumerate}
\end{theorem}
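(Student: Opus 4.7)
The plan is to adapt the degenerating wave packet strategy of \cite{JO1} to a genuinely three-dimensional setting by using time-dependent \emph{axisymmetric} backgrounds in place of the $z$-independent ones considered there. Concretely, my first task is to produce a short-lived, compactly supported, axisymmetric solution $\mean{\bfB}(t)$ of \eqref{eq:e-mhd} (and of \eqref{eq:hall-mhd} with $\bfu(0)=0$) enjoying high-regularity bounds strictly better than $H^{s}$. Although the system is illposed in the generic $H^{s}$ class, axisymmetry is propagated by the nonlinearity and the directional structure of $\mean{\bfB}$ suppresses the transversal high-frequency instability exploited below, so that one can close a priori estimates at regularity exceeding $s$; this is the role of the ``a priori estimates for axisymmetric solutions'' highlighted in the abstract.

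Next, I would linearize \eqref{eq:e-mhd} around $\mean{\bfB}$ and construct a family of geometric-optics wave packets
\[
\tld{\bfB}_{\lmb}(t,x)=\Re\bigl(\bfA(t,x)\,e^{i\lmb\Phi(t,x)}\bigr), \qquad \lmb\gg 1,
\]
where the phase $\Phi$ solves a Hamilton--Jacobi equation driven by the principal symbol of the linearized Hall current $\nb\times\bigl((\nb\times\tld{\bfB})\times\mean{\bfB}+(\nb\times\mean{\bfB})\times\tld{\bfB}\bigr)$ and the amplitude $\bfA$ satisfies the associated transport equation along the bicharacteristics. By choosing the initial phase so that the corresponding bicharacteristic degenerates (for instance, by exiting $\{\mean{\bfB}\neq 0\}$ in finite time), the Jacobian of the characteristic flow collapses and the amplitude ODE forces $\tld{\bfB}_{\lmb}$ to grow in high norms at a rate faster than any inverse power of $\lmb$. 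A new technical ingredient, not needed in the $z$-independent argument of \cite{JO1}, is the Bogovski\v{i} operator, which I would use to correct $\tld{\bfB}_{\lmb}$ into an exactly divergence-free, compactly supported object without destroying either its microlocalization or its amplitude asymptotics. For \eqref{eq:hall-mhd} one additionally has to argue that the velocity $\bfu$, starting from $0$, remains too small on the short wave-packet time scale to perturb the principal dynamics of $\bfB$.

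With these wave packets in hand, the proof of Theorem~\ref{thm:main-simple} proceeds by combining a generalized energy estimate with a superposition/Baire argument. Pairing any hypothetical $\bfB\in L^{\infty}_{t}([0,\dlt];H^{s})$ against $\tld{\bfB}_{\lmb}$, and exploiting the special algebraic structure of the Hall current (which is exactly what the ``more systematic application of the generalized energy estimate'' of the abstract is designed to extract), yields an identity whose right-hand side diverges as $\lmb\to\infty$ whenever the initial data is chosen of the form $\mean{\bfB}_{0}+\errh_{\lmb}$ with a suitably small perturbation $\errh_{\lmb}$; this rules out solutions at scale $\lmb$. One then constructs the claimed data by superposing $\errh_{\lmb_{n}}$ over a sequence $\lmb_{n}\to\infty$ with weights ensuring $\nrm{\bfB_{0}}_{H^{s}}<\eps$, and upgrades per-scale nonexistence to nonexistence on every $[0,\dlt]$ through a Baire-type density argument. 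I expect the principal obstacle to lie in the wave-packet construction: in the $z$-independent setting of \cite{JO1} the bicharacteristic flow is effectively two-dimensional and divergence-freeness is enforced via a stream function, whereas for an axisymmetric background on $\bbR^{3}$ one must simultaneously maintain compact support, divergence-freeness, and sharp microlocalization along a degenerating characteristic---exactly where the combination of Bogovski\v{i}'s operator and the systematic generalized energy identity becomes essential.
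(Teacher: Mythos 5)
Your overall architecture matches the paper's: a time-dependent axisymmetric background controlled by separate a priori estimates (the axisymmetric reduction of \eqref{eq:e-mhd} is just the inviscid Burgers equation in $(t,z)$, which is why the estimates close above $H^{s}$), WKB wave packets degenerating toward the zero set of the background, a Bogovski\v{i} correction to restore the divergence-free condition with compact support, and a generalized energy estimate pairing the hypothetical solution against the wave packet. However, there is a genuine gap in your final step. Pairing against a single wave packet at scale $\lmb$ does \emph{not} ``rule out solutions at scale $\lmb$'': it only shows that any solution must satisfy $\nrm{\bfB(t)}_{H^{s_{0}}}\gtrsim \eps\, e^{c\lmb t}\lmb^{s_{0}-s}$, i.e.\ unboundedness of the solution map (Theorem~\ref{thm:illposed-strong-prime}), which is perfectly compatible with existence in $L^{\infty}_{t}H^{s}$. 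A ``Baire-type density argument'' over initial data cannot convert this into nonexistence for a \emph{specific} datum; genericity of large norms is not the same as absence of a solution. The paper instead superposes countably many perturbations at frequencies $\lmb_{k}=2^{Nk}$ supported near \emph{spatially separated} degenerate points $z_{k}$ along the $z$-axis (each attached to its own rescaled copy of the background), and shows that at a sequence of times $t^{*}_{k}\sim \lmb_{k}^{-s/(s+1)}\to 0$ the $k$-th packet alone forces $\nrm{\bfB(t^{*}_{k})}_{H^{s}}\gtrsim 2^{c_{s}k}\to\infty$, contradicting membership in $L^{\infty}_{t}H^{s}$ on any $[0,\dlt]$.

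The technical heart of that upgrade, which your proposal does not address, is controlling the \emph{interaction} between the countably many pieces: both the local $L^{2}$ energy of $b$ and the generalized energy identity must be localized to the support of each packet, which the paper achieves with cutoffs $\widetilde{\chi}_{k}$ having exponentially decaying tails (so that $|\widetilde{\chi}_{k}'|\le 2^{k}|\widetilde{\chi}_{k}|$ and cross-terms from neighboring packets are provably negligible relative to $\lmb_{k}^{-s}$). Without this localization the pairing $\brk{b,\chi_{k}\tb_{k}}$ cannot be bounded below uniformly in $k$. Two further points worth correcting: (i) the mechanism actually exploited is the exponential \emph{decay} of the wave packet in low-regularity norms ($L^{2}_{\tht}L^{p}_{r\ud r}$, $p<2$) combined with near-conservation of the $L^{2}$ pairing, rather than growth of the packet in high norms; and (ii) for \eqref{eq:hall-mhd} the issue is not merely that $\bfu$ stays small, but that the induced velocity moves the degenerate set of $\bfPi$, which is why the growth is only guaranteed on intervals of length $O(\lmb^{-1}\ln\lmb)$ --- still enough, but it must be quantified via the small-time estimates on $\bfV$ and $\Pi-\Pi^{e}$.
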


The precise statements of the main theorems may be found in Section~\ref{subsec:main-results} below. We remark that in the case of \eqref{eq:e-mhd}, we may even take $\bfB_{0}$ to be compactly supported. Our nonlinear illposedness results (Theorems~\ref{thm:illposed-strong-prime} and \ref{thm:illposed-strong2-prime}) are based on quantitative illposedness around \textit{time-dependent degenerate axisymmetric} magnetic fields (Theorem \ref{thm:norm-growth-prime}). 

The remainder of the introduction is structured as follows. After reviewing the setup and results from \cite{JO1} in {\bf Sections~\ref{subsec:stationary}} and {\bf \ref{subsec:linear-review}}, we introduce the aforementioned class of time-dependent degenerate axisymmetric solutions to \eqref{eq:e-mhd} and \eqref{eq:hall-mhd} in {\bf Section~\ref{sec:background-new}}. Then in {\bf Section~\ref{subsec:main-results}}, we state the precise versions of the main theorems, and in {\bf Section~\ref{subsec:ideas}}, we discuss the main ideas of the proof. Finally, in {\bf Section~\ref{subsec:notation}}, we collect some of the notation and conventions used throughout the paper and conclude with an outline of the paper.

\subsection {Energy identities, stationary solutions and linearization} \label{subsec:stationary}
A fundamental property satisfied by \eqref{eq:hall-mhd} and \eqref{eq:e-mhd}  is the energy identity. In the following proposition, it will be assumed that $M = \bbT^{k} \times \bbR^{3-k}$  and the solutions have sufficient smoothness and decay at spatial infinity. 
\begin{proposition} \label{prop:nonlin-en}
	For a solution $(\bfu, \bfB)$ to \eqref{eq:hall-mhd}, we have
	\begin{equation*}
	\frac{\ud}{\ud t} \left( \frac{1}{2} \int_{M} (\abs{\bfu}^{2} + \abs{\bfB}^{2})(t) \, \ud x \ud y \ud z \right) = - \nu \int_{M} \abs{\nb \bfu}^{2}(t) \, \ud x \ud y \ud z.
	\end{equation*}
	Similarly, for a solution $\bfB$ to \eqref{eq:e-mhd}, we have
	\begin{equation*}
	\frac{\ud}{\ud t} \left( \frac{1}{2} \int_{M} \abs{\bfB}^{2}(t) \, \ud x \ud y \ud z \right)= 0.
	\end{equation*}
\end{proposition}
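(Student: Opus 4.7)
The plan is to derive both identities by standard $L^2$ energy estimates: pair each evolution equation with its unknown, integrate over $M$, and use the divergence-free conditions together with vector calculus identities to identify the terms that cancel. The decay/periodicity assumptions on the solution will justify all integrations by parts and discarding of boundary terms.

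For \eqref{eq:hall-mhd}, I first take the $L^{2}(M)$ inner product of the momentum equation with $\bfu$. The transport term gives $\int \bfu \cdot (\bfu \cdot \nb)\bfu = \frac{1}{2}\int \bfu \cdot \nb \abs{\bfu}^{2} = 0$ after integration by parts using $\nb \cdot \bfu = 0$; the pressure term gives $\int \bfu \cdot \nb \bfp = 0$ for the same reason; and the viscous term produces $\nu \int \abs{\nb \bfu}^{2}$. This yields
\begin{equation*}
\frac{1}{2}\frac{\ud}{\ud t}\int_{M} \abs{\bfu}^{2} + \nu \int_{M}\abs{\nb \bfu}^{2} = \int_{M} \bfu \cdot \bigl((\nb \times \bfB) \times \bfB\bigr).
\end{equation*}
Next I pair the induction equation with $\bfB$. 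Using $\int \bfB \cdot (\nb \times \bfX) = \int (\nb \times \bfB) \cdot \bfX$ (valid by integration by parts with decaying/periodic boundary conditions), the advective term contributes $-\int (\nb \times \bfB) \cdot (\bfu \times \bfB)$, while the Hall term contributes $\int (\nb \times \bfB) \cdot \bigl((\nb \times \bfB) \times \bfB\bigr)$, which vanishes identically because $\bfa \cdot (\bfa \times \bfb) = 0$. This yields
\begin{equation*}
\frac{1}{2}\frac{\ud}{\ud t}\int_{M} \abs{\bfB}^{2} = \int_{M} (\nb \times \bfB) \cdot (\bfu \times \bfB).
\end{equation*}

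Summing the two identities, the remaining right-hand sides cancel by the scalar triple product identity $\bfu \cdot \bigl((\nb \times \bfB) \times \bfB\bigr) = -(\nb \times \bfB) \cdot (\bfu \times \bfB)$, which gives the claimed identity for \eqref{eq:hall-mhd}. For \eqref{eq:e-mhd} only the magnetic energy computation is needed, and the Hall term again vanishes by $\bfa \cdot (\bfa \times \bfb) = 0$, so $\frac{\ud}{\ud t}\int_{M}\abs{\bfB}^{2} = 0$.

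There is no real obstacle here, since the statement assumes enough regularity and decay for all manipulations to be justified. The only points requiring care are (i) confirming that the assumed smoothness and spatial decay suffice to make every boundary term vanish when integrating by parts on $M = \bbT^{k}\times \bbR^{3-k}$, and (ii) correctly tracking signs in the two applications of the scalar triple product so that the Lorentz work on the fluid and the motional EMF work on the field cancel exactly.
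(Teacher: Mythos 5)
Your proposal is correct and follows the same route the paper indicates: pair each equation with its own unknown, integrate by parts, kill the transport and pressure terms via $\nb\cdot\bfu=0$, observe that the Hall term vanishes since $(\nb\times\bfB)\cdot((\nb\times\bfB)\times\bfB)=0$, and cancel the Lorentz work against the motional EMF term by the scalar triple product. The paper only sketches this (it explicitly calls the proof straightforward and records the same Hall-term cancellation), so there is nothing to add.
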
 

\noindent The proofs of the above identities are straightforward; one simply multiply the equations for $\bfu$ and $\bfB$ by $\bfu$ and $\bfB$ respectively, integrate over the domain $M$, and use integration by parts. Instead of supplying a proof, we just note that the Hall current term drops out by the symmetric nature of $\nabla\times$:  
\begin{equation*}
\int_{M} \bfB \cdot (\nb \times ((\nb \times \bfB) \times \bfB) \, \ud x \ud y \ud z
= \int_{M} (\nb \times \bfB) \cdot ((\nb \times \bfB) \times \bfB) \, \ud x \ud y \ud z = 0.
\end{equation*} This symmetry in general allows us to gain one derivative back from the Hall current term; if one attempts to control $\nrm{\rd^{(N)} \bfB(t)}_{L^{2}}$ for a solution $\bfB$ to \eqref{eq:e-mhd} ($\rd^{(N)}$ refers to an $N$-th order spatial derivative), we have from the Hall term a contribution of the form
\begin{equation} \label{eq:d-loss}
\frac{1}{2} \frac{\ud}{\ud t} \int_{M} \abs{\rd^{(N)} \bfB}^{2} \, \ud x \ud y \ud z
= - \int_{M} (\nb \times \rd^{(N)} \bfB) \cdot ((\nb \times \bfB) \times \rd^{(N)} \bfB) \, \ud x \ud y \ud z + \cdots
\end{equation}
where the other terms only involve up to $N$ derivatives of $\bfB$. It is not clear how to handle this integral on the right hand side, and the main results of this paper and \cite{JO1} confirms that there are situations where this expression is an actual loss of a derivative. (In contrast to equations for which the Cauchy--Kovalevskaya theorem is applicable, the situation is not better at all in analytic or even in Gevrey regularity classes; see \cite[Section 6]{JO1}.) 


While the systems \eqref{eq:hall-mhd} and \eqref{eq:e-mhd} possess a large set of stationary solutions, a special class of them played a crucial role in \cite{JO1}, namely \textit{planar stationary magnetic fields with an additional symmetry}. We first recall the definition: \begin{itemize}
	\item (Stationary magnetic field) The solution is of the form $\bfB = \bgB$ for \eqref{eq:e-mhd}, and $(\bfu, \bfB) = (0, \bgB)$ for \eqref{eq:hall-mhd}, where $\bgB$ is a $t$-independent vector field on $\bbR^{3}$ such that $\nb \cdot \bgB = 0$ (divergence-free) and $(\nb \times \bgB) \times \bgB$ is a pure gradient.
	
	\item (Planarity) $\bgB$ is independent of the $z$-coordinate and $\bgB^{z}$ = 0. 
	\item (Additional symmetry) $\bgB = \bgB^{x} \rd_{x} + \bgB^{y} \rd_{y}$, viewed as a vector field on $\bbR^{2}_{x, y}$, is invariant under a one-parameter family of isometries of $\bbR^{2}_{x, y}$.
\end{itemize} The following classification result can be shown: \begin{proposition}[{{\cite[Section 2.2]{JO1}}}] \label{prop:planar-stat}
A smooth planar stationary magnetic field with an additional symmetry is, up to symmetries, one of the following forms: ($f, g$ are smooth and $c_{0}, c_{1}, d \in \bbR$)
\begin{equation*}
\bgB = f(y) \rd_{x}, \quad (c_{1} y + c_{0}) \rd_{x} + d \rd_{y}, \quad g(x^{2} + y^{2}) (x \rd_{y} - y \rd_{x}).
\end{equation*}
\end{proposition}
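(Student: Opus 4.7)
\emph{Step 1: Normalization of the symmetry vector.} The isometry group $\mathrm{Isom}(\bbR^{2}) = \mathrm{O}(2) \ltimes \bbR^{2}$ has Lie algebra spanned by the translations $\rd_{x}, \rd_{y}$ and the rotation $x\rd_{y} - y \rd_{x}$. An infinitesimal generator of a one-parameter family of isometries has the form $X = \alpha \rd_{x} + \beta \rd_{y} + \gamma (x\rd_{y} - y\rd_{x})$ with $(\alpha, \beta, \gamma) \ne 0$, and can be put into normal form by conjugation: if $\gamma = 0$, rotate so that $X$ is proportional to $\rd_{x}$; if $\gamma \ne 0$, translate the unique fixed point of $X$ to the origin, which turns $X$ into a multiple of $x\rd_{y} - y\rd_{x}$. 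Hence, up to isometries of $\bbR^{2}$, the symmetry vector is either $\rd_{x}$ (translation) or $x\rd_{y} - y\rd_{x}$ (rotation about the origin), and it suffices to analyze these two cases.

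\emph{Step 2: Translation-symmetric case.} Planarity together with invariance under $\rd_{x}$ gives $\bgB = a(y) \rd_{x} + b(y) \rd_{y}$. The divergence-free condition reduces to $b'(y) = 0$, so $b \equiv d \in \bbR$. A direct computation yields
\begin{equation*}
(\nb \times \bgB) \times \bgB = (d\, a'(y), \, -a(y) a'(y), \, 0).
\end{equation*}
For this to be a pure gradient $\nb p$, integrating $\rd_{x} p = d\, a'(y)$ in $x$ gives $p = d\, a'(y)\, x + h(y)$; matching $\rd_{y} p = -a(y) a'(y)$ then forces the compatibility $d\, a''(y) \equiv 0$. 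If $d = 0$, the function $a = f$ is unconstrained and $\bgB = f(y) \rd_{x}$; if $d \ne 0$, then $a$ is affine and $\bgB = (c_{1} y + c_{0}) \rd_{x} + d \rd_{y}$. This recovers the first two normal forms.

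\emph{Step 3: Rotation-symmetric case.} In polar coordinates $(r, \tht)$, an $\mathrm{SO}(2)$-invariant planar vector field has the form $\bgB = A(r) \hat r + B(r) \hat\tht$. The divergence-free condition reads $\nb \cdot \bgB = r^{-1}(r A(r))' = 0$, forcing $r A(r) \equiv \mathrm{const}$. Since $\hat r = r^{-1}(x \rd_{x} + y \rd_{y})$ is not smooth at the origin, smoothness of $\bgB$ at $0$ rules out the $A(r) \sim 1/r$ branch and forces $A \equiv 0$. Likewise, $\hat\tht = r^{-1}(-y \rd_{x} + x \rd_{y})$, so smoothness of $B(r) \hat\tht$ at the origin requires $B(r)/r$ to extend to a smooth rotationally symmetric function on $\bbR^{2}$, which must be of the form $g(x^{2}+y^{2})$ with $g$ smooth. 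Thus $\bgB = g(x^{2}+y^{2})(x \rd_{y} - y \rd_{x})$, the third form. The stationarity condition is then automatic: $\nb \times \bgB = (0, 0, \omg(r^{2}))$ for some smooth $\omg$, and $(\nb \times \bgB) \times \bgB$ has the form $(x\, h(r^{2}), \, y\, h(r^{2}), \, 0)$, which equals $\tfrac{1}{2}\nb H(r^{2})$ for any antiderivative $H$ of $h$.

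The only genuinely subtle point is the smoothness analysis at the origin in Step 3, which eliminates the singular divergence-free radial branch $A(r) = c/r$; the remaining algebraic steps reduce directly to the divergence-free and pure-gradient conditions applied to explicit two-variable vector fields.
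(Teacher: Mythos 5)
Your proof is correct and follows the standard route: normalize the Killing field to a translation or a rotation, then impose the divergence-free and pure-gradient conditions (plus smoothness at the origin in the rotational case, where you rightly kill the $c/r$ radial branch). The present paper does not reprove this proposition but defers to \cite[Section 2.2]{JO1}, whose argument is essentially the one you give, so there is nothing further to flag.
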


\subsubsection{Linearized systems around stationary solutions} Given a stationary solution to \eqref{eq:hall-mhd} of the form $(0, \bgB)$, let us consider perturbations of the form $(\bfu, \bfB) = (u, \bgB + b)$. The linearized equation satisfied by $(u, b)$ (i.e., the linearization of \eqref{eq:hall-mhd} around $\bgB$) is:
\begin{equation} \label{eq:hall-mhd-lin}
\left\{
\begin{aligned}
& \rd_{t} u - \nu \lap u = \bbP ((\nb \times \bgB) \times b + (\nb \times b) \times \bgB) \\
& \rd_{t} b + \nb \times (u \times \bgB) + \nb \times ((\nb \times b) \times \bgB) + \nb \times ((\nb \times \bgB) \times b) = 0, \\
& \nb \cdot u = \nb \cdot b = 0,
\end{aligned}
\right.
\end{equation}
where $\bbP$ is the Leray projection operator onto divergence-free vector fields. In the case of \eqref{eq:e-mhd}, the linearization is simply given by 
\begin{equation} \label{eq:e-mhd-lin}
\left\{
\begin{aligned}
& \rd_{t} b + \nb \times ((\nb \times b) \times \bgB) + \nb \times ((\nb \times \bgB) \times b) = 0, \\
& \nb \cdot b = 0.
\end{aligned}
\right.
\end{equation} The $L^2$-norm of the perturbation is (formally) under control, for the linearized systems.
\begin{proposition} \label{prop:lin-en}
	For a sufficiently regular and decaying solution $(u, b)$ to \eqref{eq:hall-mhd-lin}, we have
	\begin{equation} \label{eq:lin-en-hall}
	\begin{aligned}
	& \frac{\ud}{\ud t} \left(\frac{1}{2} \int_{M} \abs{u}^{2}(t) + \abs{b}^{2}(t) \, \ud x \ud y \ud z \right) + \nu \int_{M} \abs{\nb u}^{2}(t) \, \ud x \ud y \ud z \\
	& =  \int_{M} ((b \cdot \nb) \bgB_{j}) u^{j} - ((u \cdot \nb) \bgB_{j}) b^{j} \, \ud x \ud y \ud z 
	+ \int_{M} ((b \cdot \nb) (\nb \times \bgB)_{j}) b^{j} \, \ud x \ud y \ud z.
	\end{aligned}
	\end{equation} Similarly, for a sufficiently regular and decaying solution $b$ to \eqref{eq:e-mhd-lin}, we have
	\begin{equation} \label{eq:lin-en-e}
	\frac{\ud}{\ud t} \left(\frac{1}{2} \int_{M} \abs{b}^{2}(t) \, \ud x \ud y \ud z \right)
	=  \int_{M} ((b \cdot \nb) (\nb \times \bgB)_{j}) b^{j} \, \ud x \ud y \ud z.
	\end{equation}
\end{proposition}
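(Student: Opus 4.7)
The plan is to take $L^{2}$ inner products of the two linearized equations with the perturbations themselves and integrate over $M$, leveraging $\nb \cdot u = \nb \cdot b = 0$, $\nb \cdot \bgB = 0$, and the automatic identity $\nb \cdot (\nb \times \bgB) = 0$ throughout. The main structural feature to exploit is the same antisymmetry that made the Hall term drop out of Proposition~\ref{prop:nonlin-en}: after sliding the outer curl off $b$ via a single integration by parts, the principal quasilinear contribution
\begin{equation*}
\int_M b \cdot \nb \times ((\nb \times b) \times \bgB) \, \ud x \ud y \ud z = \int_M (\nb \times b) \cdot ((\nb \times b) \times \bgB) \, \ud x \ud y \ud z
\end{equation*}
vanishes pointwise from $X \cdot (X \times Y) = 0$. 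This removes the potentially derivative-losing piece and leaves only terms in which $\bgB$ or $\nb \times \bgB$ carries no extra derivatives of $b$.

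For \eqref{eq:lin-en-e}, the remaining contribution $\int b \cdot \nb \times ((\nb \times \bgB) \times b)$ can be handled by applying the vector identity $\nb \times (A \times B) = (B \cdot \nb) A - (A \cdot \nb) B + A (\nb \cdot B) - B (\nb \cdot A)$ with $A = \nb \times \bgB$ and $B = b$; the last two pieces vanish by the two divergence-free conditions, leaving $(b \cdot \nb)(\nb \times \bgB) - ((\nb \times \bgB) \cdot \nb) b$. Pairing with $b$, the second piece integrates by parts to $-\tfrac{1}{2} \int (\nb \cdot (\nb \times \bgB)) \abs{b}^{2} = 0$, so only the first piece survives and yields the right-hand side of \eqref{eq:lin-en-e} (with a sign that I would verify against my normalization of the time derivative).

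For \eqref{eq:lin-en-hall} I would also pair the $u$-equation against $u$. Because $\bbP u = u$, this produces $-\nu \int \abs{\nb u}^{2}$ together with $\int u \cdot ((\nb \times \bgB) \times b) + \int u \cdot ((\nb \times b) \times \bgB)$. The key MHD-type cancellation is that this second ``forcing'' integral is cancelled by the induction term from the $b$-equation: one integration by parts gives
\begin{equation*}
\int_M b \cdot \nb \times (u \times \bgB) \, \ud x \ud y \ud z = \int_M (\nb \times b) \cdot (u \times \bgB) \, \ud x \ud y \ud z,
\end{equation*}
and the cyclic triple-product identity rewrites the right-hand side as $-\int u \cdot ((\nb \times b) \times \bgB)$. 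What survives from the $u$-energy is then the purely pointwise integrand $u \cdot ((\nb \times \bgB) \times b)$, which by the $\eps$-contraction $\eps_{ijk} \eps^{jlm} = \dlt^{l}_{k} \dlt^{m}_{i} - \dlt^{l}_{i} \dlt^{m}_{k}$ reduces algebraically (no integration by parts needed) to $((b \cdot \nb) \bgB_{j}) u^{j} - ((u \cdot \nb) \bgB_{j}) b^{j}$, giving the first integral on the right of \eqref{eq:lin-en-hall}.

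I do not anticipate a real obstacle: the entire argument is the algebraic Hall-current cancellation (via $X \cdot (X \times Y) = 0$) together with the familiar induction/Lorentz cancellation of ideal MHD (via the cyclic triple product), both of which persist under linearization, plus one pointwise $\eps$-contraction. The main care required is to check that every integration by parts is justified by the assumed regularity and decay at spatial infinity on $M = \bbT^{k} \times \bbR^{3-k}$, and to keep the signs straight when moving the curl across by parts.
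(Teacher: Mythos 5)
Your proposal is correct and follows exactly the standard route that the paper (which omits the proof as ``straightforward'') has in mind: pair each equation with the corresponding unknown, kill the Hall term by $X \cdot (X \times Y) = 0$ after one integration by parts, cancel the induction term against the Lorentz forcing via the cyclic triple product, and reduce the remaining zeroth-order terms with \eqref{eq:curl-cross}--\eqref{eq:cross-curl} and the three divergence-free conditions.

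Two sign remarks, since you explicitly left one open. First, your induction/Lorentz cancellation requires the induction term to enter the $b$-equation as $-\nb \times (u \times \bgB)$; this is indeed what \eqref{eq:hall-mhd} linearizes to (and what the later time-dependent system \eqref{eq:axisym-hall-pert} encodes), but \eqref{eq:hall-mhd-lin} as printed carries a $+$ sign there, with which the two cross terms would add rather than cancel. So your argument is consistent with the correct linearization and with the target identity \eqref{eq:lin-en-hall}; the printed sign in \eqref{eq:hall-mhd-lin} is a typo, not a gap in your proof. Second, carrying out the computation you describe for the surviving term gives
\begin{equation*}
- \int_{M} b \cdot \nb \times ((\nb \times \bgB) \times b) \, \ud x \ud y \ud z = - \int_{M} ((b \cdot \nb)(\nb \times \bgB)_{j}) b^{j} \, \ud x \ud y \ud z,
\end{equation*}
i.e.\ the opposite sign to the right-hand sides of \eqref{eq:lin-en-hall}--\eqref{eq:lin-en-e} as printed (one can confirm this on $\bgB = f(y)\rd_{x}$, where the integral evaluates to $+\int f'' b^{y} b^{z}$ while the printed expression is $-\int f'' b^{y} b^{z}$). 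Since this term is not sign-definite and the proposition is only used through the absolute-value bound leading to the $L^{2}$ a priori estimates, the discrepancy is immaterial downstream; your hedge on the sign was warranted, and the rest of your argument is sound.
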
  

\subsubsection{Notion of an $L^2$-solution.}

The above energy identities stated above suggest that any ``reasonable'' solution to the linearized systems would enjoy good local-in-time $L^2$-bounds. This motivates the following definition for an $L^2$-solution: on an interval of time $I$, an $L^2$-solution means: \begin{itemize}
\item ({linearized \eqref{eq:hall-mhd} with $\nu > 0$}) a pair of vector fields $(u, b)$ such that $u \in C_{w} (I; L^{2}) \cap L^{2}_{t}(I; \dot{H}^{1})$ and $b \in C_{w}(I; L^{2})$ that satisfies \eqref{eq:hall-mhd-lin} in the sense of distributions;
\item ({linearized \eqref{eq:hall-mhd} with $\nu = 0$}) a pair of vector fields $(u, b) \in C_{w}(I; L^{2})$ that satisfies \eqref{eq:hall-mhd-lin} with $\nu = 0$ in the sense of distributions; or
\item ({linearized \eqref{eq:e-mhd}}) a vector field $b \in C_{w}(I; L^{2})$ that satisfies \eqref{eq:e-mhd-lin} in the sense of distributions.
\end{itemize} In the case $M = \bbT^{3}$, we assume that 
\begin{equation} \label{eq:mean-zero}
\int_{M} u(t) = \int_{M} b(t) = 0 \quad \hbox{ for all } t \in I,
\end{equation}
where we ignore the condition for $u$ in the case of \eqref{eq:e-mhd}. Given an initial data $(u_0,b_0) \in L^2$, it is not difficult to show \textit{existence} of an $L^2$-solution $(u,b)$ for \eqref{eq:hall-mhd-lin} defined in the sense above (similarly, an $L^2$-solution $b$ given $b_0 \in L^2$ for \eqref{eq:e-mhd-lin}), see \cite[Appendix A]{JO1}. We can further demand that the solution satisfies \begin{equation*}
\begin{split}
	\frac{1}{2}\left(\nrm{u(t)}_{L^2(M)}^2 + \nrm{b(t)}_{L^2(M)}^2\right) + \nu \nrm{u}_{L^2([0,t];\dot{H}^1)}^2 \le \frac{1}{2}\left(\nrm{u_0}_{L^2(M)}^2 +  \nrm{b_0}_{L^2(M)}^2 \right) e^{ Ct\nrm{\nabla\bgB}_{C^1(M)}}
\end{split}
\end{equation*} for all $t > 0$ and  \begin{equation*}
\begin{split}
	\frac{1}{2} \nrm{b(t)}_{L^2(M)}^2 \le \frac{1}{2}  \nrm{b_0}_{L^2(M)}^2   e^{ Ct\nrm{\nabla^2\bgB}_{L^\infty(M)}}
\end{split}
\end{equation*} for \eqref{eq:hall-mhd-lin} and \eqref{eq:e-mhd-lin}, respectively. These estimates are natural in view of Proposition \ref{prop:lin-en}. For the simplicity of presentation we shall assume that our $L^2$-solution satisfies these additional estimates.

\subsection{Illposedness results under $z$-independence} \label{subsec:linear-review} Let us review the linear illposedness results under the assumptions of $z$-independence. We shall take our domain to be periodic in $z$ and assume that all the functions are independent of $z$. In particular, $H^{m}$ and $\nb$ are identified with $H^{m}_{x,y}$ and $\nb_{x,y}$, respectively.

\subsubsection{Assumptions on the background magnetic fields.}\label{subsubsec:background} Two distinct classes of stationary magnetic fields were considered in \cite{JO1}, translation symmetric and axisymmetric ones. In the former, we take the domain $M_2 = (\mathbb{T},\mathbb{R})_x\times (\mathbb{T},\mathbb{R})_y $\footnote{The notation $(\bbT, \bbR)_{x}$ means that both $\bbT_{x}$ and $\bbR_{x}$ are allowed. } and set $\bgB = f(y)\rd_x.$ In the latter, we take $ M_2 = \mathbb{R}_{x,y} $ and $\bgB = f(r)\rd_\theta = f(\sqrt{x^2+y^2})(x\rd_y-y\rd_x).$ In both cases, it was assumed that $f$ and its derivatives are uniformly bounded and $f$ is \textit{linearly degenerate} in the sense that there exists $r_{0} \ne 0$ such that $f(r_{0}) = 0$, $f'(r_{0}) \ne 0$ (in the axisymmetric case). 

\subsubsection{Linear illposedness: Growth of $L^2$-solutions in higher Sobolev spaces.}
We are now in a position to recall the main linear result from \cite{JO1}: 
\begin{customthm}{A}[Sharp norm growth] \label{thm:norm-growth}
	Take the stationary, planar, and linearly degenerate magnetic field $\bgB$ on $M = M_2 \times \bbT_z$ as described in the above. Let $s \ge 0, p \ge 2$ satisfy $s + 1/p > 1/2$. Then the following statements hold.
	\begin{enumerate}
		\item Consider the linearized \eqref{eq:hall-mhd} with $\nu \ge 0$ around $\bgB$ on a time interval $I \ni 0$. For each $\lmb \in \bbN$, there exists an initial data set of the form
		\begin{itemize}
			\item (Case (a): translationally-symmetric background)
			\begin{equation*} 
			u_{0} = 0, \quad b_{0} = \Re (e^{i (\lmb x + \lmb \frkG(y))}) \frkb(x,y)  
			\end{equation*}
			where $G(y) \in C^{\infty}((\bbT, \bbR)_{y})$ and $\frkb(x, y) \in \calS((\bbT, \bbR)_{x} \times (\bbT, \bbR)_{y})$ with compact support in $y$ and either compact support in $x$ or real-analyticity in $x$; or
			\item (Case (b): axisymmetric background)
			\begin{equation*}
			u_{0} = 0, \quad b_{0} = \Re (e^{i (\lmb \tht + \lmb \frkG(r))}) \frkb(r)
			\end{equation*}
			where $G(r) \in C^{\infty}((0, \infty))$ and $\frkb(r) \in \calC^{\infty}((0, \infty))$ with compact support in $r$,
		\end{itemize}
		and $\dlt > 0$ depending only on $\bgB$ such that any corresponding $z$-independent $L^{2}$-solution $(u, b)$ exhibits norm growth of the form
		\begin{equation*}
		\nrm{b(t)}_{W^{s, p}(M)} \geq c_{s,p,\bgB} e^{c_{0}(\bgB) \cdot (s + \frac{1}{2} - \frac{1}{p})\lmb t} \nrm{b_0}_{W^{s,p}(M)}
		\end{equation*}
		for $t \in I$ satisfying $0 \leq t < \dlt$.
		
		\item Consider the linearized \eqref{eq:e-mhd} around the stationary solution $\bgB$ on a time interval $I \ni 0$. For each $\lmb \in \bbN$, there exists an initial data set of the form
		\begin{itemize}
			\item (Case (a): translationally-symmetric background)
			\begin{equation*}
			\quad b_{0} = \Re (e^{i (\lmb x + \lmb \frkG(y)}) \frkb
			\end{equation*}
			where $G(y)$ and $\frkb(x, y)$ are as in part~(1); or
			\item (Case (b): axisymmetric background)
			\begin{equation*}
			b_{0} = \Re (e^{i (\lmb \tht + \lmb \frkG(r))}) \frkb(r)
			\end{equation*}
			where $G(r)$ and $\frkb(r)$ are as in part~(1),
		\end{itemize}
		and $\dlt > 0$ depending only on $\bgB$ such that any corresponding $z$-independent $L^{2}$-solution $b$ on $I$ exhibits norm growth of the form
		\begin{equation*}
		\nrm{b(t)}_{W^{s, p}(M)} \geq c_{s,p,\bgB}  e^{c_{0}(\bgB) \cdot (s + \frac{1}{2} - \frac{1}{p})\lmb t}\nrm{b_0}_{W^{s,p}(M)}
		\end{equation*}
		for $t \in I$ satisfying $0 \leq t < \dlt  $.
	\end{enumerate}
\end{customthm}
The choice of profiles $\frkb$ and $\frkG$ is explicit and will be explained in the next section. Note that in the above statements, $z$-independence was assumed for the $L^2$-solution. This extra assumption can be removed as a simple consequence of the results from this paper. 

\subsubsection{Nonlinear illposedness results under $z$-independence} We now describe streamlined versions of main nonlinear illposedness results obtained in the work \cite{JO1}. There are two main statements. The first shows \textit{unboundedness} of the solution operator, assuming that the solution is well-defined. This in particular shows failure of the continuity of the solution map. The second result shows \textit{nonexistence} for the Cauchy problem, which may be considered as the strongest notion of illposedness. 

We shall need some additional notations to describe the results: let us denote a function-space ball of radius $\eps$ with respect to a norm $\nrm{\cdot}_{X}$ centered at $\bfx$ by
\begin{equation*}
\calB_{\eps}(\bfx; X) = \set{\bfy \in \bfx + X : \nrm{\bfy - \bfx}_{X} < \eps},
\end{equation*}
and its restriction to compactly supported functions by
\begin{equation*}
\calB_{\eps}(\bfx; X_{comp}) = \set{\bfy \in \calB_{\eps}(\bfx; X) : \bfy - \bfx \hbox{ has compact support in $M$}}.
\end{equation*}

\begin{customthm}{B}[Unboundedness of the solution map] \label{thm:illposed-strong}
	Let $M = (\bbT, \bbR)_{x} \times (\bbT, \bbR)_{y} \times \bbT_{z}$ and the stationary magnetic field $\bgB$ is given either by $f(y) \rd_{x}$ or $f(r)\rd_{\theta}$ as in Theorem \ref{thm:norm-growth}. Assume that for some $\eps, \dlt, r, s, s_{0} > 0$, the solution map for \eqref{eq:hall-mhd} (resp. \eqref{eq:e-mhd}) exists as a map 
	\begin{gather*}
	\calB_{\eps}((0, \bgB); H^{r}_{comp} \times H^{s}_{comp})  \to L^{\infty}_{t} ([0, \dlt]; H^{s_{0}-1}) \times L^{\infty}_{t} ([0, \dlt]; H^{s_{0}})  \\
	\left( \hbox{resp. } \calB_{\eps}(\bgB; H^{s}_{comp}) \to L^{\infty}_{t} ([0, \dlt]; H^{s_{0}}) \right).
	\end{gather*}Then this solution map is unbounded for  $s_0 \ge 3$. 
	
	Moreover, since $\bgB$ can be taken arbitrarily close to 0 in $H^s_{comp}$, the same statement holds with $\bgB \equiv 0$; that is, the solution map, if well-defined, is unbounded near the trivial solution for $s_0 \ge 3$. 
\end{customthm}   

We now state the nonexistence statement.  

\begin{customthm}{C}[Nonexistence near $0$]\label{thm:illposed-strong2}
	Let $s > 7/2$ and  {$M = (\bbT, \bbR)_{x} \times \bbR_{y} \times \bbT_{z}$ in the case of \eqref{eq:hall-mhd} and $M = (\bbT, \bbR)_{x} \times (\bbT, \bbR)_{y} \times \bbT_{z}$ for \eqref{eq:e-mhd}}. Given any $\epsilon > 0$, there exist initial data $(\mathbf{u}_0,\mathbf{B}_0) \in H^{s-1}_{comp} \times H^s_{comp}$ for  \eqref{eq:hall-mhd} satisfying $\nrm{\mathbf{u}_0}_{H^{s-1}} + \nrm{\mathbf{B}_0}_{H^s} < \epsilon$ (resp.~$\mathbf{B}_0 \in H^s_{comp} $ for \eqref{eq:e-mhd} satisfying $\nrm{\mathbf{B}_0}_{H^s} < \epsilon$) such that for any $\delta > 0$, there is no corresponding $ L^\infty_t([0,\delta];H^{s-1} \times H^s)$ solution to \eqref{eq:hall-mhd} (resp.~$L^\infty_t([0,\delta];H^s)$ solution to \eqref{eq:e-mhd}). \end{customthm}

\subsection{Time-dependent axisymmetric magnetic fields and linearized equations} \label{sec:background-new}
In this work, we focus on axisymmetric background magnetic fields (rather than translationally symmetric ones), as they are directly responsible for illposedness in $\bbR^{3}$. In the axisymmetric case, removing the assumption of  $z$-independence forces one to work with time-dependent magnetic fields. 


To this end, take an axisymmetric vector field of the form $\bfB = \Pi(r,z)\rd_\theta = r \Pi(r,z)e_\theta $. Then using that $\nabla\times (\Pi\rd_\theta) = -r\rd_z \Pi \rd_r + r^{-1}\rd_r(r^2\Pi)\rd_z$, we compute \begin{equation*} 
\begin{split}
(\bfB\cdot\nabla)(\nabla\times\bfB) & = -r\Pi\rd_z\Pi e_\theta =  -\Pi\rd_z \Pi \rd_\theta 
\end{split}
\end{equation*} and \begin{equation*} 
\begin{split}
((\nabla\times\bfB)\cdot\nabla)\bfB & = (- \rd_z \Pi \rd_r(r\Pi) + r^{-1}\rd_r(r^2\Pi)\rd_z\Pi)\rd_\theta = \Pi\rd_z \Pi \rd_\theta ,
\end{split}
\end{equation*} so that $\bfB(t) = \Pi(t,r,z)\rd_\theta$ satisfies \eqref{eq:e-mhd} if $\Pi$ solves the inviscid Burgers' equation: \begin{equation}\label{eq:e-mhd-axisym-background} 
\begin{split}
\rd_t \Pi - 2\Pi\rd_z\Pi = 0. 
\end{split}
\end{equation} Let us move on to the Hall-MHD case. Unlike the translationally symmetric case, now that $(\nabla\times\bfB)\times\bfB$ is not a gradient, we cannot keep $\bfu = 0$. However, it is still possible to propagate that velocity is axisymmetric and swirl-free, i.e. $\bfu \cdot e^\theta = 0$. In this case, it is convenient to introduce $\bfomg = \nabla\times\bfu =: \Omega(r,z) \rd_\theta$; we obtain that $(\Omega,\Pi)$ is a solution to Hall-MHD system if \begin{equation}\label{eq:hall-mhd-axisym-background}
\left\{
\begin{aligned}
& \rd_t\Omega + (V^r\rd_r + V^z\rd_z)\Omega + 2 \Pi\rd_z\Pi = \nu(\rd_r^2 + \frac{3}{r}\rd_r + \rd_z^2) \Omega  \\
& \rd_t\Pi + (V^r\rd_r + V^z\rd_z)\Pi - 2 \Pi\rd_z\Pi = 0. 
\end{aligned}
\right.
\end{equation} This system (as well as \eqref{eq:e-mhd-axisym-background}) seems to have appeared for the first time in \cite{CWe}, from where we have taken the notation $(\Omega,\Pi)$. It is to be supplemented with the following relation between $\Omega$ and $V^r,V^z$: \begin{equation}\label{eq:hall-mhd-axisym-BS} 
\left\{
\begin{aligned}
& \rd_z V^r - \rd_r V^z= r\Omega,  \\
& \rd_r V^r + \frac{1}{r} V^r + \rd_z V^z = 0 . 
\end{aligned}
\right.
\end{equation} As in \cite{CWe,Lei}, it will be convenient to introduce a stream function $\Phi$, such that \begin{equation*} 
\begin{split}
-(\rd_r^2 + \frac{3}{r}\rd_r + \rd_z^2)\Phi = \Omega, \quad V^r = -r\rd_z\Phi, \quad V^z = \frac{1}{r}\rd_r(r^2\Phi). 
\end{split}
\end{equation*} Existence, uniqueness, and regularity of $\Phi$ satisfying the above can be seen by observing that the second-order operator on the left hand side is simply $-\lap$ in $\bbR^5$ (see \cite{CWe}). With some abuse of notation, from now on we shall refer to a solution of \eqref{eq:e-mhd-axisym-background} by $\mathbf{\Pi} = \Pi \rd_\theta$. Similarly, given a solution $(\Omega,\Pi)$ of  \eqref{eq:hall-mhd-axisym-background}, the corresponding axisymmetric solution of \eqref{eq:hall-mhd} will be written as $(\bfV, \mathbf{\Pi})$, with $\bfV = V^r \rd_r  +  V^z \rd_z. $

\begin{remark}
	In the translationally symmetric case, a parallel magnetic field $\bfB = F(y,z)\rd_x$  defines a stationary solution to \eqref{eq:e-mhd}. Similarly, $(\bfu, \bfB) = (0, F\rd_x)$ defines a stationary solution to \eqref{eq:hall-mhd}. While one may repeat the analysis in the current paper to the case of $\bfB = F(y,z)\rd_x$, we omit this case entirely for the sake of brevity.  
\end{remark}

\subsubsection{Linearized equations around time-dependent backgrounds} \label{subsec:linearizations}

Given a stationary solution of the form $(0,\bgB)$ ($\bgB$, resp.) for \eqref{eq:hall-mhd} (\eqref{eq:e-mhd}, resp.), the corresponding linearization and energy identity were given in \eqref{eq:hall-mhd-lin} and \eqref{eq:lin-en-hall} (\eqref{eq:e-mhd-lin} and \eqref{eq:lin-en-e}, resp.). In the case of the time-dependent axisymmetric background $(\bfV, \bfPi)$   for \eqref{eq:hall-mhd}, the linearized systems are more complicated. With perturbations of the form $(\bfu,\bfB) = (\bfV + u, \bfPi + b)$, the system is given by 
\begin{equation}\label{eq:axisym-hall-pert}
\left\{
\begin{aligned}
& \rd_t b + \bfPi \cdot\nabla (\nabla\times b) - (\nabla\times b) \cdot \nabla \bfPi - (\nabla \times \bfPi) \cdot \nabla b + \bfV\cdot \nabla b - \bfPi\cdot\nabla u \\
& \quad  = -b\cdot\nabla(\nabla\times\bfPi) + b\cdot\nabla\bfV - u\cdot\nabla\bfPi  , \\
& \rd_t u + \bfV \cdot \nabla u + \nabla p - \nu\lap u - (\nabla\times b)\times \bfPi = -u\cdot\nabla\bfV + (\nabla\times\bfPi)\times b ,  \\
& \nabla \cdot b = \nabla \cdot u = 0.
\end{aligned}
\right.
\end{equation} We then obtain formally that \begin{equation}\label{eq:lin-en-hall-axi}
\begin{split}
& \frac{\ud}{\ud t} \left(\frac{1}{2} \int_{M} \abs{u}^{2}(t) + \abs{b}^{2}(t) \, \ud x \ud y \ud z \right) + \nu \int_{M} \abs{\nb u}^{2}(t) \, \ud x \ud y \ud z \\
& =  \int_{M} ((b \cdot \nb) \bfPi_{j}) u^{j} - ((u \cdot \nb) \bfPi_{j}) b^{j} \, \ud x \ud y \ud z 
+ \int_{M} ((b \cdot \nb) (\nb \times \bfPi)_{j}) b^{j} \, \ud x \ud y \ud z \\
& \phantom{=} + \int_{M} ((b\cdot\nabla)\bfV_j ) b^j  - ((u\cdot\nabla)\bfV_j ) u^j \, \ud x \ud y \ud z 
\end{split}
\end{equation} for a sufficiently smooth and decaying solution $(u,b)$ to \eqref{eq:axisym-hall-pert}. In the case of \eqref{eq:e-mhd}, the corresponding system is simply given by \eqref{eq:e-mhd-lin} where $\bgB$ is replaced by $\bfPi(t)$. Similarly, the energy identity \eqref{eq:e-mhd-lin} holds by replacing $\bfB$ with $\bfPi(t)$, for a sufficiently regular solution $b$. For completeness, we write \begin{equation} \label{eq:e-mhd-lin-axi}
\left\{
\begin{aligned}
& \rd_{t} b + \nb \times ((\nb \times b) \times \bfPi) + \nb \times ((\nb \times \bfPi) \times b) = 0, \\
& \nb \cdot b = 0.
\end{aligned}
\right.
\end{equation}   and 
	\begin{equation} \label{eq:lin-en-e-axi}
	\frac{\ud}{\ud t} \left(\frac{1}{2} \int_{M} \abs{b}^{2}(t) \, \ud x \ud y \ud z \right)
	=  \int_{M} ((b \cdot \nb) (\nb \times \bfPi)_{j}) b^{j} \, \ud x \ud y \ud z.
	\end{equation} 
We now naturally extend the notion of $L^2$-solution to this setting of time-dependent background: on an interval of time $I$, we demand that \begin{itemize}
	\item ({linearized \eqref{eq:hall-mhd} with $\nu > 0$}) a pair of vector fields $(u, b)$ such that $u \in C_{w} (I; L^{2}) \cap L^{2}_{t}(I; \dot{H}^{1})$ and $b \in C_{w}(I; L^{2})$ that satisfies \eqref{eq:axisym-hall-pert} in the sense of distributions;
	\item ({linearized \eqref{eq:hall-mhd} with $\nu = 0$}) a pair of vector fields $(u, b) \in C_{w}(I; L^{2})$ that satisfies \eqref{eq:axisym-hall-pert} with $\nu = 0$ in the sense of distributions; or
	\item ({linearized \eqref{eq:e-mhd}}) a vector field $b \in C_{w}(I; L^{2})$ that satisfies \eqref{eq:e-mhd-lin-axi} in the sense of distributions.
\end{itemize} Again, in the case $M = \bbT^{3}$, we assume \eqref{eq:mean-zero}. 
where we ignore the condition for $u$ in the case of \eqref{eq:e-mhd}. Given an initial data $(u_0,b_0) \in L^2$, it is not difficult to show {existence} of an $L^2$-solution $(u,b)$ for \eqref{eq:axisym-hall-pert} defined in the sense above (similarly, an $L^2$-solution $b$ given $b_0 \in L^2$ for \eqref{eq:e-mhd-lin-axi}), using an argument parallel to the one given in \cite[Appendix A]{JO1}. In view of the identities \eqref{eq:lin-en-hall-axi} and \eqref{eq:lin-en-e-axi}, it is reasonable to demand further that  
\begin{equation}\label{eq:apriori-hall-axi}
\begin{split}
&\frac{1}{2}\left(\nrm{u(t)}_{L^2(M)}^2 + \nrm{b(t)}_{L^2(M)}^2\right) + \nu \nrm{u}_{L^2([0,t];\dot{H}^1)}^2  \le \frac{1}{2}\left(\nrm{u_0}_{L^2(M)}^2 +  \nrm{b_0}_{L^2(M)}^2 \right) e^{ Ct(\nrm{\nabla\bfPi}_{L^\infty_I W^{1,\infty}} + \nrm{\nabla\bfV}_{L^\infty_I L^\infty})}
\end{split}
\end{equation} for all $t > 0$ and  \begin{equation}\label{eq:apriori-e-axi}
\begin{split}
\frac{1}{2} \nrm{b(t)}_{L^2(M)}^2 \le \frac{1}{2}  \nrm{b_0}_{L^2(M)}^2   e^{ Ct\nrm{\nabla^2\bfPi}_{L^\infty_IL^\infty(M)}}
\end{split}
\end{equation} for \eqref{eq:axisym-hall-pert} and \eqref{eq:e-mhd-lin-axi}, respectively. We shall assume these estimates for an $L^2$-solution for simplicity. 

\subsection{Main results}\label{subsec:main-results}

We are now ready to state the main results in this work, which are natural extensions of Theorems \ref{thm:norm-growth}--\ref{thm:illposed-strong2} to the case of $\bbR^3$. 

\subsubsection{Linear illposedness} Let $b_{0} = b_{0}^{(\lmb)}$ and $h_{0} = h_{0}^{(\lmb)}$ be sequences of functions parameterized by $\lmb$. For simplicity, let us write \begin{equation*} 
\begin{split}
&b_{0} = h_0( 1 + O(\lmb^{-1}))
\end{split}
\end{equation*} to denote that the support of $b_0$ is contained in that of $h_0$, and $\nrm{b_0 - h_0}_{W^{s,p}} \lesssim_{s,p} \lmb^{-1} \nrm{h_0}_{W^{s,p}}$ for any $s \ge 0$ and $1 <p<\infty$. 

\begin{customthm}{A*}[Norm growth for the linearized (E-MHD) and  (Hall-MHD)]\label{thm:norm-growth-prime}
	Take $M = \bbR^{3}$ and let $s \ge 0, p \ge 2$ satisfy $s + 1/p > 1/2$. Fix some smooth cutoff function $\chi(z)$ satisfying $\chi(z)=1$ for $|z|\le1$. 
	
	\begin{enumerate}
		\item  Case of (E-MHD): Take a time interval $I \ni 0$ and a time-dependent axisymmetric solution $\bfPi$ to \eqref{eq:e-mhd} defined on $I$, such that the initial data takes the form $\bfPi_0(r,z)= f(r)\rd_\tht$ on $|z|\le1$ where $f(r)$ satisfies the assumptions from \ref{subsubsec:background}. We consider the linearized \eqref{eq:e-mhd} \eqref{eq:e-mhd-lin-axi} on around $\bfPi$.

		Then, there exist $\dlt>0$ and a compactly supported smooth function $\chi$ depending only on $\bfPi_{0}$ such that with  $\frkG(r)$ and $\frkb(r)$ as in Theorem \ref{thm:norm-growth}, for $\lmb \in \bbN$, there exists an initial data set of the form \begin{equation} \label{eq:E-MHD-id}
			\quad b_{0}^{(\lmb)} = \Re (e^{i (\lmb \tht + \lmb \frkG (r))}) \frkb(r)\chi(z)  ( 1 + O(\lmb^{-1}))
		\end{equation} such that any corresponding $L^{2}$-solution $b^{(\lmb)}$ on $I$ exhibits norm growth of the form \begin{equation*}
			\nrm{b^{(\lmb)}(t)}_{W^{s, p}(M)} \geq c_{s,p,\bfPi_0}  e^{c_{0}(\bfPi_0) \cdot (s + \frac{1}{2} - \frac{1}{p})\lmb t}\nrm{b_{0}^{(\lmb)}}_{W^{s,p}(M)} \quad \mbox{for all} \quad t \in I \cap [0,\dlt]. 
		\end{equation*} 
		\item Case of (Hall-MHD): Take a time-dependent axisymmetric solution $(\bfV, \bfPi)$ of \eqref{eq:hall-mhd} defined on $I$, such that the initial data is of the form $(0,\bfPi_0)$ where $\bfPi_0$ is as in Case (1). Then, for each $\lmb \in \bbN$, any $L^2$-solution $(u^{(\lmb)},b^{(\lmb)})$ corresponding to the initial data $(0, b_{0}^{(\lmb)})$ ($b_{0}^{(\lmb)}$ as in Case (1)) satisfies \begin{equation*} 
			\begin{split}
				& \nrm{b^{(\lmb)}(t)}_{W^{s, p}(M)} \geq c_{s,p,\bfPi_0}  e^{c_{0}(\bgB) \cdot (s + \frac{1}{2} - \frac{1}{p})\lmb t}\nrm{b_{0}^{(\lmb)}}_{W^{s,p}(M)} \quad \mbox{for all} \quad t \in I \cap [0,\dlt_{1}],
			\end{split}
		\end{equation*} where $\dlt_{1} = c_1 \lmb^{-1}\ln\lmb $ for some $c_1 > 0$ depending only on $\bfPi_0$ and $\nu\ge0$. 
	\end{enumerate} 
\end{customthm}
 
\begin{remark}
	Unfortunately, in the Hall-MHD case, the growth of $b(t)$ is guaranteed only in the time interval of length $O(\lmb^{-1}\ln\lmb)$, see \ref{subsubsec:apriori-idea}. This is still sufficient to obtain unbounded growth as $\lmb\to\infty$. 
\end{remark}

\subsubsection{Nonlinear illposedness}

We now present our nonlinear illposedness results on $\bbR^{3}$. 

\begin{customthm}{B*}[Unboundedness of the solution map] \label{thm:illposed-strong-prime}
	Let $M = \bbR^{3}$ and the axisymmetric solution  $(\bfV,\bfPi)$ (resp. $\bfPi$) is given as in the case (2) (resp. case (1)) of Theorem \ref{thm:norm-growth-prime}.  Assume that for some $\eps, \dlt, s, r,  s_{0} > 0$, the solution map for \eqref{eq:hall-mhd} (resp. \eqref{eq:e-mhd}) exists as a map 
	\begin{gather*}
	\calB_{\eps}((0, \bfPi_0); H^{r}_{comp} \times H^{s}_{comp})  \to L^{\infty}_{t} ([0, \dlt]; H^{s_{0}-1}) \times L^{\infty}_{t} ([0, \dlt]; H^{s_{0}})  \\
	\left( \hbox{resp. } \calB_{\eps}(\bfPi_0; H^{s}_{comp}) \to L^{\infty}_{t} ([0, \dlt]; H^{s_{0}}) \right).
	\end{gather*} In the \eqref{eq:hall-mhd} case, assume further that $s<(1+\alp_0)s_{0}$ where $\alp_{0}>0$ is an absolute constant. Then this solution map is unbounded for  $s_0 > 7/2$, even if we restrict the initial data to be $C^{\infty}_{comp}$. Moreover,  the same statement holds with $\bfPi_0 \equiv 0$; that is, the solution map cannot be bounded near the trivial solution for $s_0 > 7/2$. 
\end{customthm}

\begin{remark}[Norm inflation]
	One may rephrase the above statement as follows: in the \eqref{eq:e-mhd} case, assuming that the solution operator is well-defined from $H^{s}$ to $L^\infty([0,\dlt];H^{s_{0}})$, for any $\varepsilon, M, \dlt> \dlt' > 0$, there exists an initial data $\bfB_0 \in C^\infty_{comp}(\bbR^3)$ with corresponding solution $\bfB(t)$ defined on $[0,\dlt]$ such that \begin{equation*}
		\begin{split}
			\nrm{\bfB_0}_{H^{s}}< \varepsilon, \quad \nrm{\bfB(\dlt')}_{H^{s_{0}}} > M. 
		\end{split}
	\end{equation*} Note that $s$ can be arbitrarily large independently of $s_{0}$, as long as $s_{0}>7/2$. 
\end{remark}


\begin{customthm}{C*}[Nonexistence]\label{thm:illposed-strong2-prime}
	Let $s > 7/2$ and $M = \bbR^{3}$. Given any $\epsilon > 0$, there exists an initial data $\mathbf{B}_0 \in H^s_{comp}$  satisfying  {$\nb \cdot \bfB_{0} = 0$ and} $\nrm{\mathbf{B}_0}_{H^s} < \epsilon$ such that for any $\dlt>0$, there is \emph{no} associated solution to \eqref{eq:e-mhd} belonging to $L^\infty_t([0,\delta];H^s)$. In the \eqref{eq:hall-mhd} case, there is $\mathbf{B}_{0} \in H^{s}$ satisfying  {$\nb \cdot \bfB_{0} = 0$ and} $\nrm{\mathbf{B}_0}_{H^s} < \epsilon$ such that for any $\dlt>0$, there is \emph{no} solution to \eqref{eq:hall-mhd} belonging to $ {L^\infty_t([0,\delta];H^{s-1} \times H^s)}$ corresponding to the initial data 
	$(\mathbf{u}_0 \equiv 0,\mathbf{B}_0)$. \end{customthm}

Some remarks about these theorems are in order.
\begin{remark}[Degenerate dispersion]
As in \cite{JO1}, the main illposedness mechanism exploited in this paper is \emph{degenerate dispersion}, i.e., the rapid growth of frequencies of solutions that travel towards a degeneracy of the principal term. This mechanism may be found in other classes of quasilinear dispersive equations as well; see \cite{CJO, JO4}.
\end{remark}

\begin{remark}[Wellposedness in the nondegenerate case]
In \cite{JO2-2}, the following local well(!)posedness theorem for \eqref{eq:e-mhd} on $\bbR^{3}$ was proved:
\begin{quote}
\it 
Let $s > \frac{7}{2}$ and consider a vector field $\bfB_{0} : \bbR^{3} \to \bbR^{3}$ satisfying $\nb \cdot \bfB_{0} = 0$. Assume  furthermore that $\bfB_{0}$ satisfies the following properties:
\begin{enumerate}
\item {\bf Nondegeneracy.} $\bfB_{0}(x) \neq 0$ at every point $x \in \bbR^{3}$,
\item {\bf Asymptotic uniformity.} $\nrm{\bfB_{0} - \bfe_{3}}_{\ell^{1}_{\calI} H^{s}} < + \infty$,
\item {\bf Nontrapping.} Every nonconstant solution $(X, \Xi)(t)$ to the Hamiltonian system associated with $\bfp_{\bfB_{0}}(x, \xi) = \bfB_{0}(x) \cdot \xi \abs{\xi}$ escapes to $x^{3} = \pm \infty$, i.e., $X^{3}(t) \to \infty$ or $X^{3}(t) \to - \infty$ as $t \to \infty$.
\end{enumerate}
Then the Cauchy problem for \eqref{eq:e-mhd} with $\bfB(t=0) = \bfB_{0}$ is locally wellposed. 
\end{quote}  
We remark that the techniques in \cite{JO2-2} can be used to prove a similar theorem for \eqref{eq:hall-mhd} as well; see \cite[Remark~1.3]{JO2-2}. The theorems in this paper complement this result by exhibiting a strong nonlinear illposedness phenomenon in a neighborhood of certain \emph{degenerate} initial data (e.g., $\bgB = 0$).

Observe that the above theorem from \cite{JO2-2} requires $\bfB_{0}$ to be asymptotic to the uniform magnetic field $\bfe_{3}$. Another natural setting where local wellposedness may hold is when $\bfB_{0}$ close to a \emph{nondegenerate} axisymmetric magnetic field. Such a result would justify the axisymmetric ansatz under a nondegeneracy assumption, which is interesting in view of the finite time blow-up result of Chae--Weng \cite{CWe} for the axisymmetric \eqref{eq:hall-mhd}. However, the proof in \cite{JO2-2} fails due to the difficulty of establishing local smoothing effect in this setting.
\end{remark}

\begin{remark}
	The nonlinear illposedness results can be similarly stated and proved in the scale of $C^{k,\alpha}$-spaces as long as $k + \alpha \ge 2$ for $\bfB$ in the \eqref{eq:e-mhd} case. In the \eqref{eq:hall-mhd} case, it is natural to put $(\bfu,\bfB) \in C^{k-1,\alpha}\times C^{k,\alpha}$ with $k+\alpha \ge 2$. 
\end{remark}

\subsection{Ideas of the proof} \label{subsec:ideas}

In this section, we discuss key ideas of the proof, mainly emphasizing the additional difficulties arising from having compact $z$-support. Main ideas in the $z$-independent case are given in \cite[Sections 1.6, 1.7]{JO1}.

\subsubsection{Analysis at the level of bicharacteristics.}\label{subsubsec:bicharacteristics} Here we give a heuristic explanation as to why the same illposedness results are expected in the $z$-dependent case as well. This is most easily seen under the (formal) framework of bicharacteristics. As argued in \cite{JO1}, our basic viewpoint towards \eqref{eq:hall-mhd} is to regard it as a quasilinear dispersive system, and the basic mechanism for the illposedness is given by the existence bicharacteristics quickly converging to the \textit{degeneracy} of the background magnetic field. Let us recall the notion of bicharacteristics: Linearizing around a stationary magnetic field $\bgB$ and neglecting the equation for the velocity, we may write the resulting equation in the form \begin{equation*}
\begin{split}
 \rd_tb + (\bgB\cdot\nabla)\nabla\times b = l.o.t.,
\end{split}
\end{equation*} where the right hand side contains either first or zeroth order terms in $b$. We may then diagonalize the matrix-valued symbol $\bfp = -(\bgB\cdot\xi)\xi\times$ on the subspace $\{ \zeta \in \bbR^3 : \xi\cdot\zeta = 0 \}$ with eigenvalues $\pm p = \pm i \bgB \cdot\xi |\xi|$.  Then neglecting the lower order terms, the above vector equation splits into two scalar dispersive PDE of the form \begin{equation*}
\begin{split}
\rd_t b_{\pm} \pm ip (i^{-1}\nabla)b_\pm = 0.
\end{split}
\end{equation*} Then the associated ODE system with the Hamiltonian vector field $(\nabla_\xi p, -\nabla_{\mathbf{x}} p)$ on $T^*M$ \begin{equation*}
\left\{\begin{aligned}
\dot{X} & = \nabla_\xi p(X,\Xi),\\
\dot{\Xi} & = -\nabla_{\mathbf{x}}  p(X,\Xi)
\end{aligned}
\right. 
\end{equation*} is referred to as the Hamiltonian ODE, and its solution $(X,\Xi)(t)$ a bicharacteristic. This is a natural generalization of the notion of the group velocity from constant coefficient dispersive PDEs. Given a Gaussian-like initial data with physical and frequency centers $(X_0,\Xi_0)$, the associated bicharacteristics $(X,\Xi)(t)$ describes the evolution of the (approximate) centers, at least for some period of time. 

We now take the concrete choice $\bgB = y\chi(z)\rd_x$ where $\chi(\cdot) \ge 0 $ is a smooth cutoff of $O(1)$ length scale. The associated Hamiltonian is $y(X)\chi(z(X))\Xi_x|\Xi|$ and since $\Xi_x$ is conserved along bicharacteristics (from $x$-invariance of $\bgB$), we conclude that $y(X)\chi(z(X))|\Xi|$ is conserved as well. Hence as in the case $\chi \equiv 1$, $y \rightarrow 0$ will imply $|\Xi| \rightarrow +\infty$. 

Towards this goal we first note that, as long as $|z(X)| \lesssim 1$, the Hamiltonian simply coincides with the one from the $z$-independent case. Take initial data of the form $X(0) = (0,1,z(0))$ ($x(0) = 0$ can be taken without loss of generality) and $\Xi(0) = (\lmb,-\lmb, \Xi_z(0))$ where $|z(0)|, |\Xi_z(0)| \lesssim 1$. As long as $|z(X)| \lesssim 1$, we have \begin{equation*}
\left\{\begin{aligned}
\dot{z} & = \lmb y \frac{\Xi_z}{|\Xi|}, \\
\dot{\Xi_z} & = 0 ,
\end{aligned}
\right. 
\end{equation*} and hence we see that if we have $|y(X)| \lesssim 1$ and $|\Xi_z|/|\Xi| \lesssim \lmb^{-1}$ for some interval of time, we can guarantee that $|z(X)| \lesssim 1 $  as well. But then this means that the behavior of the bicharacteristics is the same as in the $z$-independent case; in particular, \begin{equation*}
\begin{split}
|\Xi(t)| \simeq \lmb e^{\lmb t},\quad y(t) \simeq e^{-\lmb t}
\end{split}
\end{equation*} which in turn guarantees $|y(X)| \ll 1$ and $|\Xi_z|/|\Xi| \ll\lmb^{-1}$! (This circular argument can be made rigorous with a continuity argument.) This shows that the bicharacteristics from the $z$-independent case still provides a good approximation within a time interval which is \textit{independent} of $\lmb$, as long as the initial $z$-frequency is kept at $O(1)$. Essentially the same analysis can be repeated for the axisymmetric magnetic field of the form $f(r)\chi(z)\rd_\tht$ with the caveat that it actually does not define a stationary solution to \eqref{eq:hall-mhd}; see \ref{subsubsec:apriori-idea} below. This heuristic argument suggests that given a $z$-independent linear approximate solution, a simple cutoff in the $z$-direction still gives an approximate solution. Of course, this procedure gives rise to various technical difficulties, which shall be discussed in the following. 

\subsubsection{Generalized energy identities.}\label{subsubsec:gei} Our main technical tool in the illposedness arguments (both linear and nonlinear) is the so-called \textit{generalized energy identity}, which is a deceptively simple consequence of the (usual) energy identity. Let us briefly explain the ideas. Consider a linear equation of the form \begin{equation}\label{eq:linear-toy}
\begin{split}
\rd_t b + \calL[ b] = \calB [b] , 
\end{split}
\end{equation} where $\calL$ is antisymmetric and $\calB$ is a zeroth order operator; \begin{equation*}
\begin{split}
 \brk{\calL [f], g} = - \brk{f,\calL [g]}, \quad \nrm{\calB[f]}_{L^2} \lesssim \nrm{f}_{L^2} . 
\end{split}
\end{equation*} Then one can see that not only the $L^2$-norm of a solution $b$ is bounded in terms of the initial data, but also does the $L^2$-inner product of two solutions: for $b$ and $\tb$ satisfying \eqref{eq:linear-toy}, we have \begin{equation*}
\begin{split}
\frac{\ud}{\ud t} \brk{b,\tb} = \brk{\calB[b],\tb} - \brk{\calL[b],\tb} + \brk{b,\calB[\tb]} - \brk{b,\calL[\tb]} 
\end{split}
\end{equation*} so that after cancellations, \begin{equation}\label{eq:gee}
\begin{split}
\left| \frac{\ud}{\ud t} \brk{b,\tb} \right| \lesssim \nrm{b}_{L^2}\nrm{\tb}_{L^2} 
\end{split}
\end{equation} which implies (after integrating the above in time and then using that the $L^2$-norm is stable under \eqref{eq:linear-toy}) \begin{equation*}
\begin{split}
\left| \brk{b(t),\tb(t)} - \brk{b_0,\tb_0} \right| \lesssim t\nrm{b_0}_{L^2}\nrm{\tb_0}_{L^2}. 
\end{split}
\end{equation*} Finally, if the ``angle'' between $b_0$ and $\tb_0$ is small; that is, $\brk{b_0,\tb_0} \gtrsim \nrm{b_0}_{L^2}\nrm{\tb_0}_{L^2}$, we conclude that for some interval $I$ of time, \begin{equation}\label{eq:gee2}
\begin{split}
\inf_{t\in I} \, \brk{b(t),\tb(t)} \ge c\nrm{b_0}_{L^2}\nrm{\tb_0}_{L^2}
\end{split}
\end{equation} for some $c > 0$. In the following we shall frequently refer to \eqref{eq:gee} (or its simple consequence \eqref{eq:gee2}) as \textit{generalized energy estimate}. The merit in the generalized energy estimate lies in the flexibility involved in its derivation; say we cannot solve \eqref{eq:linear-toy} directly, but somehow are given an approximate solution $\tb$, satisfying instead \begin{equation*}
\begin{split}
\rd_t \tb + \calL[\tb] = \calB'[\tb]
\end{split}
\end{equation*} where $\calB'$ is some bounded operator in $L^2$ which does not need to be related in any way with $\calB$. But note that in this case, the estimates \eqref{eq:gee} and \eqref{eq:gee2} still hold under the same assumptions. Furthermore, one may observe that we actually do not need $\calB'$ and $\calB$ to be bounded operators in $L^2$, but just need that $\nrm{\calB^*[\tb]}_{L^2}, \nrm{\calB'[\tb]}_{L^2} \lesssim \nrm{\tb}_{L^2}$. In conclusion, as far as one is concerned with deriving the generalized energy estimate, one may regard \textit{a priori} and \textit{a  {posteriori}} bounded terms as errors. 

Having a generalized energy estimate in hand, the task of showing growth of $b$ in high-regularity Sobolev spaces reduces to that of showing \textit{decay} of an approximate solution $\tb$ in low-regularity Sobolev spaces. As a simple example, if we are able to show $\nrm{\tb(t)}_{L^1} \lesssim e^{-t}\nrm{\tb_0}_{L^2}$, we conclude at once from \eqref{eq:gee2} that $\nrm{b(t)}_{L^\infty} \gtrsim e^t \nrm{b_0}_{L^2}$. Of course in general showing degeneration in low regularity spaces could be a difficult task.

So far we have demonstrated that showing growth of a solution to \eqref{eq:linear-toy} can be reduces to construction and analysis of an appropriate approximate solution. There is a general such procedure, commonly called as WKB analysis, which we briefly illustrate in \ref{subsubsec:wkb}. Then we are in a position to discuss \textit{additional} difficulties in proving the generalized energy estimate in the $z$-dependent case, over the $z$-independent case.  

\subsubsection{WKB analysis.}\label{subsubsec:wkb}


Keeping the notation above, say we are interested in constructing an approximate solution $\tb$ for \eqref{eq:linear-toy}. The discussion in the above suggest that we can completely neglect the a priori bounded terms from the beginning and consider $z$-independent solutions; say the latter independence assumption simplifies the operator $\calL$ into $\calL'$; now we want to approximately solve \begin{equation}\label{eq:linear-toy2}
\begin{split}
\rd_t \tb + \calL' \tb = 0. 
\end{split}
\end{equation} We use the standard WKB analysis, which can be viewed as an expansion of the solution in frequency. Take $\lmb \gg 1$ and prepare an ansatz of the following form \begin{equation*}
\begin{split}
\tb = e^{i\lmb \Phi} (h^{(0)} + \lmb^{-1} h^{(1)} + \cdots ). 
\end{split}
\end{equation*} We have a freedom of choice for the initial data of $\Phi, h^{(0)}$, and so on, which we can use to our advantage. Formally plugging in the ansatz into \eqref{eq:linear-toy2} and organizing the terms by the powers of $\lmb$, one obtains a hierarchy of evolution equations; the first one involving only $\Phi$, the second describing the evolution of $h^{(0)}$ with coefficients depending on $\Phi$, and so on. While this inductively determines each function in the expansion, it turns out that in our concrete cases, $e^{i\lmb\Phi}h^{(0)}$ is already provides a good approximate solution, in the $L^2$-sense. 

\subsubsection{Bogovski\v{i} operator and broken antisymmetry.}\label{subsubsec:Bog}

As we have mentioned in the above, we would like to declare that $\chi \tb$ still provides a good approximation, where $\chi$ is a simple cutoff function in $z$, and $\tb$ is $z$-independent. An immediate problem one faces is that $\chi\tb$ cannot be divergence free, a property which is essential for the principal operator $\calL$ to be anti-symmetric to begin with. Moreover, the initial data $b_0$ should be divergence-free, close to $\chi\tb$ at least in the $L^2$-norm, and have compact support. It is tempting to just apply the Leray projector on $\chi\tb$ but in terms of the compact support property, a much better alternative is provided by the Bogovski\v{i} operator introduced in \cite{Bog}. This  is also used in an essential way  in the \eqref{eq:hall-mhd} case; $\chi\tu$ is not divergence-free, so to remove the pressure term, we need to test $u$ against the divergence-free part (defined by the Bogovski\v{i} operator) of $\chi\tu$. 

We still need to explain how the first difficulty is handled. The principal operator in the Hall current term is of second order and involves $\rd_{zx}$ and $\rd_{zy}$ derivatives. While going through the  generalized energy identity, there appear terms with $\rd_x$ and $\rd_y$ on $\tb$ when a $z$-derivatives falls on the cutoff (anti-symmetry is broken). However all such terms altogether cancel out to give rise an $L^2$-bounded term after some juggling of derivatives with the divergence-free condition on $b$. Of course this was anticipated from \ref{subsubsec:bicharacteristics}, or can be alternatively seen by repeating the WKB analysis in the $z$-dependent case. 

\subsubsection{A priori estimates for the axisymmetric Hall-MHD}\label{subsubsec:apriori-idea}

Utilizing the axisymmetric background magnetic fields (over the translationally symmetric ones) is the key to obtain illposedness for compact data in $\bbR^3$. However,  {compared to the setting of \cite{JO1}}, one immediately sees a problem: there are no non-trivial axisymmetric \emph{stationary} magnetic fields for \eqref{eq:e-mhd} with compact support in $\bbR^3$. This forces us to work with time-dependent background solutions, which sounds like a nonsense because in the end, we have proven \textit{nonexistence} of a solution to the initial value problem. Interestingly enough it is known that under the assumption of axisymmetry, existence and uniqueness of local-in-time smooth axisymmetric solutions can be established (\cite{CWe,JKL}).\footnote{This statement should be interpreted carefully, as uniqueness is not guaranteed for \eqref{eq:e-mhd} and \eqref{eq:hall-mhd}, so that there is no reason to believe that the solution is axisymmetric even if the initial data is so. This point is emphasized in \cite{JKL}.} We give a proof of a priori estimates both for the \eqref{eq:e-mhd} and \eqref{eq:hall-mhd} cases, although some cases have been already covered in \cite{JKL}. In the case of \eqref{eq:e-mhd}, the resulting equation for the axisymmetric magnetic field is exactly the (inviscid) Burgers equation in the $z$-direction, which is locally wellposed but definitely blows up in finite time for compact support initial data. This is not an issue for linear illposedness since the growth happens essentially instantaneously. Even though the magnetic field is time-dependent, all we need is that this background is constant in the $z$-direction for some intervals in $t$ and $z$, which holds for solutions to the Burgers equation. In view of this, serious difficulties arise in the \eqref{eq:hall-mhd} case. Even if the background velocity is initially zero, the velocity becomes immediately nonzero by the axisymmetric magnetic field for $t>0$, which in turn breaks $z$-invariance in any small interval of $z$. This is handled by quantifying smallness of the $z$-variation of the magnetic field for small $t>0$. A much more serious problem is that, in general, the velocity field will immediately move the point (a hyperplane in $\bbR^3$, strictly speaking) of degeneracy for the magnetic field. Recall that the illposedness mechanism is the convergence of wave packet solutions towards the degenerate point. The fact that the degenerate point is moving with time destroys the estimates for the wavepackets which were valid for an $O(1)$-interval of time; in the worst possible case, the estimates are only valid for a time interval of size $O(\lmb^{-1}\ln\lmb)$, where $\lmb \gg 1$ is the frequency of the wave packet at the initial time. Even showing this requires proving smallness-in-time estimates for various quantities involving the background axisymmetric solution. 

\begin{remark}
	It is desirable (and could be of independent interest) to have a situation where the degenerate point is forced to be fixed, even under the axisymmetric Hall-MHD evolution. This is the case for axis-degenerate magnetic fields, which are simply steady magnetic fields which are degenerate on the axis $\{ r = 0 \}$. One may perform the analysis of wave packets in this case, which gives admissible error estimates for $O(\lmb^{\alp})$-timescales where $\alp>0$ is a factor depending on the order of axis degeneracy. 
\end{remark}
 
\subsubsection{Nonlinear illposedness}

There is no general method for passing from a linear illposedness result to a corresponding nonlinear result; in certain cases a nonlinear system can be wellposed while it has a linearization which is illposed (\cite{GD}). In our case, we use the (generalized) energy estimate in a crucial way in deducing a nonlinear illposedness result from a linear one. Using a similar notation from the above, the equation for the perturbation takes the form \begin{equation*}
\begin{split}
\rd_t b + \calL[b] = \calB[b] + Q[b,b]
\end{split}
\end{equation*} where we write $Q[b,b]$ for the nonlinear terms. In practice, we have in $Q$ terms of the form $b\nabla^2b$ and $(\nabla b)^2$. Note that upon assuming towards contradiction that $b$ belongs to some $H^s$ with $s$ large, we may bound the contribution from $Q$ in deriving a generalized energy estimate for $b$ using Sobolev inequalities: \begin{equation*}
\begin{split}
\left|\brk{Q[b,b],\tb}\right| \lesssim \nrm{b}_{H^s} \nrm{b}_{L^2}\nrm{\tb}_{L^2} . 
\end{split}
\end{equation*} This allows us to again deduce  \eqref{eq:gee2}, which gives a contradiction to the boundedness of $b$ upon taking the initial frequency support of $b$ to be very large. This simple estimate makes precise the general belief that the nonlinear evolution follows approximately the linear one for a short time interval. In the three-dimensional case, one will see a slight difference in the minimum threshold for $s$ (compared to the $z$-independent case) since one needs to use three-dimensional embedding results rather than two-dimensional ones.  

The above gives an outline of how the unboundedness of the solution operator (assuming existence) can be proved. As we have done in \cite{JO1}, \textit{nonexistence} of the solution operator can be actually shown; the simple idea is to prepare a background solution with countably many points of degeneracy. Then for each degenerate point one puts a small (in some fixed $H^s$ with $s$ large enough) perturbation with increasing frequency. If the same argument as in the above can be repeated independently near each degenerate point, it will show that at some positive time moment (where the solution is assumed to \textit{exist}), the $H^s$-norm should be greater than any large number, which is a contradiction. 

Other than a few minor issues (related with ensuring the divergence-free conditions for some vector functions), there is not much additional difficulties in proving nonlinear illposedness in the $z$-dependent case over the $z$-independent case. On the contrary, having an unbounded direction $\bbR_z$ can be used to our advantage in the nonexistence argument, as we can place countably many points of degeneracy along the $z$-direction which was definitely not possible in our previous work.

\subsection{Notation, conventions and some useful vector calculus identities} \label{subsec:notation}
Here, we collect some notation, conventions and vector calculus identities that will be used freely in the remainder of the paper.

\subsubsection*{Notation and conventions}
By $A \aleq B$, we mean that there exists some positive constant $C > 0$ such that $\abs{A} \leq C B$. The dependency of the implicit constant $C$ is specified by subscripts, e.g. $A \aleq_{E} B$. By $A \aeq B$, we mean $A \aleq B$ and $A \ageq B$.

We denote by $\bbR$ the real line, $\bbZ$ the set of integers, $\bbT = \bbR /  {2 \pi \bbZ}$ the  {torus with length $2 \pi$}, $\bbN_{0} = \set{0, 1, 2, \ldots}$ the set of nonnegative integers and $\bbN = \set{1, 2, \ldots}$ the set of positive integers.

We write $M$ for the $3$-dimensional domain of the form $\bbT^{k} \times \bbR^{3-k}$ ($0 \leq k \leq 3$) equipped with the rectangular coordinates $(x, y, z)$, and $M^{2} = M^{2}_{x, y}$ for the two-dimensional projection of $M$ along the $z$-axis. We use the notation $\brk{u, v}$ and $\brk{u, v}_{M^2}$ for the standard $L^{2}$-inner product for vector fields on $M$ and $M^{2}$, respectively; i.e.,
\begin{equation*}
\brk{\bfu, \bfv} = \int_{M} \bfu \cdot \bfv \, \ud x \ud y \ud z, \quad \brk{u, v}_{M^2} = \int_{M^{2}} u \cdot v \, \ud x \ud y.
\end{equation*}
Given a vector $u$ on $M^{2}$, we define its perpendicular $u^{\perp}$ by $(-u^{y}, u^{x})^{\top}$ and the perpendicular gradient operator $\nb^{\perp}$ by $ (-\rd_{y},\rd_{x})$.
We use the usual notation $W^{s, p}$ for the $L^{p}$-based Sobolev space of regularity $s$; when $p = 2$, we write $H^{s} = W^{s, 2}$. The mixed Lebesgue norm $L^{p}_{x} L^{q}_{y}$ is defined as 
\begin{equation*}
\nrm{u}_{L^{p}_{x} L^{q}_{y}} = \nrm{\nrm{u(x, y)}_{L^{q}_{y}}}_{L^{p}_{x}}.
\end{equation*}
The norm $L^{p}_{t} H^{s}$ is defined similarly. 

Given any space $X$ of functions on $M$, we denote by $X_{comp}(M)$ the subspace of compactly supported elements of $X$, and by $X_{loc}(M)$ the space of functions $u$ such that $\chi u \in X_{loc}(M)$ for any smooth compactly supported function $\chi$ on $M$.

\subsubsection*{Vector calculus identities}
We recall some useful vector calculus identities:
\begin{align} 
\bfU \times (\bfV \times \bfW) & = \bfV (\bfU \cdot \bfW) - \bfW (\bfU \cdot \bfV) , \label{eq:cross-cross} \\
%
%
\nb \times (\bfU \times \bfV) 
& = (\bfV \cdot \nb) \bfU + \bfU (\nb \cdot \bfV) - (\bfU \cdot \nb) \bfV - \bfV (\nb \cdot \bfU) , \label{eq:curl-cross} \\
%
%
(\nb \times \bfU) \times \bfV 
& = (\bfV \cdot \nb) \bfU - \bfV_{j} \nb \bfU^{j} , \label{eq:cross-curl} \\
%
%
\nb \times (\nb \times \bfU) 
& = - \lap \bfU + \nb (\nb \cdot \bfU) . \label{eq:curl-curl}
\end{align}

\subsubsection*{Vector calculus in cylindrical coordinates}
{The cylindrical coordinates $(r, \theta, z)$ are defined by}
\begin{equation*}
r = \sqrt{x^{2} + y^{2}}, \quad \theta = \tan^{-1} \frac{y}{x}.
\end{equation*}
As in \cite{JO1}, we use the \emph{coordinate derivative basis} $(\rd_{r}, \rd_{\tht}, \rd_{z})$ to decompose vectors into components, i.e., given a vector $\bfU$ on $M$, we define its components $\bfU^{r}, \bfU^{\tht}, \bfU^{z}$ by
\begin{equation*}
\bfU = \bfU^{r} \rd_{r} + \bfU^{\tht} \rd_{\tht} + \bfU^{z} \rd_{z}.
\end{equation*} (Another common choice is $(e_{r}, e_{\tht}, e_{z}) = (\rd_{r}, r^{-1} \rd_{\tht}, \rd_{z})$.) With this convention, the inner product is given by 
\begin{equation} \label{eq:dot-cylin}
\bfU \cdot \bfV = \bfU^{r} \bfV^{r} + r^{2} \bfU^{\tht} \bfV^{\tht} + \bfU^{z} \bfV^{z},
\end{equation} where $\bfV = \bfV^{r} + \bfV^\tht \rd_\tht + \bfV^z \rd_z$. 
For a scalar-valued function $f$, the gradient and Laplacian in the cylindrical coordinates are \begin{equation} \label{eq:grad-cylin}
\nb f = (\rd_{r} f) \rd_{r} + (r^{-2} \rd_{\tht} f) \rd_{\tht} + (\rd_z f) \rd_z, 
\end{equation}
\begin{equation} \label{eq:Lap-cylin}
\begin{split}
\lap f = r^{-1}\rd_r(r \rd_rf) + r^{-2}\rd_{\tht\tht} f + \rd_{zz}f. 
\end{split}
\end{equation} When $f$ is independent of $z$, the perpendicular gradient takes the form
\begin{equation} \label{eq:grad-perp-cylin}
\nb^{\perp} f = - (r^{-1} \rd_{\tht} f) \rd_{r} + (r^{-1} \rd_{r} f) \rd_{\tht}.
\end{equation} Next, the curl and divergence in the cylindrical coordinates are
\begin{equation} 
\label{eq:curl-cylin}
\begin{aligned}
\nabla \times (\bfU^r{\rd_r}+\bfU^\theta {\rd_\theta} + \bfU^z{\rd_{z}}) =& (r^{-1}\rd_\theta \bfU^z - r \rd_{z} \bfU^{\tht}){\rd_r} + (r^{-1} \rd_{z} \bfU^{r} - r^{-1} \rd_r \bfU^z) {\rd_\theta} \\
& + r^{-1}(\rd_r(r^{2}\bfU^\theta) - \rd_\theta \bfU^r){\rd_{z}},
\end{aligned}\end{equation}
\begin{equation} \label{eq:div-cylin}
\nabla \cdot (\bfU^r{\rd_r}+\bfU^\theta {\rd_\theta} + \bfU^z{\rd_{z}}) = r^{-1}\rd_r(r\bfU^r) + \rd_\theta \bfU^\theta + \rd_{z} \bfU^{z}.
\end{equation} Finally, the material derivative is given by \begin{equation}\label{eq:mat-cylin}
\begin{split}
(\bfU\cdot\nabla)\bfV & = ( \bfU^r\rd_r\bfV^r + \bfU^\tht \rd_\tht \bfV^r + \bfU^z \rd_z \bfB^r - r\bfU^\tht \bfV^\tht ) \rd_r \\
& \qquad + ( r^{-1} \bfU^r \rd_r(r\bfV^\tht) + \bfU^\tht \rd_\tht \bfV^\tht + \bfU^z\rd_z\bfV^\tht + r^{-1}\bfU^\tht \bfV^r ) \rd_\tht \\
& \qquad + ( \bfU^r \rd_r\bfV^z + \bfU^\tht \rd_\tht \bfV^z + \bfU^z \rd_z\bfV^z ) \rd_z . 
\end{split}
\end{equation}


\org{The rest of this paper is organized as follows. In {\bf Section \ref{sec:prelim}}, we recall the Bogovski\v{i} operator and derive useful a priori estimates for axisymmetric solutions to \eqref{eq:e-mhd} and \eqref{eq:hall-mhd} respectively in {\bf Sections~\ref{subsec:bogov}} and {\bf \ref{subsec:apriori}}. Then the construction of degenerating wavepackets from \cite{JO1} is reviewed in {\bf Sections~\ref{subsec:reduction}--\ref{subsec:wkb}}. 
{\bf Section \ref{sec:linear}} is dedicated to the proofs of linear illposedness results. In {\bf Sections \ref{sec:nonlinear-unbounded}} and {\bf \ref{sec:nonlinear-nonexist}} we complete the proofs of unboundedness and nonexistence of the solution operator, respectively. }

\ackn{ I.-J. Jeong has been supported by the  National Research Foundation of Korea (NRF) grant No. 2022R1C1C1011051. S.-J. Oh was partially supported the Sloan Research Fellowship and the National Science Foundation CAREER Grant under NSF-DMS-1945615. 
}

\section{Preliminaries}\label{sec:prelim}

\subsection{Bogovski\v{i} operator}\label{subsec:bogov}

In later sections, it will be important to be able to systematically ``invert'' the divergence operator to ensure divergence-free condition on the initial perturbations. One may simply apply the Leray projector to a vector field, but then the compact support property will be immediately lost. To avoid this problem we  use the Bogovski\v{i} operator introduced in \cite{Bog}. Detailed information regarding the properties of this operator can be found in several textbooks; see for instance \cite{AcDu,Gal}. Given a bounded open set $U \subset \bbR^{3}$, we write $d(U) = \sup_{\mathbf{x},\mathbf{y} \in U}|\mathbf{x}-\mathbf{y}|$ and $i(U) = \sup_{\exists \mathbf{x} \in  {U}, B(\mathbf{x},R) \subset  {U} }R$. 

\begin{proposition}[{{\cite[Lemma 3.1]{Gal}}}]\label{prop:Bogov}
	Fix some smooth function $w \in C^\infty$ supported in the unit ball in $\bbR^3$ and satisfying $\int w = 1$, and let $U \subset \mathbb{R}^{3}$ be a bounded open set with Lipschitz boundary. Take $\mathbf{x}_0 \in U$ such that $B(\mathbf{x}_0,i(U)) \subset \overline{U}$  {and $\overline{U}$ is star-shaped with respect to $B(\mathbf{x}_0,i(U))$ (i.e., for every $\bfx \in \overline{U}$ and $\bfx' \in B(\mathbf{x}_0,i(U))$, the line segment from $\bfx$ to $\bfx'$ is contained in $\overline{U}$).} Rescale $w$ to define $w_{U}(\mathbf{x}) = i(U)^{-3} w(i(U)^{-1}(\mathbf{x}-\mathbf{x}_0))$. We define the kernel   \begin{equation}\label{eq:Bogov-kernel} 
	\begin{split}
	G_{U}(\mathbf{x},\mathbf{y})  = \int_0^1 \frac{\mathbf{x} - \mathbf{y}}{s} w\left(\mathbf{y}+\frac{\mathbf{x} - \mathbf{y}}{s}  \right) \frac{ds}{s^{3}}.
	\end{split}
	\end{equation} 
	Assume that $g \in L^2(\mathbb{R}^{3})$ is supported in $U$ and satisfies $\int_U g = 0$. Then, we have that $h$ defined by \begin{equation*} 
	\begin{split}
	&  h(\mathbf{x}) =  \int_{\mathbb{R}^{3}} G_U(\mathbf{x},\mathbf{y}) g(\mathbf{y}) \ud \mathbf{y}
	\end{split}
	\end{equation*} is supported in $U$ and solves $\mathrm{div} \, h = g$, with the estimates \begin{equation}\label{eq:Bogovskii-L2} 
	\begin{split}
	\nrm{h}_{L^2} \le \frac{C_n}{d( {U})} \left( \frac{d( {U})}{i( {U})} \right)^n \left(1 + \frac{d( {U})}{i( {U})} \right) \nrm{g}_{L^2}, \qquad \nrm{\nabla h}_{L^2} \le C \left( \frac{d( {U})}{i( {U})} \right)^{3} \left(1 + \frac{d( {U})}{i( {U})} \right) \nrm{g}_{L^2},
	\end{split}
	\end{equation} for a universal constant $C>0$.
\end{proposition}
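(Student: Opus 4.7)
This is the classical Bogovski\v{i} right-inverse of the divergence, proved in~\cite{Bog}; the version stated here is~\cite[Lemma~3.1]{Gal}. The plan is to verify three separate claims: the support property, the identity $\operatorname{div}(h) = g$, and the two quantitative $L^{2}$-type estimates.

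\textbf{Support and divergence identity.} For the support property: the integrand defining $G_{U}(\mathbf{x},\mathbf{y})$ is nonzero only if $\mathbf{z} := \mathbf{y} + (\mathbf{x}-\mathbf{y})/s \in B(\mathbf{x}_{0}, i(U))$ for some $s \in (0,1]$. Inverting the defining relation gives $\mathbf{x} = s\mathbf{z} + (1-s)\mathbf{y}$, so $\mathbf{x}$ is a convex combination of $\mathbf{z} \in B(\mathbf{x}_{0},i(U)) \subset \overline{U}$ and $\mathbf{y} \in \mathrm{supp}(g) \subseteq U$. The star-shaped hypothesis on $\overline{U}$ then forces $\mathbf{x} \in \overline{U}$, giving $\mathrm{supp}(h) \subseteq \overline{U}$. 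For the divergence identity, I would work directly in the sense of distributions: pair with an arbitrary $\phi \in C_{c}^{\infty}(\bbR^{3})$ in $-\langle \nabla\phi, h\rangle$ and perform the substitution $\mathbf{z} = \mathbf{y} + (\mathbf{x}-\mathbf{y})/s$ at fixed $\mathbf{y}, s$ (with Jacobian $s^{3}$). The $\mathbf{x}$-integrand collapses to $\tfrac{d}{ds}\phi(\mathbf{y}+s(\mathbf{z}-\mathbf{y}))\, w_{U}(\mathbf{z})\, g(\mathbf{y})$, and the $s$-integration on $(0,1]$ produces the boundary contribution $\bigl[\phi(\mathbf{z}) - \phi(\mathbf{y})\bigr] w_{U}(\mathbf{z}) g(\mathbf{y})$. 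Using $\int w_{U} = 1$ and $\int g = 0$, the $\phi(\mathbf{z})$ piece vanishes while the $\phi(\mathbf{y})$ piece yields exactly $\int g \phi$, proving $\operatorname{div}(h) = g$.

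\textbf{The $L^{2}$ and gradient bounds.} The substitution $r = |\mathbf{x}-\mathbf{y}|/s$ reparameterizes $G_{U}(\mathbf{x},\mathbf{y})$ as a line integral along the ray from $\mathbf{y}$ through $\mathbf{x}$, yielding the pointwise estimate
\begin{equation*}
|G_{U}(\mathbf{x},\mathbf{y})| \lesssim \frac{1}{|\mathbf{x}-\mathbf{y}|^{2}} \int_{|\mathbf{x}-\mathbf{y}|}^{\infty} r^{2}\, |w_{U}(\mathbf{y}+r\hat{\mathbf{n}})| \, \ud r \lesssim \bigl(d(U)/i(U)\bigr)^{2} |\mathbf{x}-\mathbf{y}|^{-2},
\end{equation*}
where $\hat{\mathbf{n}} = (\mathbf{x}-\mathbf{y})/|\mathbf{x}-\mathbf{y}|$; this uses $\|w_{U}\|_{L^{\infty}} \lesssim i(U)^{-3}$, a chord length of at most $2 i(U)$ on $B(\mathbf{x}_{0}, i(U))$, and $r \lesssim d(U)$ on the effective support. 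Combined with $\int_{U} |\mathbf{x}-\mathbf{y}|^{-2} \, \ud\mathbf{y} \lesssim d(U)$ in three dimensions, Schur's test controls $\|h\|_{L^{2}}$ in terms of $\|g\|_{L^{2}}$, the exact powers being read off by rescaling to a unit-scale configuration. For the gradient estimate, differentiating $G_{U}$ in $\mathbf{x}$ and isolating the leading behavior near the diagonal exhibits $\nabla_{\mathbf{x}} G_{U}$ as a Calder\'on--Zygmund singular kernel (homogeneous of degree $-3$, with the required spherical cancellation coming from $\int w_{U} = 1$) plus Schur-bounded remainders; standard CZ theory then delivers $L^{2}$-boundedness of the principal part with the stated dependence on $d(U)/i(U)$.

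\textbf{Expected main obstacle.} The analytically most delicate step is the Calder\'on--Zygmund estimate on $\nabla h$: one must rigorously verify that $\nabla_{\mathbf{x}} G_{U}$ does decompose into a genuine CZ kernel with the correct cancellation structure plus integrable remainders, and carefully track how the CZ constants depend on the shape ratio $d(U)/i(U)$ under rescaling. The support property, by contrast, is a one-line geometric consequence of star-shapedness, and the divergence identity reduces to a single well-chosen change of variables paired with the two normalizations $\int w_{U} = 1$ and $\int g = 0$.
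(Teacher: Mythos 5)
The paper does not prove this proposition at all: it is quoted verbatim from Galdi's book (Lemma~III.3.1 there), so there is no in-paper argument to compare against. Your sketch reconstructs the standard Bogovski\v{i} argument as presented in that reference, and the parts you carry out in detail are correct: the support property is exactly the convex-combination/star-shapedness observation you give (note you silently and correctly read the $w$ in \eqref{eq:Bogov-kernel} as $w_{U}$); the distributional identity $\divr h = g$ follows from the change of variables $\mathbf{x} = s\mathbf{z} + (1-s)\mathbf{y}$, the fundamental theorem of calculus in $s$, and the two normalizations $\int w_{U} = 1$ (killing the $\phi(\mathbf{y})$ boundary term up to the desired $\int g\phi$) and $\int g = 0$ (killing the $\mathbf{y}$-independent $\phi(\mathbf{z})$ term); and the reparametrization $r = |\mathbf{x}-\mathbf{y}|/s$ does yield the pointwise bound $|G_U| \lesssim (d(U)/i(U))^{2}|\mathbf{x}-\mathbf{y}|^{-2}$, which with $\int_{\overline{U}}|\mathbf{x}-\mathbf{y}|^{-2}\,\ud\mathbf{y} \lesssim d(U)$ and Schur's test gives an $L^{2}$ bound of the stated shape (your version $d(U)(d(U)/i(U))^{2}$ is in fact the dimensionally consistent one; the prefactor $C_n/d(U)$ in \eqref{eq:Bogovskii-L2} appears to be a transcription slip in the paper).

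The one genuinely incomplete step is the gradient estimate. Asserting that $\nb_{\mathbf{x}} G_{U}$ is "a CZ kernel plus Schur-bounded remainders" is the right picture but hides the actual work: differentiating under the integral produces, after the $r$-substitution, a principal part homogeneous of degree $-3$ whose mean-value-zero property on spheres must be extracted from the structure of the kernel (not merely from $\int w_{U}=1$), \emph{plus a local term of the form $c(\mathbf{x})g(\mathbf{x})$ with $c$ bounded} --- this last term is a multiplication operator, not a Schur-bounded integral operator, so your description of the remainders is slightly off even though the conclusion ($L^{2}$-boundedness) is unaffected. Tracking the dependence of the Calder\'on--Zygmund constant on $d(U)/i(U)$ also requires the explicit kernel bounds rather than a soft appeal to CZ theory. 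You flag this as the main obstacle, which is accurate; for the purposes of this paper one should simply defer to \cite[Lemma~III.3.1]{Gal}, as the authors do, rather than reprove it.
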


For our applications, we shall take $\bfx_{0}$ to lie on the $z$-axis and $U$ to be an (axisymmetric) cylinder of the form $U = \set{(x, y, z) : \abs{(x, y)} < \ell, \, \abs{z} < \ell}$ for some $\ell > 0$. Abusing the terminology, we will refer to such cylinder $U$ as a \emph{cube} with side-length $\ell$. Choosing $i(U) = c \ell$ for some universal small constant $c > 0$, we simply have  \begin{equation}\label{eq:Bogovskii-L2-3}
		\begin{split}
			\nrm{\nabla^{m+1} h}_{L^2} \le C_{m} \nrm{\nabla^m g}_{L^2}
		\end{split}
	\end{equation} for all $m \ge 0$.  {Indeed, observe that $d(U)$ is comparable to $\ell$ (and hence to $i(U)$, and that the star-shape condition follows from the convexity of $U$.} 
	
	Moreover, we shall replace the kernel $G_{U}$ in \eqref{eq:Bogov-kernel} by \begin{equation*}
	\begin{split}
		\mathring{G}_{U}(r,\tht,z,r',\tht',z') := \int_{0}^{2\pi}  G_{U}(r,\tht + \tht_{0},z, r', \tht' + \tht_{0}, z')  \, \ud \tht_{0} 
	\end{split}
\end{equation*} and define
\begin{equation*}
	\mathrm{div}^{-1}_{U} g(\bfx) := \int_{\bbR^{3}} \mathring{G}_{U}(\bfx, \bfy) g(\bfy) \, \ud \bfy.
\end{equation*}
With this choice, we keep the right-inverse and support-preserving properties of the operator in Proposition~\ref{prop:Bogov}, and have the additional convenient property that $[\rd_{\tht}, \mathrm{div}^{-1}_{U}	] = 0$.
In what follows, if the choice of $U$ is clear from the context, we will simply write $\mathrm{div}^{-1}$.

\subsection{A priori estimates for axisymmetric solutions}\label{subsec:apriori}

The purpose of subsection is to derive a few elementary a priori estimates for axisymmetric solutions to \eqref{eq:e-mhd} and \eqref{eq:hall-mhd}, which will serve as background solutions in our proof of instability/illposedness. 

We begin by recalling that if $\bfU$ be of the form $\bfU = U(r, z) \rd_{\tht}$, then $(-\lap) \bfU =   - \left(\rd_{r}^{2} + \frac{3}{r} \rd_{r} + \rd_{z}^{2}\right) U  \rd_{\tht}$. We shall use the notation 
\begin{equation*}
	-\slashed{\lap} U
	= - \left(\rd_{r}^{2} + \frac{3}{r} \rd_{r} + \rd_{z}^{2}\right) U.
\end{equation*}

In what follows, we write $\bfPi$ instead $\bfB$ for an axisymmetric solution to \eqref{eq:e-mhd}. The system \eqref{eq:e-mhd}  for solutions of the form $\bfPi = \Pi(r, z) \rd_{\tht}$ may be rewritten as
\begin{equation} \label{eq:e-mhd-fradiss-axi}
	\rd_{t} \Pi - 2 \Pi \rd_{z} \Pi = 0 .
\end{equation}
For an axisymmetric solution to \eqref{eq:hall-mhd}, we write $(\bfV, \bfPi)$ instead $(\bfu, \bfB)$. The system \eqref{eq:hall-mhd}  for solutions of the form $(\bfV, \bfPi) = (V^{r}(r, z) \rd_{r} + V^{z}(r, z) \rd_{z}, \Pi(r, z) \rd_{\tht})$ may be rewritten as
\begin{equation} \label{eq:hall-mhd-fradiss-axi}
\left\{
\begin{aligned}
	\rd_{t} \Omg + (V^{r} \rd_{r} + V^{z} \rd_{z}) \Omg + 2 \Pi \rd_{z} \Pi &= \nu \slashed{\lap} \Omg, \\
	\rd_{t} \Pi + (V^{r} \rd_{r} + V^{z} \rd_{z}) \Pi - 2 \Pi \rd_{z} \Pi &= 0 , \\
	\rd_{r} V^{r} + \rd_{z} V^{z} &= 0,
\end{aligned}
\right.
\end{equation}
where $\nb \times \bfV = \Omg(r, z) \rd_{\tht}$. Conversely, under a mild decay assumption at spatial infinity (which will always be satisfied in our application), $\bfV$ may be recovered from $\Omg$ by introducing the stream function $\Phi(r, z)$ that solves $- \slashed{\lap} \Phi = \Omg,$ 
with respect to which we have 
\begin{equation*}
V^{r} = - r \rd_{z} \Phi, \quad V^{z} = \frac{1}{r} \rd_{r} (r^{2} \Phi).
\end{equation*}

\subsubsection{$H^{m}$-estimates and local wellposedness}
Unlike the general non-axisymmetric case, \eqref{eq:e-mhd-fradiss-axi} and \eqref{eq:hall-mhd-fradiss-axi} are locally wellposed in (sufficiently high regularity) Sobolev spaces. Here we derive $H^{m}$ a priori estimates and corresponding local wellposedness statements for \eqref{eq:e-mhd-fradiss-axi} and \eqref{eq:hall-mhd-fradiss-axi}.

We start with a lemma concerning the relationship between the standard $H^{m}$-norm for a vector field formulated with respect to the standard basis $(e_{x}, e_{y}, e_{z})$ and those for components with respect to $(\rd_{r}, \rd_{\tht}, \rd_{z})$ in the axisymmetric case.
\begin{lemma} \label{lem:axisymm-sob}
Let $\bfU, \bfV \in \calS(\bbR^{3})$ be of the form $\bfU = U^{\tht} (r, z) \rd_{\tht}$ and $\bfV = V^{r}(r, z) \rd_{r} + V^{z}(r, z) \rd_{z}$. 
\begin{enumerate}
\item For $U^{\tht}$ and $r^{-1} V^{r}$, we have the formulae
\begin{align} 
	U^{\tht}(r, z) &= \frac{1}{r^{2}} \int_{0}^{r} (- \rd_{y} \bfU^{x} + \rd_{x} \bfU^{y})(r', z) r' \, \ud r', \label{eq:axisymm-Utht-rep} \\
	\frac{1}{r} V^{r}(r, z) &= \frac{1}{r^{2}} \int_{0}^{r} (\rd_{x} \bfV^{x} + \rd_{y} \bfV^{y})(r', z) r' \, \ud r'. \label{eq:axisymm-Vr-rep}
\end{align}

\item For any nonnegative integer $m$, we have
\begin{align} 
\nrm{\bfU}_{H^{m}}^{2} 
&\aeq_{m} \nrm{r U^{\tht}}_{H^{m}(r \ud r \ud z)}^{2} + \nrm{U^{\tht}}_{H^{m-1}(r \ud r \ud z)}^{2}, \label{eq:axisymm-sob-Utht} \\
\nrm{\bfV}_{H^{m}}^{2} 
&\aeq_{m} \nrm{V^{r}}_{H^{m}(r \ud r \ud z)} + \nrm{r^{-1} V^{r}}_{H^{m-1}(r \ud r \ud z)}^{2} + \nrm{V^{z}}_{H^{m}(r \ud r \ud z)}^{2}, \label{eq:axisymm-sob-Vrz}
\end{align}
where in the case $m = 0$, the terms involving $H^{m-1}$ are ignored. Moreover, we have
\begin{align} 
	\nrm{\nb (U^{\tht} \rd_{\tht})}_{L^{2}}^{2}
	&\aeq \nrm{r \rd_{r} U^{\tht}}_{L^{2}(r \ud r \ud z)}^{2} + \nrm{r \rd_{z} U^{\tht}}_{L^{2}(r \ud r \ud z)}^{2} + \nrm{U^{\tht}}_{L^{2}(r \ud r \ud z)}^{2}, \label{eq:axisymm-sob-hardy-Utht} \\
	\nrm{\nb (V^{r} \rd_{r})}_{L^{2}}^{2}
	&\aeq \nrm{\rd_{r} V^{r}}_{L^{2}(r \ud r \ud z)}^{2} + \nrm{\rd_{z} V^{r}}_{L^{2}(r \ud r \ud z)}^{2} + \nrm{r^{-1} V^{r}}_{L^{2}(r \ud r \ud z)}^{2}. \label{eq:axisymm-sob-hardy-Vrz}
\end{align}

\end{enumerate}
\end{lemma}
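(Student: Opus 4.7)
The plan is to prove Part (1) by direct computation, then bootstrap Part (2) from pointwise Cartesian representations of $\bfU$ and $\bfV$. Using $\rd_{\tht} = -y\rd_x + x\rd_y$ and $\rd_r = (x/r)\rd_x + (y/r)\rd_y$, one obtains the Cartesian components $\bfU^x = -yU^\tht$, $\bfU^y = xU^\tht$, $\bfV^x = (x/r)V^r$, $\bfV^y = (y/r)V^r$. From these and the chain rule applied to $U^\tht(r,z)$ and $V^r(r,z)$, direct computation yields
\[
-\rd_y\bfU^x + \rd_x\bfU^y = 2U^\tht + r\rd_r U^\tht = \tfrac{1}{r}\rd_r(r^2 U^\tht), \qquad \rd_x\bfV^x + \rd_y\bfV^y = \tfrac{V^r}{r} + \rd_r V^r = \tfrac{1}{r}\rd_r(rV^r).
\]
Multiplying each identity by $r'$ and integrating from $r'=0$ to $r'=r$, then dividing by $r^2$, gives \eqref{eq:axisymm-Utht-rep} and \eqref{eq:axisymm-Vr-rep}; the boundary term at $r'=0$ vanishes since $\bfU,\bfV\in\calS$ forces $r^2 U^\tht$ and $rV^r$ to vanish there.

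For Part (2), I would first dispatch the base cases by direct pointwise computation. From $|\bfU|^2 = (\bfU^x)^2 + (\bfU^y)^2 = r^2(U^\tht)^2$, the change of variables to cylindrical coordinates yields $\nrm{\bfU}_{L^2}^2 \aeq \nrm{rU^\tht}_{L^2(r\,\ud r\,\ud z)}^2$, which is the $m=0$ case of \eqref{eq:axisymm-sob-Utht}; similarly $|\bfV|^2 = (V^r)^2 + (V^z)^2$ gives the $m=0$ case of \eqref{eq:axisymm-sob-Vrz}. For \eqref{eq:axisymm-sob-hardy-Utht}, expanding all Cartesian partial derivatives of $\bfU^x,\bfU^y$ and collecting (using $(x^2+y^2)^2 = r^4$ repeatedly) produces the pointwise identity
\[
|\nb\bfU|^2 = (\rd_r(rU^\tht))^2 + (U^\tht)^2 + r^2(\rd_z U^\tht)^2,
\]
from which the equivalence follows via $\rd_r(rU^\tht) = U^\tht + r\rd_r U^\tht$ together with $(a+b)^2 \le 2a^2 + 2b^2$. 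A parallel calculation gives the exact identity $|\nb\bfV|^2 = (V^r/r)^2 + (\rd_r V^r)^2 + (\rd_z V^r)^2$, yielding \eqref{eq:axisymm-sob-hardy-Vrz}.

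For the general $H^m$ equivalences with $m\ge 1$, I would proceed by induction on $m$. For the upper bound, iterated Leibniz differentiation of $\bfU^x = -yU^\tht$ and $\bfU^y = xU^\tht$ in Cartesian variables produces finite sums of the form (bounded function of $x/r,y/r$) times $r^{-j}$ times $\rd_r^{\alp} \rd_z^{\bt} U^\tht$ with $\alp+\bt+j \le m$, multiplied by an overall factor of $x$ or $y$; integrating and grouping the extra $r$ appropriately yields control by $\nrm{rU^\tht}_{H^m(r\,\ud r\,\ud z)}^2 + \nrm{U^\tht}_{H^{m-1}(r\,\ud r\,\ud z)}^2$. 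For the lower bound, the representation formula from Part (1) expresses $U^\tht$ (resp.\ $r^{-1}V^r$) as a weighted radial average of first Cartesian derivatives of $\bfU$ (resp.\ $\bfV$); differentiating under the integral sign in $r,z$ and invoking a Hardy inequality of the form $\nrm{r^{-1}f}_{L^2(r\,\ud r)} \aleq \nrm{\rd_r f}_{L^2(r\,\ud r)}$ (valid for $f$ vanishing at $r=0$) recovers the claimed higher-derivative norms from the Cartesian $H^m$-data.

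The main obstacle is the careful handling of the $r=0$ axis: the $1/r$ factors arising from Cartesian differentiation of functions of $r = \sqrt{x^2+y^2}$ obstruct naive pointwise bounds, and the extra summand $\nrm{U^\tht}_{H^{m-1}(r\,\ud r\,\ud z)}^2$ (resp.\ $\nrm{r^{-1}V^r}_{H^{m-1}(r\,\ud r\,\ud z)}^2$) in the claimed equivalence encodes precisely the quantitative price paid for each Hardy exchange needed to invert the averaging formula from Part (1). Tracking these losses consistently through the induction is the main bookkeeping task.
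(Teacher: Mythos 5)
Your Part (1) and the exact pointwise identities for $\abs{\nb \bfU}^{2}$ and $\abs{\nb \bfV}^{2}$ are correct, and they give a cleaner route to \eqref{eq:axisymm-sob-hardy-Utht}--\eqref{eq:axisymm-sob-hardy-Vrz} than the paper, which derives those from a Hardy inequality. Your architecture for the hard direction ($\ageq_{m}$) of \eqref{eq:axisymm-sob-Utht}--\eqref{eq:axisymm-sob-Vrz} --- invert the radial averaging operator coming from the representation formula --- is also the paper's, where it is phrased as inverting $S+2$ for the scaling field $S = r\rd_{r}$. The gap is the specific tool you invoke to do the inversion: the inequality $\nrm{r^{-1}f}_{L^{2}(r\,\ud r)} \aleq \nrm{\rd_{r} f}_{L^{2}(r \, \ud r)}$ is the two-dimensional endpoint of Hardy's inequality and is \emph{false}, even for $f$ vanishing to infinite order at $r=0$: a smoothed version of $f_{\eps}(r) = \min(r/\eps, 1)$, cut off near $r = 2$, has $\nrm{r^{-1} f_{\eps}}_{L^{2}(r \ud r)}^{2} \gtrsim \log(1/\eps)$ while $\nrm{\rd_{r} f_{\eps}}_{L^{2}(r\ud r)}^{2} \lesssim 1$, so no uniform constant exists. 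Worse, some individual terms produced by differentiating $r^{-2}\int_{0}^{r} g \, r' \ud r'$ directly (e.g.\ $r^{-1} g$ with $g = -\rd_{y}\bfU^{x} + \rd_{x}\bfU^{y}$, which does not vanish on the axis) are not even finite in $L^{2}(r\ud r)$, so the estimate cannot be run term by term and then repaired by Hardy.

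The repair is to keep the shift in the Hardy inequality strictly above the forbidden value. The paper proves $\nrm{(r\rd_{r} + \ell + 1) u}_{L^{2}(r\ud r)} \geq \ell \nrm{u}_{L^{2}(r \ud r)}$ for $\ell \geq 1$ and, crucially, commutes $\ell - 1$ Cartesian derivatives through $(S+2)U^{\tht} = -\rd_{y}\bfU^{x} + \rd_{x}\bfU^{y}$ \emph{before} inverting, so that the shift only increases ($2 \mapsto \ell + 1$). Equivalently, in your language: rescale the representation formula to $U^{\tht}(r, z) = \int_{0}^{1} g(rs, z)\, s \, \ud s$, differentiate under the integral (each $\rd_{r}$ produces an extra harmless factor of $s$), and apply Minkowski's integral inequality together with the scaling identity $\nrm{h(s\,\cdot)}_{L^{2}(r\ud r)} = s^{-1}\nrm{h}_{L^{2}(r\ud r)}$; the resulting integral $\int_{0}^{1} s^{a+1}\cdot s^{-1}\, \ud s$ converges precisely because the weight $r'\,\ud r'$ in \eqref{eq:axisymm-Utht-rep} keeps you one power away from the $L^{2}(r\ud r)$-endpoint. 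With this replacement (and the analogue for $r^{-1}V^{r}$), together with the observation that $\rd_{r}^{a}\rd_{z}^{c}$ of an axisymmetric scalar is controlled pointwise by its Cartesian derivatives of the same order, your outline goes through.
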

\begin{remark}
For $V^{z}$, by Hardy's inequality on $\bbR^{3}$, we simply have
\begin{equation*}
	\nrm{(r^{2} + z^{2})^{-\frac{1}{2}} V^{z}}_{L^{2}(r \ud r \ud z)} \aleq \nrm{\nb V^{z}}_{L^{2}(r \ud r \ud z)} \aeq \nrm{\nb (V^{z} \rd_{z})}_{L^{2}}.
\end{equation*}

\end{remark}
\begin{proof}
The inequalities $\aleq_{m}$ in \eqref{eq:axisymm-sob-Utht}--\eqref{eq:axisymm-sob-Vrz} are straightforward consequences of the relation $(\rd_{r}, \rd_{\tht}, \rd_{z}) = (r^{-1} x e_{x} + r^{-1} y e_{y}, x e_{y} - y e_{x}, e_{z})$. To prove the reverse inequality, as well as \eqref{eq:axisymm-Utht-rep}--\eqref{eq:axisymm-Vr-rep} and \eqref{eq:axisymm-sob-hardy-Utht}--\eqref{eq:axisymm-sob-hardy-Vrz}, we introduce the two-dimensional scaling vector field $S = r \rd_{r} = x \rd_{x} + y \rd_{y}$ and note the algebraic identities
\begin{align*}
	(S + 2) U^{\tht} &= - \rd_{y} (- y U^{\tht}) + \rd_{x} (x U^{\tht}) = - \rd_{y} \bfU^{x} + \rd_{x} \bfU^{y}, \\
	(S + 2) (r^{-1} V^{r}) &= \rd_{x} (x r^{-1} V^{r}) + \rd_{y} (y r^{-1} V^{r}) = \rd_{x} \bfV^{x} + \rd_{y} \bfV^{y}.
\end{align*}
Due to the similarity of these identities, the proofs for $\bfU$ and $\bfV$ are similar after this point; henceforth, we only focus on the case of $\bfU$. Noting that $(S + 2) U^{\tht} = r^{-1} \rd_{r}(r^{2} U^{\tht})$, \eqref{eq:axisymm-Utht-rep} follows from the fundamental theorem of calculus and the vanishing of $r^{2} U^{\tht}$ at $r = 0$. Furthermore, for any multiindex $\bt$ of order $\ell-1$, we have the commutator identity
\begin{equation*}
	(S + \ell+1) \nb_{x, y}^{\bt} U^{\tht} = \nb_{x, y}^{\bt} (S + 2) U^{\tht} = \nb_{x, y}^{\bt} (- \rd_{y} \bfU^{x} + \rd_{x} \bfU^{y}).
\end{equation*}
Hence, 
\begin{equation*}
\nrm{(S + \ell+1) \nb_{x, y}^{\bt} U^{\tht}}_{L^{2}} \aleq \nrm{\nb_{x, y}^{\ell} \bfU}_{L^{2}}. 
\end{equation*}
We claim that, for any $u = u(r)$ that vanishes sufficiently fast as $r \to \infty$, the following Hardy's inequality holds:
\begin{equation*}
	\nrm{(r \rd_{r} + \ell+1) u}_{L^{2}(r \ud r)}^{2} \geq \ell^{2} \nrm{u}_{L^{2}(r \ud r)}^{2}.
\end{equation*}
Assuming the claim, \eqref{eq:axisymm-sob-hardy-Utht} and the inequality $\ageq_{m}$ in \eqref{eq:axisymm-sob-Utht} readily follow. To prove the claim, we follow the classical proof of Hardy's inequality and argue as follows:
\begin{align*}
\int ((r \rd_{r} + \ell+1) u)^{2} \, r \ud r
&= \int \left( ((r \rd_{r} + 1) u)^{2} + 2 \ell u (r \rd_{r} + 1) u + \ell^{2} u^{2} \right)r \, \ud r \\
&= \int ((r \rd_{r} + 1) u)^{2} r +  \ell \rd_{r} u^{2} r^{2} + \ell(\ell+2) u^{2} r \, \ud r \\
&\geq \int ((r \rd_{r} + 1) u)^{2} r + \ell^{2} u^{2} r \, \ud r,
\end{align*}
where we used an integration by parts and the vanishing of $u(r)$ as $r \to \infty$ on the last line. Since the first term on the last line is nonnegative, the claim follows. \qedhere
\end{proof}

We now prove an $H^{m}$ a priori estimate for the axisymmetric \eqref{eq:e-mhd}  {system} \eqref{eq:e-mhd-fradiss-axi}. Let $\bfPi = \Pi(r, z) \rd_{\tht} \in C_{t}(I; H^{\infty})$ and fix $m \geq 4$. For any multiindex $\alp$ of order at most $m$, we compute 
\begin{align*}
	&\frac{1}{2} \frac{\ud}{\ud t} \int \abs{\nb^{\alp} \bfPi}^{2} \, \ud x \ud y \ud z 
	= \frac{1}{2} \frac{\ud}{\ud t} \int \abs{\nb^{\alp}(\Pi \rd_{\tht})}^{2} \, \ud x \ud y \ud z = 2 \int \nb^{\alp} (\Pi \rd_{z} \Pi \, \rd_{\tht}) \cdot \nb^{\alp} (\Pi \rd_{\tht}) \, \ud x \ud y \ud z  \\
	&\quad = 2 \int \Pi \rd_{z} \nb^{\alp} (\Pi \, \rd_{\tht}) \cdot \nb^{\alp} (\Pi \rd_{\tht}) \, \ud x \ud y \ud z  + 2 \int \{ \nb^{\alp}(\Pi \rd_{z} (\Pi \, \rd_{\tht})) - \Pi \rd_{z} \nb^{\alp} (\Pi \, \rd_{\tht})\} \cdot \nb^{\alp} (\Pi \rd_{\tht}) \, \ud x \ud y \ud z. 
\end{align*}
For the first term, we perform an integration by parts in $z$ and estimate as follows:
\begin{align*}
\abs*{\int \Pi \rd_{z} \nb^{\alp} (\Pi \, \rd_{\tht}) \cdot \nb^{\alp} (\Pi \rd_{\tht}) \, \ud x \ud y \ud z}
&=  \abs*{\int (\rd_{z} \Pi) \nb^{\alp} (\Pi \, \rd_{\tht}) \cdot \nb^{\alp} (\Pi \rd_{\tht}) \, \ud x \ud y \ud z} \aleq \nrm{\rd_{z} \Pi}_{L^{\infty}} \nrm{\bfPi}_{H^{m}}^{2} 
\aleq \nrm{\bfPi}_{H^{m}}^{3},
\end{align*}
where we used the Sobolev inequality, $\nrm{\rd_{z} \Pi}_{H^{2}} \aleq \nrm{\bfPi}_{H^{4}}$ and $m \geq 4$ for the last inequality.
For the second term, we use the fact that no factor of $\Pi$ is differentiated $m+1$-times. Using $\rd_{\tht} = x \bfe_{y} - y \bfe_{x}$ and Lemma~\ref{lem:axisymm-sob}, we have
\begin{align*}
\abs*{\int \{ \nb^{\alp}(\Pi \rd_{z} (\Pi \, \rd_{\tht})) - \Pi \rd_{z} \nb^{\alp} (\Pi \, \rd_{\tht})\} \cdot \nb^{\alp} (\Pi \rd_{\tht}) \, \ud x \ud y \ud z} 
& \lesssim \sum_{\bt : \abs{\bt} \geq 1, \, \bt \leq \alp} \nrm{\nb^{\bt} \Pi \nb^{\alp - \bt} \rd_{z} \bfPi}_{L^{2}} \nrm{\bfPi}_{H^{m}} \lesssim \nrm{\bfPi}_{H^{m}}^{3},
\end{align*}
where we used $m \geq 4$ on the last line.  Integrating the resulting differential inequality in time, we obtain the a priori bound
\begin{equation} \label{eq:e-mhd-fradiss-axi-Hm}
	\sup_{t \in [0, T_{0}]} \nrm{\bfPi(t)}_{H^{m}} \aleq \nrm{\bfPi(0)}_{H^{m}}, \qquad T_{0} = T_{0}(\nrm{\bfPi(0)}_{H^{m}})>0.
\end{equation} Moreover, we have the following local wellposedness result:
\begin{proposition} \label{prop:e-mhd-fradiss-axi-lwp} Given initial data $\bfPi_{0} = \Pi_{0}(r, z) \rd_{\tht} \in H^{m}$ with $m \geq 4$, there exists a unique solution $\bfPi \in C_{t}([0, T_{0}]; H^{m})$  to the system \eqref{eq:e-mhd-fradiss-axi} with $\bfPi(0) = \bfPi_{0}$, where $T_0$ depends only on $\nrm{\bfPi_{0}}_{H^{m}}$. Moreover, the solution satisfies the a priori estimate \eqref{eq:e-mhd-fradiss-axi-Hm} on $[0, T_{0}]$.
\end{proposition}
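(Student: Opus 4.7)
Since the $H^{m}$ a priori bound \eqref{eq:e-mhd-fradiss-axi-Hm} has already been derived in the excerpt, what remains is to produce an actual solution realizing that bound, together with uniqueness and $C_{t}H^{m}$-continuity. My plan is a standard viscous (parabolic) approximation, tailored to the axisymmetric swirl ansatz identified by Lemma~\ref{lem:axisymm-sob}.

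\emph{Step 1: Regularization.} For $\kappa > 0$ I would consider
\begin{equation*}
	\rd_{t} \bfPi^{\kappa} - \kappa \lap \bfPi^{\kappa} = 2 \Pi^{\kappa} \rd_{z} \Pi^{\kappa} \, \rd_{\tht}, \qquad \bfPi^{\kappa}(0) = \bfPi_{0}.
\end{equation*}
This is a semilinear parabolic system on $H^{m}(\bbR^{3})$. The closed subspace $\{\Pi(r,z)\rd_{\tht} : \Pi \in H^{m}\}$ is invariant under both the heat semigroup and the nonlinearity (since $\lap(\Pi \rd_{\tht}) = (\slashed{\lap}\Pi)\rd_{\tht}$ and the product $\Pi^{\kappa}\rd_{z}\Pi^{\kappa}$ keeps the swirl form), so standard Duhamel/fixed point arguments produce a unique short-time solution $\bfPi^{\kappa} = \Pi^{\kappa}\rd_{\tht} \in C_{t}([0,T_{\kappa}]; H^{m}) \cap L^{2}_{t}([0,T_{\kappa}]; H^{m+1})$.

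\emph{Step 2: Uniform bound and compactness.} Repeating the $H^{m}$ computation from the excerpt for $\Pi^{\kappa}$, the extra viscous term yields $-\kappa \nrm{\nb^{\abs{\alp}+1}\bfPi^{\kappa}}_{L^{2}}^{2} \leq 0$ after integration by parts, so \eqref{eq:e-mhd-fradiss-axi-Hm} holds uniformly in $\kappa$ on a common interval $[0,T_{0}]$ depending only on $\nrm{\bfPi_{0}}_{H^{m}}$. Reading $\rd_{t}\bfPi^{\kappa}$ off the equation shows $\rd_{t}\bfPi^{\kappa}$ is bounded in $H^{m-1}$ uniformly in $\kappa$; Aubin--Lions then extracts a subsequence converging strongly in $C_{t}H^{m-1}_{loc}$ to some $\bfPi \in L^{\infty}_{t}H^{m}$, and this suffices to pass to the limit in the quadratic nonlinearity.

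\emph{Step 3: Uniqueness and continuity.} For uniqueness, let $\bfPi_{1}, \bfPi_{2}$ be two $L^{\infty}_{t}H^{m}$ solutions with the same data. Their difference $D = \Pi_{1} - \Pi_{2}$ satisfies
\begin{equation*}
	\rd_{t} D = 2\Pi_{1}\rd_{z}D + 2(\rd_{z}\Pi_{2}) D.
\end{equation*}
Testing against $D$ in $L^{2}(r\,\ud r\,\ud z)$ (which is equivalent to $L^{2}(\bbR^{3})$ for the associated vector field) and integrating the first term by parts in $z$, I obtain
\begin{equation*}
	\frac{\ud}{\ud t}\nrm{D}_{L^{2}(r\,\ud r\,\ud z)}^{2} \leq C\bigl(\nrm{\rd_{z}\Pi_{1}}_{L^{\infty}} + \nrm{\rd_{z}\Pi_{2}}_{L^{\infty}}\bigr)\nrm{D}_{L^{2}(r\,\ud r\,\ud z)}^{2},
\end{equation*}
and Grönwall yields $D \equiv 0$ once I invoke $m \geq 4$ and Sobolev embedding to control the $L^{\infty}$ norms. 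Strengthening the convergence $\bfPi^{\kappa} \to \bfPi$ from weak-$\ast$ in $L^{\infty}_{t}H^{m}$ to strong in $C_{t}H^{m}$ follows by a Bona--Smith argument: mollify the data to $\bfPi_{0}^{\eps} \in H^{m+1}$, use the uniform-in-$\eps$ $H^{m}$ bound together with the above $L^{2}$ continuous dependence, and send $\eps \to 0$.

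\emph{Main obstacle.} The principal technical point is the preservation of the axisymmetric swirl-only structure at every stage; this is an algebraic consistency that one has to verify for the parabolic regularization, but once that is checked, Lemma~\ref{lem:axisymm-sob} lets one move freely between $H^{m}(\bbR^{3})$ norms for $\bfPi$ and weighted Sobolev norms for the scalar $\Pi(r,z)$, and the rest is routine. A minor subtlety is that the first-order energy computation uses $\rd_{z}$-integration by parts near $r=0$, but since the volume element $r\,\ud r \ud z$ carries no $z$-weight, no boundary term arises.
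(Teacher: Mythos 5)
Your proposal is correct and follows essentially the same route as the paper, which proves existence by a vanishing-viscosity approximation and uniqueness by an energy estimate on the difference of two solutions (the paper merely sketches this, deferring the analogous details to the Hall-MHD case). Your Steps 1--3 are a faithful and accurate fleshing-out of that sketch, including the key algebraic point that the swirl ansatz is preserved by the parabolic regularization.
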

Uniqueness can be proved along the similar lines as in the proof of the $H^{m}$ a priori estimate.\footnote{We note that the uniqueness statement  says that the solution to \eqref{eq:e-mhd} under the \textit{axisymmetric} assumption is unique. Of course, without a uniqueness statement for the original \eqref{eq:e-mhd} system, one cannot exclude the possibility that an axisymmetric initial data evolving into a non-axisymmetric solution as well.} For existence, one may use a vanishing viscosity method (as sketched below for the more complicated case of \eqref{eq:hall-mhd-fradiss-axi}). We omit the straightforward details.

\begin{remark} \label{rem:e-mhd-fradiss-axisymm-reg}
Using fractional Sobolev spaces, the regularity requirement may be optimized to $m > 7/2$. 
\end{remark}

Next, we turn to the axisymmetric \eqref{eq:hall-mhd} system, i.e., \eqref{eq:hall-mhd-fradiss-axi}. Let $(\bfomg, \bfPi) = (\Omg(r, z) \rd_{\tht}, \Pi(r, z) \rd_{\tht}) \in C_{t}(I; H^{\infty})$ and fix $m \geq 4$. Let $\bfV = V^{r}(r, z) \rd_{r} + V^{\tht}(r, z)$ be given from $\bfomg$ by the Biot--Savart law; hence
\begin{equation} \label{eq:hall-mhd-fradiss-axi-biotsavart}
	\nrm{\nb \bfV}_{H^{m-1}} \aleq \nrm{\bfomg}_{H^{m-1}}.
\end{equation}
By Lemma~\ref{lem:axisymm-sob}, it follows that $\nrm{\nb V^{r}}_{H^{m-1}} + \nrm{r^{-1} V^{r}}_{L^{2}} + \nrm{\nb V^{z}}_{H^{m-1}} \aleq \nrm{\nb \bfV}_{H^{m-1}}.$ For any multiindex $\alp$ of order at most $m-1$, we use the $\Omg$-equation in \eqref{eq:hall-mhd-fradiss-axi} to compute 
\begin{align*}
	& \frac{1}{2} \frac{\ud}{\ud t} \int \abs{\nb^{\alp} \bfomg}^{2} \, \ud x \ud y \ud z  = \int \nb^{\alp}(\rd_{t} \Omg \rd_{\tht}) \cdot \nb^{\alp} (\Omg \rd_{\tht}) \, \ud x \ud y \ud z  \\
	&= - \int (V^{r} \rd_{r} + V^{z} \rd_{z}) \nb^{\alp} (\Omg \rd_{\tht}) \cdot \nb^{\alp} (\Omg \rd_{\tht}) \, \ud x \ud y \ud z \\
	&\peq - \int \{ \nb^{\alp} ((V^{r} \rd_{r} + V^{z} \rd_{z}) \Omg \rd_{\tht}) - (V^{r} \rd_{r} + V^{z} \rd_{z}) \nb^{\alp} (\Omg \rd_{\tht}) \}  \cdot \nb^{\alp} (\Omg \rd_{\tht}) \, \ud x \ud y \ud z \\
	&\peq - 2 \int \nb^{\alp} (\Pi \rd_{z} \Pi \, \rd_{\tht}) \cdot \nb^{\alp} (\Omg \rd_{\tht}) \, \ud x \ud y \ud z  - \nu \int \abs{\nb \nb^{\alp} (\Omg \, \rd_{\tht})}^{2} \, \ud x \ud y \ud z.
\end{align*}
The first term vanishes after an integration by parts (since $\bfV$ is divergence-free) and the last term is nonpositive. Estimating the remaining terms as in the previous proof of \eqref{eq:e-mhd-fradiss-axi}, we obtain 
\begin{align*}
\frac{1}{2} \frac{\ud}{\ud t} \int \abs{\nb^{\alp} \bfomg}^{2} \, \ud x \ud y \ud z 
& \aleq \nrm{\bfomg}_{H^{m-1}}^{3} + \nrm{\bfPi}_{H^{m}}^{2} \nrm{\bfomg}_{H^{m-1}},
\end{align*}
provided that $m \geq 4$. Next, for any multiindex $\alp'$ of order at most $m$, we use the $\Pi$-equation in \eqref{eq:hall-mhd-fradiss-axi} to compute 
\begin{align*}
	& \frac{1}{2} \frac{\ud}{\ud t} \int \abs{\nb^{\alp'} \bfPi}^{2} \, \ud x \ud y \ud z = \int \nb^{\alp'}(\rd_{t} \Pi \rd_{\tht}) \cdot \nb^{\alp'}(\Pi \rd_{\tht}) \, \ud x \ud y \ud z \\
	&= - \int \nb^{\alp'} ((V^{r} \rd_{r} + V^{z} \rd_{z}) \Pi \rd_{\tht}) \cdot \nb^{\alp'} (\Pi \rd_{\tht}) \, \ud x \ud y \ud z  + 2 \int \nb^{\alp'} (\Pi \rd_{z} \Pi \, \rd_{\tht}) \cdot \nb^{\alp'} (\Pi \rd_{\tht}) \, \ud x \ud y \ud z  \\ 
	&= - \int (V^{r} \rd_{r} + V^{z} \rd_{z}) \nb^{\alp'} (\Pi \rd_{\tht}) \cdot \nb^{\alp'} (\Pi \rd_{\tht}) \, \ud x \ud y \ud z \\
	&\peq - \int \{ \nb^{\alp'} ((V^{r} \rd_{r} + V^{z} \rd_{z}) \Pi \rd_{\tht}) - (V^{r} \rd_{r} + V^{z} \rd_{z}) \nb^{\alp'} (\Pi \rd_{\tht}) \}  \cdot \nb^{\alp'} (\Pi \rd_{\tht}) \, \ud x \ud y \ud z \\
	& \peq + 2 \int \Pi \rd_{z} \nb^{\alp'} (\Pi \, \rd_{\tht}) \cdot \nb^{\alp'} (\Pi \rd_{\tht}) \, \ud x \ud y \ud z  + 2 \int \{ \nb^{\alp'}(\Pi \rd_{z} (\Pi \, \rd_{\tht})) - \Pi \rd_{z} \nb^{\alp'} (\Pi \, \rd_{\tht})\} \cdot \nb^{\alp'} (\Pi \rd_{\tht}) \, \ud x \ud y \ud z .
\end{align*}
The first term vanishes after an integration by parts (since $\bfV$ is divergence-free). Estimating the remaining terms as in the proof of \eqref{eq:e-mhd-fradiss-axi}, we obtain 
\begin{align*}
\frac{1}{2} \frac{\ud}{\ud t} \int \abs{\nb^{\alp'} \bfPi}^{2} \, \ud x \ud y \ud z
 \aleq (\nrm{\bfomg}_{H^{m-1}} + \nrm{\bfPi}_{H^{m}}) \nrm{\bfPi}_{H^{m}}^{2}.
\end{align*}
We now sum up these differential inequalities and integrate in time. As a result, for $m \geq 4$, we obtain the a priori bound
\begin{equation} \label{eq:hall-mhd-fradiss-axi-Hm}
\begin{aligned}
	 \sup_{t \in [0, T_{0}]} \left( \nrm{(\bfomg(t), \bfPi(t))}_{H^{m-1} \times H^{m}} \right) 
	+ \left( \nu \int_{0}^{T_{0}} \nrm{\bfomg(t)}_{H^{m}}^{2}  \, \ud t\right)^{\frac{1}{2}}
	 \aleq_{m} \nrm{(\bfomg(0), \bfPi(0))}_{H^{m-1} \times H^{m}}
\end{aligned}
\end{equation}
on a time interval $[0, T_{0}]$, for $T_{0}$ sufficiently small depending on $\nrm{(\bfomg(0), \bfPi(0))}_{H^{m-1} \times H^{m}}$. Moreover, we have the following local wellposedness result:
\begin{proposition} \label{prop:hall-mhd-fradiss-axi-lwp}
Given initial data $(\bfomg_{0} = \Omg(r, z) \rd_{\tht}, \bfPi_{0} = \Pi_{0}(r, z) \rd_{\tht}) \in H^{m-1} \times H^{m}$ with $m \geq 4$, there exists a unique solution $(\bfomg, \bfPi) \in C_{t}([0, T_{0}]; H^{m-1} \times H^{m})$ to \eqref{eq:hall-mhd-fradiss-axi} with $(\bfV(0), \bfPi(0)) = (\bfV_{0}, \bfPi_{0})$ such that the LHS of \eqref{eq:hall-mhd-fradiss-axi-Hm} is finite, where $T_0$ depends only on $\nrm{(\bfV_{0}, \bfPi_{0})}_{H^{m-1} \times H^{m}}$. Moreover, the solution satisfies the a priori estimate \eqref{eq:hall-mhd-fradiss-axi-Hm} on $[0, T_{0}]$ with an implicit constant that only depends on $m$.
\end{proposition}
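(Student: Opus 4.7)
My plan is as follows. The $H^{m-1}\times H^m$ a priori estimate \eqref{eq:hall-mhd-fradiss-axi-Hm} is already in hand from the lines immediately preceding the proposition, so existence and uniqueness will both be obtained from it by standard arguments. Below I sketch the overall structure and flag the only delicate algebraic point.

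For existence, my plan is a \emph{vanishing artificial resistivity} regularization. For each $\kappa > 0$ I would replace the $\Pi$-equation in \eqref{eq:hall-mhd-fradiss-axi} by
\begin{equation*}
	\rd_t \Pi + (V^r \rd_r + V^z \rd_z) \Pi - 2 \Pi \rd_z \Pi = \kappa \slashed{\lap} \Pi.
\end{equation*}
The resulting system is a fully dissipative axisymmetric Hall-MHD, for which global wellposedness of smooth solutions is classical (see, e.g., \cite{CWe}) and produces solutions $(\bfomg_\kappa, \bfPi_\kappa)$. A quick inspection of the derivation of \eqref{eq:hall-mhd-fradiss-axi-Hm} shows that the new $\kappa$-term contributes only a nonpositive piece to the $H^m$-energy identity for $\Pi$, so the bound holds uniformly in $\kappa > 0$ on a common interval $[0, T_0]$ depending only on $\nrm{(\bfomg_0, \bfPi_0)}_{H^{m-1}\times H^m}$. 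The equations themselves give uniform control of $\rd_t(\bfomg_\kappa, \bfPi_\kappa)$ in $L^\infty_t(H^{m-3}_{loc} \times H^{m-1}_{loc})$, so Aubin--Lions compactness furnishes a subsequential limit $(\bfomg, \bfPi) \in L^\infty_t([0, T_0]; H^{m-1} \times H^m)$ with strong $L^2_t$ convergence one derivative below on compact sets --- enough to pass to the limit in every nonlinear term. Lower semicontinuity delivers \eqref{eq:hall-mhd-fradiss-axi-Hm} for the limit; weak time-continuity is automatic; and strong time-continuity is then standard by the Bona--Smith argument once uniqueness is proved.

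For uniqueness, my plan is to take two solutions $(\bfomg_i, \bfPi_i)$ with the same data, both obeying \eqref{eq:hall-mhd-fradiss-axi-Hm}, and close an energy inequality for the difference $(\delta\bfomg, \delta\bfPi)$ in a norm one scale below the solution space. The transport and Biot--Savart contributions are routine thanks to $\nb\cdot\bfV_i = 0$, the $L^\infty$-bounds $\nrm{\nb\Omg_i}_{L^\infty} + \nrm{\nb\Pi_i}_{L^\infty} \aleq \nrm{(\bfomg_i, \bfPi_i)}_{H^{m-1}\times H^m}$ from Sobolev embedding (using $m \geq 4$), and Hardy's inequality for the $r^{-1}V^r$-type terms. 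The main obstacle, and the only nontrivial point I anticipate, is the nominal derivative loss from the Hall-type source $\rd_z(\Pi^2)$ in the $\Omg$-equation, whose linearization $\rd_z((\Pi_1+\Pi_2)\delta\Pi)$ is genuinely one order above the $L^2$-energy level of $\delta\bfomg$. The plan to resolve this is to estimate $\delta\bfomg$ at $H^{-1}$-regularity --- equivalently, the corresponding stream-function difference $\delta\bfV$ in $L^2$ --- and $\delta\Pi$ at $L^2$-regularity: with this pairing, the dangerous term in the $\delta\Omg$-equation becomes $\nrm{\rd_z((\Pi_1+\Pi_2)\delta\Pi)}_{H^{-1}} \aleq \nrm{\bfPi_i}_{L^\infty} \nrm{\delta\Pi}_{L^2}$ after one integration by parts, while the matching term in the $\delta\Pi$-equation integrates by parts in $z$ to produce only a zeroth-order error. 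Gronwall's inequality then closes the estimate and forces $\delta\bfomg \equiv \delta\Pi \equiv 0$.
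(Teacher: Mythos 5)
Your proposal is correct and follows essentially the same route as the paper: existence by adding an artificial resistivity $\kappa\slashed{\lap}\Pi$ to the $\Pi$-equation, deriving a $\kappa$-independent version of \eqref{eq:hall-mhd-fradiss-axi-Hm}, and passing to the limit $\kappa\to 0$ by compactness; uniqueness by an energy estimate for the difference run one regularity level below the solution space (the paper leaves this as ``along similar lines as the $H^m$ a priori estimate,'' and your $H^{-1}\times L^2$ pairing, with the integration by parts in $z$ absorbing the derivative loss from $\rd_z(\Pi^2)$, is a correct instantiation of exactly the step that cannot be closed at the top level). The only cosmetic discrepancy is the reference for wellposedness of the regularized system (the paper cites \cite{CWW} rather than \cite{CWe}), which does not affect the argument.
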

As before, uniqueness can be proved along the similar lines as in the proof of the $H^{m}$ a priori estimate. For the proof of existence, we may add an artificial viscosity term $\kpp \lap \Pi$ ($\kpp > 0$) on the RHS of the $\Pi$-equation in \eqref{eq:hall-mhd-fradiss-axi}, for which local wellposedness is proved in \cite{CWW}. For this viscous system, an argument similar to the proof of the $H^{m}$ a priori estimate leads to a $\kpp$-independent $H^{m}$ a priori estimate of the form \eqref{eq:hall-mhd-fradiss-axi-Hm} on a short time interval $[0, T_{0}]$, with $T_{0}$ only depending on $\nrm{(\bfV_{0}, \bfPi_{0})}_{H^{m-1} \times H^{m}}$. Then by taking the limit $\kpp \to 0$, we obtain a solution to \eqref{eq:hall-mhd-fradiss-axi} in the desired function space. We omit the straightforward details.

Remark \ref{rem:e-mhd-fradiss-axisymm-reg} apply to the case of \eqref{eq:hall-mhd} as well.

\subsubsection{Further properties}
We now discuss further properties of the axisymmetric solutions that will be used later. We begin with the case of \eqref{eq:e-mhd}. Observe that \eqref{eq:e-mhd-fradiss-axi} with $\eta = 0$ is simply the inviscid Burgers' equation in $(t, z)$. As a result, it obeys the following finite speed of propagation property:
\begin{lemma} \label{lem:e-mhd-fradiss-axi-fsp}
For any $C^{1}$ solution $\Pi$ to \eqref{eq:e-mhd-fradiss-axi}, the value of $\Pi(t, r, z)$ is uniquely determined by $\Pi_{0}$ on $\set{(r, z') \in \set{r} \times \bbR : \abs{z - z'} \leq t \nrm{\Pi_{0}}_{L^{\infty}}}$.
\end{lemma}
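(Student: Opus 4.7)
The plan is to invoke the method of characteristics. For each fixed $r$, equation \eqref{eq:e-mhd-fradiss-axi} reads $\rd_t \Pi - 2\Pi \rd_z \Pi = 0$, the inviscid Burgers' equation in $(t, z)$. Given a $C^{1}$ solution $\Pi$, I would define the backward characteristic through $(t, r, z_0)$ by
\begin{equation*}
    \dot z(s) = -2\Pi(s, r, z(s)), \quad z(t) = z_0.
\end{equation*}
The Cauchy--Lipschitz theorem, applicable by the $C^{1}$ regularity of $\Pi$, gives local unique solvability, and the a priori bound $|\dot z| \le 2 \nrm{\Pi}_{L^{\infty}_{s, z}}$ lets the solution be continued down to $s = 0$. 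A direct chain-rule computation yields $\frac{\ud}{\ud s}\Pi(s, r, z(s)) = \rd_s \Pi - 2\Pi \rd_z \Pi = 0$, so $\Pi$ is constant along characteristics.

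This constancy has two immediate consequences. First, any value $\Pi(s_0, r, z_0')$ equals $\Pi_0(r, z(0))$ via its backward characteristic, so $\nrm{\Pi(s, r, \cdot)}_{L^{\infty}_{z}} \le \nrm{\Pi_0(r, \cdot)}_{L^{\infty}_{z}}$ for all $s \in [0, t]$. Second, since $\Pi(s, r, z(s))$ is constant in $s$, the characteristic ODE has a constant right-hand side, so the characteristic is the straight line $z(s) = z_0 + 2\Pi(t, r, z_0)(t - s)$. Combined with $\Pi(t, r, z_0) = \Pi_0(r, z(0))$, this yields the implicit relation
\begin{equation*}
    \Pi(t, r, z_0) = \Pi_0\bigl(r, \, z_0 + 2t \Pi(t, r, z_0)\bigr),
\end{equation*}
and the $L^{\infty}$ bound forces the argument $z_0 + 2t\Pi(t, r, z_0)$ to lie in $\set{z' : |z' - z_0| \le 2t \nrm{\Pi_0}_{L^{\infty}}}$. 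Hence $\Pi(t, r, z_0)$ depends only on the restriction of $\Pi_0$ to this set.

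To make ``uniquely determined'' fully rigorous---i.e., to conclude that two $C^{1}$ solutions with initial data agreeing on the set must coincide at $(t, r, z_0)$---I would use the standard backward-cone $L^{2}$-energy estimate for scalar conservation laws. Setting $w = \Pi^{(1)} - \Pi^{(2)}$ and $V = \Pi^{(1)} + \Pi^{(2)}$, the conservation form $\rd_s \Pi^{(i)} = \rd_z((\Pi^{(i)})^{2})$ gives $\rd_s w = \rd_z(V w)$. Computing $\frac{\ud}{\ud s} \int_{|z' - z_0| \le K(t - s)} w^{2} \, \ud z'$ with $K \ge \nrm{V}_{L^{\infty}}$ and applying Leibniz's rule for the moving boundary produces nonpositive boundary flux plus a volume term bounded by $\nrm{V_z}_{L^{\infty}} \int w^{2} \, \ud z'$; Gronwall combined with continuity of $w$ at $s = t$ then forces $w(t, r, z_0) = 0$. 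This energy step is the main (still routine) technical ingredient. The factor-of-$2$ discrepancy between the stated speed $\nrm{\Pi_{0}}_{L^{\infty}}$ and the speed $2\nrm{\Pi_{0}}_{L^{\infty}}$ derived above comes from the coefficient in the Burgers equation and is immaterial for the applications in later sections.
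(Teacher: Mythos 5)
The paper states this lemma without proof, as an immediate consequence of the observation that \eqref{eq:e-mhd-fradiss-axi} is the inviscid Burgers' equation in $(t,z)$; your write-up supplies the standard justification (constancy along backward characteristics to identify the domain of dependence, plus a backward-cone $L^{2}$ estimate in conservation form to make ``uniquely determined'' precise), and both halves of your argument are sound. The one substantive point is the factor of $2$ you flag at the end: since the characteristic speed is $|\dot z| = 2|\Pi| \le 2\nrm{\Pi_{0}}_{L^{\infty}}$, the foot of the characteristic through $(t,r,z)$ can lie as far as $2t\nrm{\Pi_{0}}_{L^{\infty}}$ from $z$, so the correct domain of dependence is $\set{z' : \abs{z-z'} \le 2t\nrm{\Pi_{0}}_{L^{\infty}}}$ and the lemma as literally stated (with $t\nrm{\Pi_{0}}_{L^{\infty}}$) is off by that factor --- this is a slip in the statement rather than in your proof, and it is harmless for the applications (indeed, the proof of the subsequent lemma asserting $\chi(z)\rd_{z}\Pi = 0$ for $0 \le t \le 4\ell$ explicitly budgets for the speed $2\nrm{\Pi}_{L^{\infty}}$). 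You are right to downgrade it to a constant, but it would be cleaner to state and prove the lemma with $2t\nrm{\Pi_{0}}_{L^{\infty}}$ rather than call the discrepancy immaterial. A minor remark on the uniqueness step: when comparing two solutions whose data agree only on the cone base, the slope $K$ must dominate $\nrm{\Pi^{(1)}}_{L^{\infty}} + \nrm{\Pi^{(2)}}_{L^{\infty}}$, which is controlled by the sup norms of both initial data on all of $\bbR$, not just on the base; this is routine but worth saying if the two data are allowed to differ off the cone.
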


In general, we have the following estimate near the initial time:
\begin{lemma} \label{lem:e-mhd-fradiss-axi-small-t}
	Let $m \geq 10$. Consider an initial data set $\bfPi_{0} = \Pi_{0}(r, z) \rd_{\tht} \in H^{m}$ and the corresponding solution $\bfPi$ to \eqref{eq:e-mhd-fradiss-axi} on $[0, T_{0}]$ given by Proposition~\ref{prop:e-mhd-fradiss-axi-lwp}. Then for $t \in [0, T_{0}]$, we have
	\begin{equation}\label{eq:e-mhd-small-t-Pi}
	\begin{split}
	\nrm*{\Pi - \left(\Pi_{0} + \Pi_{0} \rd_{z} \Pi_{0} \right)}_{H^{m-3}} \lesssim_{m} t^2  \nrm{\bfPi_{0}}_{H^{m}}^{3} . 
	\end{split}
\end{equation}
\end{lemma}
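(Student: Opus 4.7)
The plan is to treat this as a first-order Taylor expansion in $t$ about $t=0$ and apply an energy estimate to the remainder. Since \eqref{eq:e-mhd-fradiss-axi} gives $\rd_t \Pi|_{t=0} = 2 \Pi_0 \rd_z \Pi_0$, I introduce the approximant
\begin{equation*}
\tld{\Pi}(t, r, z) := \Pi_{0}(r,z) + 2 t \, \Pi_{0}(r,z) \rd_{z} \Pi_{0}(r,z)
\end{equation*}
(I suspect the stated approximation $\Pi_0 + \Pi_0 \rd_z \Pi_0$ is missing the factor $2t$; the argument below goes through with the same $t^2$ rate either way modulo constants). Setting $w := \Pi - \tld{\Pi}$, one has $w|_{t=0}=0$, and a direct manipulation of \eqref{eq:e-mhd-fradiss-axi} gives the scalar transport-type equation
\begin{equation*}
\rd_{t} w - 2 \tld{\Pi} \rd_{z} w - 2 (\rd_{z} \tld{\Pi}) w - 2 w \rd_{z} w = F, \qquad F := 2(\tld{\Pi} \rd_{z} \tld{\Pi} - \Pi_{0} \rd_{z} \Pi_{0}).
\end{equation*}
Expanding $F$ with $g := \Pi_{0} \rd_{z} \Pi_{0}$ yields $F = 4 t ( \Pi_{0} \rd_{z} g + g \rd_{z} \Pi_{0}) + 8 t^{2} g \rd_{z} g$, so $F$ vanishes to first order in $t$.

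First I would bound the forcing. Since $m \geq 10$, the space $H^{m-3}$ is a Banach algebra (it embeds in $W^{1,\infty}$ by Sobolev embedding, as $m-3 > 5/2$), and Moser's inequality together with $\nrm{g}_{H^{m-1}} \aleq \nrm{\bfPi_0}_{H^m}^2$ gives
\begin{equation*}
\nrm{F(t)}_{H^{m-3}} \aleq t \, \nrm{\bfPi_{0}}_{H^{m}}^{3} + t^{2} \nrm{\bfPi_{0}}_{H^{m}}^{5} \aleq t \, \nrm{\bfPi_{0}}_{H^{m}}^{3},
\end{equation*}
uniformly on the a priori interval $[0, T_0]$ from Proposition~\ref{prop:e-mhd-fradiss-axi-lwp}.

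Next I would perform the $H^{m-3}$ energy estimate. For each multiindex $\alp$ with $\abs{\alp} \leq m-3$, applying $\nb^{\alp}$ to the $w$-equation and pairing with $\nb^{\alp} w$ in $L^{2}$: the leading transport piece $2 \tld{\Pi} \rd_{z} (\nb^{\alp} w)$ integrates by parts to $-(\rd_{z} \tld{\Pi}) (\nb^{\alp} w)^{2}$, controlled by $\nrm{\rd_{z} \tld{\Pi}}_{L^{\infty}} \aleq \nrm{\bfPi_{0}}_{H^{m}}$; the commutator $[\nb^{\alp}, \tld{\Pi}] \rd_{z} w$ is handled by the Kato--Ponce estimate using $\nrm{\nb \tld{\Pi}}_{L^{\infty}} \aleq \nrm{\bfPi_{0}}_{H^{m}}$ and $\nrm{\rd_{z} w}_{L^{\infty}} \aleq \nrm{w}_{H^{m-3}}$ (again needing $m-3 > 5/2$); the $w \rd_{z} w$ term is absorbed analogously, with an additional factor of $\nrm{w}_{H^{m-3}}$ which is $o(1)$ for small $t$. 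Combining, one obtains
\begin{equation*}
\frac{\ud}{\ud t} \nrm{w(t)}_{H^{m-3}} \aleq \nrm{\bfPi_{0}}_{H^{m}} \nrm{w(t)}_{H^{m-3}} + t \, \nrm{\bfPi_{0}}_{H^{m}}^{3},
\end{equation*}
and Gr\"onwall with $w(0) = 0$ delivers the claimed bound $\nrm{w(t)}_{H^{m-3}} \aleq_{m} t^{2} \nrm{\bfPi_{0}}_{H^{m}}^{3}$ after possibly shrinking $T_{0}$.

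The only real subtlety is the commutator step, which is what forces the loss of precisely three derivatives: one derivative is consumed by the product structure $\tld{\Pi} \rd_{z} w$, and two more are needed to place $\tld{\Pi}$ and $\rd_z w$ in $L^\infty$ via Sobolev embedding so that Moser's estimate closes. With the generous assumption $m \geq 10$ this is easy and uniform constants are available, so I do not anticipate further difficulties.
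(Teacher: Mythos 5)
Your argument is essentially correct, and you are right that the statement as printed is missing the factor $2t$ in front of $\Pi_{0}\rd_{z}\Pi_{0}$: the paper's own proof makes clear that the intended approximant is $\Pi_{0}+t\,\rd_{t}\Pi(0)=\Pi_{0}+2t\,\Pi_{0}\rd_{z}\Pi_{0}$. However, you take a genuinely different and noticeably heavier route than the paper. The paper does not derive an equation for the difference at all: it writes the second-order Taylor remainder bound
\begin{equation*}
\nrm{\bfPi(t)-\bfPi(0)-t\,\rd_{t}\bfPi(0)}_{H^{m-2}}\aleq t^{2}\sup_{t'\in[0,t]}\nrm{\rd_{t}^{2}\bfPi(t')}_{H^{m-2}},
\end{equation*}
uses the equation twice to get $\nrm{\rd_{t}\bfPi}_{H^{m-1}}\aleq\nrm{\bfPi}_{H^{m}}^{2}$ and $\nrm{\rd_{t}^{2}\bfPi}_{H^{m-2}}\aleq\nrm{\bfPi}_{H^{m-1}}\nrm{\rd_{t}\bfPi}_{H^{m-1}}\aleq\nrm{\bfPi}_{H^{m}}^{3}$, and closes with the a priori bound \eqref{eq:e-mhd-fradiss-axi-Hm} and Lemma~\ref{lem:axisymm-sob}. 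This avoids your entire Gr\"onwall/commutator machinery and, importantly, avoids the delicate point your write-up glosses over: the $H^{m-3}$ norm in the statement is the weighted norm of the scalar $\Pi$ inherited from the $3$-dimensional Sobolev norm of the vector field $\Pi\rd_{\tht}$ via Lemma~\ref{lem:axisymm-sob}, so if you run your energy estimate literally on the scalar in $(r,z)$ you must justify the algebra, Kato--Ponce and Sobolev embedding steps in the degenerate weighted space near $r=0$; the clean fix is to run the estimate on the vector fields $w\rd_{\tht}$ exactly as in the paper's proof of \eqref{eq:e-mhd-fradiss-axi-Hm}. Two further small points: your higher-order forcing term $g\rd_{z}g$ is quartic, not quintic, in $\Pi_{0}$, so the bound is $t^{2}\nrm{\bfPi_{0}}_{H^{m}}^{4}$, which is absorbed into $t\nrm{\bfPi_{0}}_{H^{m}}^{3}$ only after invoking $T_{0}\aleq\nrm{\bfPi_{0}}_{H^{m}}^{-1}$ (worth saying explicitly); and the quadratic term $w\rd_{z}w$ requires a short continuity argument to keep $\nrm{w}_{H^{m-3}}$ small, which you assert but do not set up. None of these is fatal, but the paper's Taylor-expansion argument delivers the same conclusion in a few lines with none of these complications.
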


We remark that the factor $t^{2}$ allows us to treat the expression on the LHS as an acceptable error in the proof of Sobolev instability/illposedness of \eqref{eq:e-mhd} outside axisymmetry below.
\begin{proof}
In this proof, we suppress the dependence of implicit constants on $m$. By Taylor expansion in $t$, we have
\begin{equation} \label{eq:e-mhd-small-t-B}
	\nrm{\bfPi(t) - \bfPi(0) - t \rd_{t} \bfPi(0)}_{H^{m-2}}
	\aleq t^{2} \sup_{t' \in [0, t]} \nrm{\rd_{t}^{2} \bfPi(t')}_{H^{m-2}}.
\end{equation}
By \eqref{eq:e-mhd-fradiss-axi} and Lemma~\ref{lem:axisymm-sob}, the LHS of \eqref{eq:e-mhd-small-t-B} dominates the LHS of \eqref{eq:e-mhd-small-t-Pi}. Hence, it only remains to bound the RHS of \eqref{eq:e-mhd-small-t-B}. By \eqref{eq:e-mhd-fradiss-axi}, for each fixed $t \in [0, T_{0}]$, we have
\begin{align*}
	\nrm{\rd_{t} \bfPi}_{H^{m-1}}
	& \aleq  \nrm{\bfPi}_{H^{m}}^{2}, \qquad \nrm{\rd_{t}^{2} \bfPi}_{H^{m-2}}
	 \aleq \nrm{\bfPi}_{H^{m-1}} \nrm{\rd_{t} \bfPi}_{H^{m-1}}  \aleq  \nrm{\bfPi}_{H^{m}}^{3}.
\end{align*}
Then by the a priori estimate \eqref{eq:e-mhd-fradiss-axi-Hm}, the desired bound follows. \qedhere
\end{proof}

In the case of \eqref{eq:hall-mhd}, we consider initial data of the form $(\bfomg_{0}, \bfPi_{0}) = (0, \Pi(r, z) \rd_{\tht})$. For the corresponding solution, the following comparison with the solution to \eqref{eq:e-mhd-fradiss-axi} with the same initial data $\bfPi_{0} = \Pi_{0} \rd_{\tht}$ holds near the initial time:
\begin{lemma} \label{lem:hall-mhd-fradiss-axi-small-t}
	Let $\nu \geq 0$ and $m \geq 10$. Consider an initial data set $(\bfomg_{0}, \bfPi_{0}) = (0, \Pi(r, z) \rd_{\tht})$, where $\bfPi_{0} \in H^{m}$, and let $(\bfomg, \bfPi)$ be the corresponding solution to \eqref{eq:hall-mhd-fradiss-axi} on $[0, T_{0}]$ given by Proposition~\ref{prop:hall-mhd-fradiss-axi-lwp}. Moreover, let $\bfPi^{e} = \Pi^{e}(r, z) \rd_{\tht}$ be the solution to \eqref{eq:e-mhd-fradiss-axi} with $\bfPi^{e}(t=0) = \Pi_{0}(r, z) \rd_{\tht}$. Then for $t \in [0, T_{0}]$, we have
	\begin{align}
	\nrm{\nb \bfV}_{H^{m-3}}
	& \aleq_{m} t (\nrm{\bfPi_{0}}_{H^{m}}^{2} + \nu \nrm{\bfPi_{0}}_{H^{m}} \label{eq:small-t-V}), \\
	\nrm{\Pi - \Pi^{e}}_{H^{m-3}} 
	& \aleq_{m} t^{2} \exp(C \nrm{\bfPi_{0}}_{H^{m}} t) (\nrm{\bfPi_{0}}_{H^{m}}^{3} + \nu \nrm{\bfPi_{0}}_{H^{m}}^{2}). \label{eq:small-t-Pi}
\end{align}
\end{lemma}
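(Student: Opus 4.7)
The proof is a time-Taylor expansion around $t=0$, exploiting the crucial initial vanishing $\bfomg_{0} = 0$, combined with the a priori bound \eqref{eq:hall-mhd-fradiss-axi-Hm} from Proposition~\ref{prop:hall-mhd-fradiss-axi-lwp}, which (for $m \geq 10$, well above the threshold) gives $\nrm{\bfomg(s)}_{H^{m-1}} + \nrm{\bfPi(s)}_{H^{m}} \aleq_{m} \nrm{\bfPi_{0}}_{H^{m}}$ on $[0,T_{0}]$. The same estimate applies to $\bfPi^{e}$ by Proposition~\ref{prop:e-mhd-fradiss-axi-lwp}.

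\textbf{Step 1 (bound on $\nb\bfV$).} Since $\bfomg(0) = 0$, I write $\bfomg(t) = \int_{0}^{t} \rd_{t}\bfomg(s) \, ds$ and read off $\rd_{t}\Omg$ from \eqref{eq:hall-mhd-fradiss-axi}:
\[
\rd_{t}\Omg = -(V^{r}\rd_{r} + V^{z}\rd_{z})\Omg - 2\Pi\rd_{z}\Pi + \nu\slashed{\lap}\Omg.
\]
Moser product estimates (valid in $H^{m-3}$ since $m-3 \geq 7 > 3/2$ so $H^{m-3}$ is an algebra on $\bbR^{3}$), combined with Lemma~\ref{lem:axisymm-sob} and the a priori bound, give
\[
\nrm{\rd_{t}\bfomg(s)}_{H^{m-3}} \aleq_{m} \nrm{\bfPi_{0}}_{H^{m}}^{2} + \nu \nrm{\bfPi_{0}}_{H^{m}},
\]
the first term absorbing both the transport contribution (since $\nrm{\nb\bfV}_{H^{m-2}} \aleq \nrm{\bfomg}_{H^{m-2}} \aleq \nrm{\bfPi_{0}}_{H^{m}}$) and the source $-2\Pi\rd_{z}\Pi$, and the second coming from $\nrm{\slashed{\lap}\Omg}_{H^{m-3}} \aleq \nrm{\bfomg}_{H^{m-1}}$. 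Integrating in $s$ and applying the Biot--Savart bound \eqref{eq:hall-mhd-fradiss-axi-biotsavart} (at the level of $H^{m-3}$) yields \eqref{eq:small-t-V}.

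\textbf{Step 2 (bound on $\Pi - \Pi^{e}$).} Set $w := \Pi - \Pi^{e}$, so $w(0) = 0$. Subtracting \eqref{eq:e-mhd-fradiss-axi} from the $\Pi$-equation in \eqref{eq:hall-mhd-fradiss-axi} and using $\Pi\rd_{z}\Pi - \Pi^{e}\rd_{z}\Pi^{e} = w\rd_{z}\Pi + \Pi^{e}\rd_{z}w$ gives
\[
\rd_{t} w + 2\Pi^{e}\rd_{z}w = -(V^{r}\rd_{r} + V^{z}\rd_{z})\Pi + 2w\rd_{z}\Pi =: F.
\]
I view the LHS as a transport in $z$ with smooth drift $2\Pi^{e}$, and apply the standard commutator estimate for transport in $H^{m-3}$:
\[
\frac{\ud}{\ud t}\nrm{w}_{H^{m-3}} \aleq_{m} \nrm{\nb\Pi^{e}}_{L^{\infty}}\nrm{w}_{H^{m-3}} + \nrm{F}_{H^{m-3}}.
\]
The forcing is controlled via Moser estimates and Step 1:
\[
\nrm{F(s)}_{H^{m-3}} \aleq_{m} \nrm{\nb\bfV(s)}_{H^{m-3}}\nrm{\bfPi(s)}_{H^{m}} + \nrm{w(s)}_{H^{m-3}}\nrm{\bfPi(s)}_{H^{m}} \aleq s\bigl(\nrm{\bfPi_{0}}_{H^{m}}^{3} + \nu\nrm{\bfPi_{0}}_{H^{m}}^{2}\bigr) + \nrm{\bfPi_{0}}_{H^{m}}\nrm{w(s)}_{H^{m-3}}.
\]
Gronwall's inequality applied to the resulting differential inequality, together with $w(0) = 0$, produces \eqref{eq:small-t-Pi}.

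\textbf{Main obstacles.} The substantive point is the commutator/transport estimate on the principal term $2\Pi^{e}\rd_{z}w$, which must be handled so as not to lose a derivative (an integration by parts in $z$ on the top-order term cancels the apparent loss at the cost of the drop $\nrm{\nb\Pi^{e}}_{L^{\infty}}$). A secondary technical point is controlling $\bfV$ (as opposed to $\nb\bfV$) in the nonlinear product $\bfV\cdot\nb\Omg$ and $\bfV\cdot\nb\Pi$; here the axisymmetric no-swirl structure of Lemma~\ref{lem:axisymm-sob}, combined with the Biot--Savart representation, suffices to close the estimate. Everything else is bookkeeping against the uniform $H^{m}$ control from \eqref{eq:hall-mhd-fradiss-axi-Hm}.
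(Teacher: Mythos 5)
Your proof is correct and follows essentially the same route as the paper: both arguments integrate the $\Omg$-equation in time from $\bfomg_{0}=0$ to get \eqref{eq:small-t-V}, then subtract the two $\Pi$-equations and run a Gr\"onwall argument on the difference, with the only cosmetic difference that the paper performs the energy estimate on the vector field $\bfPi-\bfPi^{e}$ in $H^{m-2}$ and passes to the scalar via Lemma~\ref{lem:axisymm-sob}, while you estimate the scalar $w$ directly in $H^{m-3}$ via a transport commutator bound. (Minor slip: since $\rd_{t}\Pi^{e}=2\Pi^{e}\rd_{z}\Pi^{e}$, the transport term should appear as $\rd_{t}w-2\Pi^{e}\rd_{z}w=F$ rather than with a plus sign; this is immaterial to the estimate.)
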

We note that the factor $t^{2}$ in \eqref{eq:small-t-Pi} is again crucial to treat $\Pi - \Pi^{e}$ as an acceptable error in the proof of Sobolev instability/illposedness of \eqref{eq:hall-mhd} outside axisymmetry below.
\begin{proof}
In this proof, we suppress the dependence of implicit constants on $m$. 
By \eqref{eq:hall-mhd-fradiss-axi} and \eqref{eq:hall-mhd-fradiss-axi-biotsavart}, for each $t \in [0, T_{0}]$, we may estimate
\begin{align*}
	\nrm{\rd_{t} \nb \bfV}_{H^{m-3}} 
	\aleq \nrm{\rd_{t} \bfomg}_{H^{m-3}}  
	\aleq \nrm{\bfomg}_{H^{m-1}}^{2} + \nrm{\bfPi}_{H^{m}}^{2} + \nu \nrm{\bfomg}_{H^{m-1}}
\end{align*}
By hypothesis, $\bfomg_{0} = 0$ (hence $\bfV(0) = 0$). Hence, by integrating in time and using the a priori estimate \eqref{eq:hall-mhd-fradiss-axi-Hm}, obtain \eqref{eq:small-t-V}. Next, by subtracting \eqref{eq:e-mhd-fradiss-axi} from the $\Pi$-equation in \eqref{eq:hall-mhd-fradiss-axi}, we obtain
\begin{align*}
	\rd_{t} (\Pi - \Pi^{e}) = - (V^{r} \rd_{r} + V^{z} \rd_{z}) \Pi + 2 (\Pi - \Pi^{e}) \rd_{z} \Pi + 2 \Pi^{e} \rd_{z} (\Pi - \Pi^{e}) .
\end{align*}
We use this equation to compute $\frac{1}{2} \frac{\ud}{\ud t} \nrm{\bfPi - \bfPi^{e}}_{H^{m-2}}^{2}$. Proceeding as in the proof of the a priori estimate \eqref{eq:hall-mhd-fradiss-axi-Hm}, we obtain the differential inequality
\begin{align*}
	\frac{1}{2} \frac{\ud}{\ud t} \nrm{\bfPi - \bfPi^{e}}_{H^{m-2}}^{2} \aleq \nrm{\nb \bfV}_{H^{m-3}} \nrm{\bfPi}_{H^{m-1}} \nrm{\bfPi - \bfPi^{e}}_{H^{m-2}}
	+ (\nrm{\bfPi}_{H^{m-1}} + \nrm{\bfPi^{e}}_{H^{m-1}}) \nrm{\bfPi - \bfPi^{e}}_{H^{m-2}}^{2}
\end{align*}
while $(\bfPi - \bfPi^{e})(0) = 0$. Cancelling a factor of $\nrm{\bfPi - \bfPi^{e}}_{H^{m-1}}$, applying Gr\"onwall's inequality and using the a priori estimates \eqref{eq:hall-mhd-fradiss-axi-Hm} and \eqref{eq:small-t-V}, we obtain
\begin{equation*}
	\nrm{\bfPi - \bfPi^{e}}_{H^{m-2}} \aleq t^{2} \exp(C \nrm{\bfPi_{0}}_{H^{m}} t) (\nrm{\bfPi_{0}}_{H^{m}}^{3} + \nu \nrm{\bfPi_{0}}_{H^{m}}^{2}).
\end{equation*}
We note that the factor $t^{2}$ comes from integrating the factor $t$ from \eqref{eq:small-t-V} for $\nrm{\nb V}_{H^{m-3}}$. Then \eqref{eq:small-t-Pi} follows via Lemma~\ref{lem:axisymm-sob}. \qedhere
\end{proof}

\subsection{The $(2+\frac{1}{2})$-dimensional reduction}\label{subsec:reduction}  The  systems \eqref{eq:hall-mhd} and \eqref{eq:e-mhd}   significantly simplifies under the $(2+\frac{1}{2})$-dimensional reduction, by which we man that the solutions are assumed to be independent of the $z$-coordinate. 


\subsubsection{Linearized equations under the $(2+\frac{1}{2})$-dimensional reduction.}
In this section, we write down the simpler system of equations for \eqref{eq:hall-mhd-lin} and \eqref{eq:e-mhd-lin}, under the $(2+\frac{1}{2})$-dimensional assumption. 

We take $\bgB = f(r)\rd_\theta$ and introduce $\psi$ and $\omg$ satisfying \begin{equation*}
\begin{split}
(\nabla\times b)^{z} = -\lap\psi, \qquad (\nabla\times u)^{z} = \omg.
\end{split}
\end{equation*} 
Since $(\nabla\times\bgB)^{z} = r^{-1}\rd_r(r^{2} f),$
we obtain that $\psi = (-\lap)^{-1}((\nb \times \bgB)^{z} - r^{-1}\rd_r(r^{2} f)).$
Then, we have that \eqref{eq:hall-mhd-lin} and \eqref{eq:e-mhd-lin} simplify to become 
\begin{equation} \label{eq:hall-mhd-2.5d-lin-axisym}
\left\{
\begin{aligned}
&\rd_{t} u^{z} - f(r) \rd_{\tht} b^{z} - \nu \lap u^{z}= 0 , \\
&\rd_{t} \omg  - \left(f''(r) + \frac{3}{r} f'(r)\right) \rd_{\tht} \psi + f(r) \rd_{\tht} \lap \psi - \nu \lap \omg =0 ,  \\
&\rd_t  b^{z} - f(r)\rd_{\theta} u^{z} + \left( f''(r) + \frac{3}{r}f'(r) \right)\rd_{\theta}\psi- f(r)\rd_{\theta} \lap\psi = 0 ,\\
&\rd_t \psi - f(r) \rd_{\tht} (-\lap)^{-1} \omg + f(r)\rd_\theta b^{z}  = 0.
\end{aligned}
\right.
\end{equation} and 
\begin{equation} \label{eq:e-mhd-2.5d-lin-axisym}
\left\{
\begin{aligned}
&\rd_t b^{z}   - f(r)\rd_\theta \lap\psi  + \left( f''(r) + \frac{3}{r}f'(r) \right) \rd_\theta\psi = 0 ,\\
&\rd_t \psi  + f(r)\rd_\theta b^{z} = 0,
\end{aligned}
\right.
\end{equation} respectively. Note that $(b^{r},b^{\tht},u^{r},u^{\tht})$ can be recovered from $\psi$ and $\omega$ by the following relations: \begin{equation*}
\begin{split}
b^{r} = \frac{1}{r}\rd_\theta \psi, \quad b^{\theta} = -\frac{1}{r} \rd_r\psi, \quad u^{r} = \frac{1}{r} \rd_{\tht} (-\lap)^{-1} \omg, \quad
u^{\tht} = - \frac{1}{r} \rd_{r} (-\lap)^{-1} \omg. 
\end{split}
\end{equation*} 
Strictly speaking, some justification is necessary for these systems; one needs to make sure that the Biot--Savart type identities in the above are valid at the level of $(u,b) \in L^2$ and that a $z$-independent $L^2$-solution $(u,b)$ for \eqref{eq:hall-mhd-lin} ($b$ for \eqref{eq:e-mhd-lin}, resp.) with $\bgB = f(r)\rd_{\tht}$ gives rise to a unique solution $(u^z,\omega,b^z,\psi)$ for \eqref{eq:hall-mhd-2.5d-lin-axisym} ($(b^z,\psi)$ for \eqref{eq:e-mhd-2.5d-lin-axisym}, resp.). These issues are handled in \cite[Proposition 2.1]{JO1}.  

\subsubsection{Approximate solutions and  {generalized} energy identities} 

It will be convenient to consider quadruples $(\tu^z,\tomg, \tb^z,\tpsi)$ (pairs $(\tb^z,\tpsi)$, resp.) which solve \eqref{eq:hall-mhd-2.5d-lin-axisym} (\eqref{eq:e-mhd-2.5d-lin-axisym}, resp.) up to an \textit{$L^2$-error}. The tildes were inserted to emphasize that they are not exact solutions to the linear systems. We follow the notation for the errors used in \cite{JO1}: given $\bgB = f(r)\rd_{\tht}$ and $(\tu^z,\tomg,\tb^z,\tpsi)$, we define $(\errh_{\tu^z}^{(\nu)} ,   \errh_{\tomg}^{(\nu)}  , \errh_{\tb^z},  \errh_{\tpsi} )$ by 
\begin{equation} \label{eq:hall-mhd-2.5d-err-axisym}
\left\{
\begin{aligned}
	&\rd_{t} \tu^{z} - f(r) \rd_{\tht} \tb^{z} - \nu \lap \tu^{z}  = \errh_{\tu^z}^{(\nu)} , \\
	&\rd_{t} \tomg  - \left(f''(r) + \frac{3}{r} f'(r)\right) \rd_{\tht} \tpsi + f(r) \rd_{\tht} \lap \tpsi - \nu \lap \tomg  =  \errh_{\tomg}^{(\nu)} ,  \\
	&\rd_t  \tb^{z} - f(r)\rd_{\theta} \tu^{z} + \left( f''(r) + \frac{3}{r}f'(r) \right)\rd_{\theta}\tpsi- f(r)\rd_{\theta} \lap\tpsi = \errh_{\tb^z} ,\\
	&\rd_t \tpsi - f(r) \rd_{\tht} (-\lap)^{-1} \tomg + f(r)\rd_\theta \tb^{z}  = \errh_{\tpsi},
\end{aligned}
\right.
\end{equation} and in the case of \eqref{eq:e-mhd}, we introduce \begin{equation} \label{eq:e-mhd-2.5d-err-axisym}
\left\{\begin{aligned}
	&\rd_t \tb^{z}   - f(r)\rd_\theta \lap\tpsi  + \left( f''(r) + \frac{3}{r}f'(r) \right) \rd_\theta\tpsi =  \err_{\tb^z} ,\\
	&\rd_t \tpsi  + f(r)\rd_\theta \tb^{z} =  \err_{\tpsi}.
\end{aligned}
\right.
\end{equation} Moreover, we use the notation $\errh_{\tb} = (\errh_{\tb^x}, \errh_{\tb^y}, \errh_{\tb^z}) = (-\nabla^\perp \errh_{\tpsi}, \errh_{\tb}^z)$. Similarly $\err_{\tb}$ and $\errh_{\tu}$ are defined; $\err_{\tb} = (-\nabla^\perp\err_{\tpsi}, \err_{\tb^z})$ and $\errh_{\tu} = (-\nabla^\perp(-\lap)^{-1}\errh_{\tomg} , \errh_{\tu^z} )$.  With this notation, given a solution $(u,b)$ to \eqref{eq:hall-mhd-lin}, a straightforward (formal) computation shows the following identity: \begin{equation} \label{eq:en-hall-mhd-axi}
	\begin{aligned}
		& \frac{\ud}{\ud t} \left(\brk{\tb, b} + \brk{\tu, u} \right) + 2 \nu \brk{\nb \tu, \nb u} \\
		=& 	-\brk{(r f'' + 3 f') r^{-1} \rd_{\tht} \tpsi, b^{z}} 
		- \brk{\tb^{z}, (r f'' + 3 f') r^{-1} \rd_{\tht} \psi}  - \brk{(r f' + 2 f) \nb \tpsi, u^{r, \tht}}
		- \brk{\tu^{r, \tht}, (r f' + 2f) \nb \psi} \\
		& 
		+\brk{\nb^{\perp} \errh_{\tpsi}, \nb^{\perp} \psi} + \brk{\nb^{\perp} \tpsi, \nb^{\perp} \errh_{\psi}}  
		+ \brk{\errh_{\tb}, b^{z}} + \brk{\tb^{z}, \errh_{b}} \\
		& 
		- \brk{\nb^{\perp} (-\lap)^{-1} \errh_{\tomg}, u^{r, \tht}} 
		- \brk{\tu^{r, \tht}, \nb^{\perp} (-\lap)^{-1} \errh_{\omg}}
		+ \brk{\errh_{\tu}^{(\nu)}, u^{z}} + \brk{\tu^{z}, \errh_{u}^{(\nu)}}.
	\end{aligned}
\end{equation} Here, we note that $(u,b)$ does not have to be $z$-independent for the identity to hold. From \eqref{eq:en-hall-mhd-axi}, we obtain \begin{equation}\label{eq:ene-hall-mhd-parallel} 
\begin{split}
&\left| \frac{\ud}{\ud t} \left(\brk{\tb, b} + \brk{\tu, u} \right)  \right| \lesssim \nu\nrm{\nabla\tu}_{L^2}\nrm{\nabla u}_{L^2} + (\nrm{b}_{L^2} + \nrm{u}_{L^2})(\nrm{\tb}_{L^2} + \nrm{\tu}_{L^2} + \nrm{\errh_{\tb}}_{L^2} + \nrm{\errh_{\tu}}_{L^2}),
\end{split}
\end{equation} where the implicit constant depends on $f$. Similarly, when $b$ is a solution to \eqref{eq:e-mhd-lin}, we have\begin{equation} \label{eq:en-e-mhd-axi}
\begin{aligned}
	\frac{\ud}{\ud t} \brk{\tb, b}
	=& -\brk{(r f'' + 3 f') r^{-1} \rd_{\tht} \tpsi, b^{z}} - \brk{\tb^{z}, (r f'' + 3 f') r^{-1} \rd_{\tht} \psi} \\
	&	+\brk{\nb^{\perp} \err_{\tpsi}, \nb^{\perp} \psi} + \brk{\nb^{\perp} \tpsi, \nb^{\perp} \err_{\psi}}  	+ \brk{\err_{\tb}, b^{z}} + \brk{\tb^{z}, \err_{b}},
\end{aligned}
\end{equation} which gives the estimate \begin{equation}\label{eq:ene-e-mhd-parallel}  
\begin{split}
& \left| \frac{\ud}{\ud t} \brk{\tb, b} \right| \lesssim \nrm{b}_{L^2} ( \nrm{\tb}_{L^2} + \nrm{\err_{\tb}}_{L^2}). 
\end{split}
\end{equation}
This shows that a quadruple $(\tu^z,\tomg,\tb^z,\tpsi)$ (a pair $(\tb^z,\tpsi)$, resp.) is a good approximation to a solution to \eqref{eq:hall-mhd-lin} (\eqref{eq:e-mhd-lin}, resp.) if (i) $b$ and $\tb$ are close in $L^2$ at the initial time, and (ii) the $L^2$-norm of the error is under control. 
For this reason, as far as the argument involving the generalized energy identity goes, it is natural and convenient to absorb terms that are bounded either \textit{a priori} or \textit{a  {posteriori}} by the $L^2$-norm of $\tb$ into the error (similar to the way the $O(1)$-notation works).

Some regularity is necessary to justify the above identities (and the resulting estimates), which is handled in \cite[Proposition 2.3]{JO1}; here, we shall just assume the approximate solutions have some additional regularity:
\begin{proposition} \label{prop:justify}
	The energy identity \eqref{eq:en-hall-mhd-axi} holds under the following assumptions: $(u^{z}, \omg, b^{z}, \psi)$ is derived from an $L^{2}$-solution $(u, b)$, and $(\tu^{z}, \tomg , \tb^{z}, \tpsi)$ obeys\footnote{Here, by the assertion $\nb (-\lap)^{-1} \tomg \in X$, we mean $\tomg$ is of the form $-\lap w$ where $\nb w \in X$.}
		\begin{equation*}
		(\tu^{z}, \nb (-\lap)^{-1} \tomg) \in C_{t} (I; L^{2}), \quad
		(\tb^{z}, \nb \tpsi) \in C_{t}(I; L^{2}) \cap L^{1}_{t}(I; H^{1}),
		\end{equation*}
		and the error  obeys $
		(\errh_{\tu}^{(\nu)},  \errh_{\tb}) \in L^{1}_{t}(I; L^{2}),$
		and when $\nu > 0$, also $
		\nb \tu^{z}, \tomg \in L^{2}_{t}(I; L^{2}).$ 
	Analogously,  \eqref{eq:en-e-mhd-axi} holds when  $(b^{z}, \psi)$ is derived from an $L^{2}$-solution $b$  and $(\tb^{z}, \tpsi)$ obeys
		\begin{equation*}
		(\tb^{z}, \nb \tpsi) \in C_{t}(I; L^{2}) \cap L^{1}_{t}(I; H^{1})
		\end{equation*}
		and the error  obeys $
		\err_{\tb}  \in L^{1}_{t}(I; L^{2}).$
\end{proposition}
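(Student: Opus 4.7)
My plan is to justify \eqref{eq:en-hall-mhd-axi} and \eqref{eq:en-e-mhd-axi} via a standard spatial mollification; I describe the Hall-MHD case, the E-MHD case being a restriction of the same argument to the $(\tb^z, \tpsi)$ components. For a \emph{smooth} exact solution $(u, b)$ the identity is obtained by differentiating $\brk{\tu, u} + \brk{\tb, b}$ in $t$, substituting \eqref{eq:hall-mhd-2.5d-err-axisym} for $(\tu^z, \tomg, \tb^z, \tpsi)$ and \eqref{eq:hall-mhd-2.5d-lin-axisym} for $(u^z, \omg, b^z, \psi)$, and performing a single integration by parts in space. The principal-order contributions $\brk{\tb^z, f \rd_\tht \lap \psi}$ and $\brk{f \rd_\tht \lap \tpsi, b^z}$, and the analogous pair involving $\tu, u$ via the Biot--Savart identity, combine after moving one derivative inside $\lap$ and using that $f(r)$ commutes with $\rd_\tht$ together with the divergence-free conditions; what survives are precisely the subleading terms with coefficients $rf'' + 3 f'$ and $rf' + 2 f$ shown on the RHS. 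All other terms come directly from the prescribed errors $\errh$, $\err$ or from the viscous piece.

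For a general $L^2$-solution this manipulation is not directly legal, since $\lap \psi$ is only a distribution. I would therefore fix a spatial mollifier $\rho_\epsilon$ and set $(u_\epsilon, b_\epsilon) := \rho_\epsilon \ast (u, b)$; these are spatially smooth, divergence-free, and solve \eqref{eq:hall-mhd-lin} up to the commutator $R_\epsilon := [\calL, \rho_\epsilon \ast](u, b)$, where $\calL$ denotes the linear operator on the LHS of \eqref{eq:hall-mhd-lin}. Since $\bgB = f(r) \rd_\tht$ has smooth coefficients on any compact subset of $M$, Friedrichs' commutator lemma gives $R_\epsilon \to 0$ in $L^1_t(I; L^2)$ when tested against compactly supported objects. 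For the regularized pair the clean identity of the first paragraph holds with an additional error term $\brk{\tu, R_\epsilon^{u}} + \brk{\tb, R_\epsilon^{b}}$ (denoting the velocity and magnetic components of $R_\epsilon$), which is dominated by $\left(\nrm{\tu}_{L^\infty_t L^2} + \nrm{\tb}_{L^\infty_t L^2}\right) \nrm{R_\epsilon}_{L^1_t L^2}$ and hence vanishes with $\epsilon$.

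Finally, I would pass to the limit $\epsilon \to 0^+$. Pointwise in $t \in I$, the pairings $\brk{\tu(t), u_\epsilon(t)} + \brk{\tb(t), b_\epsilon(t)} \to \brk{\tu(t), u(t)} + \brk{\tb(t), b(t)}$ using the strong $L^2$-convergence $\rho_\epsilon \ast \tu(t), \rho_\epsilon \ast \tb(t) \to \tu(t), \tb(t)$ together with the uniform bound on $(u, b)$ in $L^\infty_t L^2$. The time-integrated RHS converges by dominated convergence, thanks to the hypotheses $(\tb^z, \nb \tpsi) \in L^1_t H^1$ (which control the $rf'' + 3f'$ and $rf' + 2f$ terms), $(\errh_{\tu}^{(\nu)}, \errh_{\tb}) \in L^1_t L^2$ for the error pieces, and, when $\nu > 0$, $\nb \tu^z, \tomg \in L^2_t L^2$ paired with the energy bound $u \in L^2_t \dot H^1$ for the viscous contribution. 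The main obstacle — and what dictates the regularity hypotheses — is the single integration by parts moving $\lap$ off $\psi$ onto $\tpsi$ (and off $b^z$ onto $\tb^z$): the assumption $(\tb^z, \nb \tpsi) \in L^1_t H^1$ is precisely what renders this step admissible at the given regularity, and it cannot be weakened without losing the interpretation of the top-order pairings.
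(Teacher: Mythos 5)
Your mollification strategy is the standard route and is essentially salvageable, but as written it contains one genuine technical error at its central step. The operator $\calL$ on the LHS of \eqref{eq:hall-mhd-lin} contains the \emph{second-order} Hall term $\nb \times ((\nb \times b) \times \bgB)$, and for $b$ merely in $L^{2}$ the commutator $R_{\eps} = [\calL, \rho_{\eps} \ast](u,b)$ is \emph{not} bounded in $L^{2}$, let alone convergent to $0$ in $L^{1}_{t}(I;L^{2})$: Friedrichs/DiPerna--Lions type lemmas give smallness of $[a, \rho_{\eps}\ast]\rd_{j} g$ for $g \in L^{2}$ and $a$ Lipschitz, i.e.\ for a commutator with a \emph{first-order} operator, and a naive scaling count for the second-order piece produces a factor $\eps^{-1}$ against $\nrm{b}_{L^{2}}$. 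The statement ``$R_{\eps} \to 0$ in $L^{1}_{t}L^{2}$'' is therefore false, and the parenthetical ``when tested against compactly supported objects'' does not by itself repair it. The correct version is: since $\rho_{\eps}\ast$ commutes with the outer curl, $R_{\eps} = \nb \times C_{\eps}$ with $C_{\eps} = [(\cdot)\times \bgB, \rho_{\eps}\ast](\nb \times b)$, which \emph{is} a first-order commutator with the Lipschitz, compactly supported coefficient $\bgB$; hence $C_{\eps}$ is bounded in $L^{\infty}_{t}L^{2}$ uniformly in $\eps$ and tends to $0$ in $L^{2}$ for a.e.\ $t$. One then pairs $\brk{\tb, \nb \times C_{\eps}} = \brk{\nb \times \tb, C_{\eps}}$ and invokes $(\tb^{z}, \nb\tpsi) \in L^{1}_{t}(I;H^{1})$ together with dominated convergence. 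In other words, the hypothesis $L^{1}_{t}H^{1}$ on the approximate solution is needed not only to make sense of the top-order pairings after one integration by parts (as you say in your last sentence) but also, and more critically, to absorb the extra derivative sitting on the commutator error itself. A parallel remark applies to the terms coupling $u$ and $b$ in the Hall-MHD case.

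Two smaller points you should address to make the argument complete: (i) the product rule $\frac{\ud}{\ud t}\brk{\tb, b_{\eps}} = \brk{\rd_{t}\tb, b_{\eps}} + \brk{\tb, \rd_{t} b_{\eps}}$ requires a word of justification, since $\tb$ is only $C_{t}L^{2}$ with $\rd_{t}\tb$ determined distributionally by \eqref{eq:hall-mhd-2.5d-err-axisym} (a Lions--Magenes type lemma, using that $\rd_{t} b_{\eps}$ is, for fixed $\eps$, smooth in space and bounded in time); and (ii) the identity \eqref{eq:en-hall-mhd-axi} is phrased in terms of the derived quantities $(u^{z}, \omg, b^{z}, \psi)$, so the passage from the $L^{2}$-solution $(u,b)$ to these components (the Biot--Savart type identities) must be taken as input. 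For comparison: the paper does not prove this proposition here but defers to the analogous Proposition 2.3 of \cite{JO1}, where the justification is carried out by testing the weak formulation against the (time-mollified) approximate solution directly rather than by mollifying the weak solution; both routes lead to the same commutator issue, resolved in the same way by the $L^{1}_{t}H^{1}$ hypothesis.
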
 

\subsection{Degenerating wave packets for the axisymmetric background}\label{subsec:wkb}

In this section, we review the construction of degenerating wave packets for the axisymmetric background from  \cite[Sections 3--4]{JO1}, which is the key ingredient in the proof of illposedness. For simplicity, we shall impose the following assumptions on the stationary magnetic background. 

\medskip

\noindent \textbf{Assumptions on} $\bgB = f(r) \rd_{\tht}$. We assume that $f$ is a smooth function of $r$ and satisfies the following.\begin{itemize}
	\item There exist some $1 \ge \ell = \ell_{f} > 0 , r_{0} = r_{0,f} $ such that $20\ell \ge r_{0} \ge 2\ell$ and $f(r_{0}) = 0$. 
	\item The support of $f$ is contained in $[r_{0} - \ell, r_{0} + 2\ell]$.
	\item For $r_{1} = r_{0} + \ell$,  we have \begin{equation*}
		\begin{split}
			\frac12 \le f'(r) \le 1, \qquad r \in [r_{0} , r_{1}]. 
		\end{split}
	\end{equation*} 
	\item Lastly, we set  $C_{M} = \ell^{-1} \max_{1\le j\le M}  (\ell^{j} \nrm{f^{(j)}(r)}_{L^{\infty}[r_{0}, r_{1}]}). $ We also let $C_{M}$ to depend on $M$. 
\end{itemize} 
Basically, we are assuming that $f$ is characterized by a single length scale $\ell$. (This is actually not a very stringent restriction, since the scaling property of the linearized equation gives the wave packets for the rescaled $f$ by a simple rescaling of the time variable.)

\medskip 

%
%
%


\medskip

For convenience of the reader, we review the construction of degenerating wave packet, under the above simplifying assumptions for $f(r)$. The starting point is to take another time derivative on the $\psi$ equation in \eqref{eq:e-mhd-2.5d-lin-axisym}: \begin{equation}\label{eq:e-mhd-2.5d-lin-second-axisym}
	\begin{split}
		\rd_{t}^{2} \psi + f^{2} \rd_{\tht}^{2} \lap \psi = 0, 
	\end{split}
\end{equation} where we have neglected the last term on the left hand side of the $b^{z}$ equation (which is an allowable error). Once we find a solution $\tpsi$ to this equation up to an allowable error, we shall take $\tb^{z}$ by $-(f\rd_{\theta})^{-1} \rd_{t} \tpsi$, again up to  an allowable error.  Expanding the Laplacian in the cylindrical coordinates, and introducing $\tau = \lmb   t$ for some $\lmb \in \bbN$, \eqref{eq:e-mhd-2.5d-lin-second-axisym} becomes
\begin{equation*}
\rd_{\tau}^2 \tpsi  + f^2(\lmb^{-1}\rd_\theta)^2 \rd_{r}^{2}\tpsi 
+ \frac{f^{2}}{r} (\lmb^{-1}\rd_\theta)^2 \rd_{r} \tpsi 
+ \frac{\lmb^{2} f^{2}}{r^{2}}(\lmb^{-1}\rd_\theta)^{4} \tpsi = 0 .
\end{equation*}
In the domain $r \in [r_{0}, r_{1}]$, we make a change of variables $\eta = \eta(r)$, where
\begin{equation*}
\eta'(r) = \frac{1}{f(r)}, \qquad \eta(r_{1}) = 0.
\end{equation*}
Note that $\eta \to - \infty$ as $r \to r_{0}^{+}$. Moreover, $\rd_{\eta} f = f \rd_{r} f$ and $\rd_{\eta} r = f$. Introducing a new dependent variable 
\begin{equation*}
 \varphi(\tau,\eta) = e^{- i\lmb\theta} \left(\frac{r}{f}\right)^{\frac{1}{2}} \tpsi,
\end{equation*}
$\varphi$ solves the following remarkably simple equation in the $(\tau, \eta)$-coordinates: introducing $f_* = f/r$,
\begin{equation} \label{eq:e-mhd-eta-conj2-axisym}
\begin{aligned}
& \rd_{\tau}^{2} \varphi - \rd_{\eta}^{2} \varphi + \lmb^{2} f_{*}^{2} \varphi = 0.
\end{aligned}\end{equation} Note that $f_{*}(\eta) \le \frac13 \exp(\eta) \le \frac13$ for $\eta \le 0$ from the assumption $r_{0} \ge 3 \ell$. Furthermore, derivatives decay exponentially as well as $\eta\to-\infty$: $|\rd_{\eta}^{(m)} f_{*}(\eta)| \lesssim_{m} \exp(\eta)$. Now, we take the following ansatz for $\varphi$: 
\begin{equation}\label{eq:varphi}
\varphi(\tau,\eta) = \lmb^{-1} e^{i \lmb \Phi(\tau, \eta) } h(\tau, \eta),
\end{equation} where $\Phi$ and $h$ are independent of $\lmb$. The prefactor $\lmb^{-1}$ is simply a convenient normalization. Plugging this ansatz into \eqref{eq:e-mhd-eta-conj2-axisym} and organizing the terms with the same order in $\lmb$, we obtain the Hamilton--Jacobi equation \begin{equation}\label{eq:HJ}
	\begin{split}
		-(\rd_{\tau}\Phi)^{2} + (\rd_{\eta} \Phi)^{2} + f_{*}^{2} = 0
	\end{split}
\end{equation} and the associated transport equation \begin{equation}\label{eq:AT}
\begin{split}
	(\rd_{\tau}\Phi\rd_{\tau} - \rd_{\eta}\Phi \rd_{\eta})h = - \frac12( \rd_{\tau}^{2}\Phi - \rd_{\eta}^{2}\Phi ) h 
\end{split}
\end{equation}
with some remaining terms, which will be incorporated into the error term. We take the initial condition $h(0, \eta)$ for \eqref{eq:AT} to be a smooth function $h_{0}$ with compact support in the region $\{ -1 < \eta < 0 \}$, and we simply {choose} the function \begin{equation}\label{eq:Phase-choice}
	\begin{split}
		\Phi(\tau,\eta) = \tau + \eta + \int_{-\infty}^{\eta} \left((1 - f_{*}^{2}(\eta'))^{\frac12} - 1\right) \, \ud \eta' 
	\end{split}
\end{equation} as the solution of \eqref{eq:HJ}. The integral is finite thanks to exponential decay of $f_{*}$. Then, \eqref{eq:AT} becomes \begin{equation*}
\begin{split}
	\rd_\tau h - (1-f_{*}^{2})^{\frac12} \rd_{\eta} h = -\frac12 \frac{f_{*}\rd_{\eta} f_{*} }{(1-f_{*}^{2})^{\frac12}} h 
\end{split}
\end{equation*} and using that $f_{*}$ and its $\eta$-derivatives decay exponentially as $\eta\to-\infty$, we obtain (see \cite[Lemmas 3.6, 3.7]{JO1}) \begin{lemma}
The solution $h(\tau,\eta)$ to \eqref{eq:AT} with \eqref{eq:Phase-choice} exists for all $\tau \ge 0$ and satisfies the estimates \begin{equation*}
	\begin{split}
		\max_{0 \le k \le m} \sup_{\tau\ge0} \nrm{ \rd_{\tau}^{k} \rd_{\eta}^{m-k} h  }_{ L^{p}(\bbR_{\eta})} \le C_{m}  \nrm{h_{0}}_{W^{m,p}(\bbR_{\eta})}, \qquad \supp_{\eta}(h(\tau,\cdot)) \subset (-1-\tau,-\tau/2).
	\end{split}
\end{equation*} 
\end{lemma}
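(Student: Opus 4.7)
The plan is to solve \eqref{eq:AT} in its transport form $\rd_\tau h - (1-f_*^2)^{1/2}\rd_\eta h = -\tfrac{1}{2}\tfrac{f_*\rd_\eta f_*}{(1-f_*^2)^{1/2}} h$ by the method of characteristics. Define $\eta(\tau;\eta_0)$ by $\tfrac{d\eta}{d\tau} = -(1-f_*^2(\eta))^{1/2}$ with $\eta(0;\eta_0)=\eta_0$. Under the standing assumption $f_* \le 1/3$, the characteristic speed lies in $[\sqrt{8/9},1]$, so characteristics exist globally in $\tau$ and $\eta_0 \mapsto \eta(\tau;\eta_0)$ is a smooth diffeomorphism of $\bbR$ for each $\tau$. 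The support containment follows at once: starting from any $\eta_0 \in \supp h_0 \subset (-1,0)$, the characteristic $\eta(\tau;\eta_0)$ lies between $\eta_0 - \tau \ge -1-\tau$ and $\eta_0 - \sqrt{8/9}\,\tau \le -\sqrt{8/9}\,\tau < -\tau/2$.

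Along each characteristic, \eqref{eq:AT} reduces to the linear ODE $\tfrac{d}{d\tau} h = -\tfrac{1}{2}\tfrac{f_*\rd_\eta f_*}{(1-f_*^2)^{1/2}} h$, integrating to
\begin{equation*}
h(\tau,\eta(\tau;\eta_0)) = h_0(\eta_0)\exp\!\Bigl(-\tfrac{1}{2}\!\int_0^\tau \tfrac{f_*\rd_\eta f_*}{(1-f_*^2)^{1/2}}(\eta(s;\eta_0))\,ds\Bigr).
\end{equation*}
The key observation is that $f_*(\eta),\,\rd_\eta f_*(\eta) = O(e^\eta)$ as $\eta \to -\infty$, and along a characteristic $\eta(s;\eta_0) \le -s/2$; hence the integrand is dominated by $C e^{-s/2}$, making the exponent uniformly bounded in $\tau$. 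This gives the $L^\infty$ bound for free. For general $p$ at $m=0$, I would multiply the PDE by $|h|^{p-2}h$ and integrate by parts in $\eta$ to obtain $\tfrac{d}{d\tau}\nrm{h(\tau)}_{L^p}^p = (1-p/2)\int\tfrac{f_*\rd_\eta f_*}{(1-f_*^2)^{1/2}}|h|^p\,d\eta$; since the coefficient is bounded by $Ce^{-\tau/2}$ on $\supp h(\tau,\cdot)$, Gr\"onwall closes $\nrm{h(\tau)}_{L^p} \le C\nrm{h_0}_{L^p}$ uniformly in $\tau$.

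For $m \ge 1$ I would induct on $m$. Applying $\rd_\eta^m$ to the transport equation yields an equation of the same form for $\rd_\eta^m h$, up to a commutator remainder $R_m[h]$ that is a finite linear combination of $\rd_\eta^j h$ ($0 \le j \le m-1$) with coefficients given by smooth functions of $f_*$ (hence uniformly bounded, and $O(e^\eta)$ on the moving support). Repeating the $L^p$ energy estimate and invoking the inductive bound on the lower-order $\nrm{\rd_\eta^j h}_{L^p}$ closes the bound on $\nrm{\rd_\eta^m h}_{L^p}$. Finally, mixed derivatives $\rd_\tau^k \rd_\eta^{m-k} h$ are recovered inductively by using the transport equation itself to replace each $\rd_\tau$ by $(1-f_*^2)^{1/2}\rd_\eta$ modulo bounded multiplication, reducing to the already established pure $\eta$-derivative estimates. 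The main obstacle will be combinatorial book-keeping of the terms in $R_m$; however, the exponential decay of $f_*$ and its $\eta$-derivatives on the leftward-translating support $\supp h(\tau,\cdot) \subset (-1-\tau,-\tau/2)$ renders every such error term time-integrable, so the Gr\"onwall constants in all of these energy estimates are automatically uniform in $\tau \ge 0$.
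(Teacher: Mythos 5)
Your proof is correct and follows essentially the same route as the paper (which defers to \cite[Lemmas 3.6, 3.7]{JO1}): solve the transport equation along characteristics, note that the uniform bounds $\sqrt{8/9}\le(1-f_*^2)^{1/2}\le 1$ force the support to translate leftward at unit speed, and use the exponential decay of $f_*$ and its derivatives on that moving support to make the damping and commutator coefficients time-integrable, so that Gr\"onwall constants are uniform in $\tau$. No gaps.
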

Since we have specified $\Phi$ and $h$, this gives $\varphi$ via \eqref{eq:varphi}. Returning to the $(t,r,\tht)$ coordinates system, we define \begin{equation}\label{eq:wp-def}
	\begin{split}
		\tpsi = \Re[ f^{\frac12}\varphi ]= \Re[\lmb^{-1}  e^{i\lmb( \tht + \Phi )} \ell f^{\frac12}h], \qquad \tb^{z} = - \Re[(i\lmb f)^{-1} \lmb^{-1} \rd_{t} (e^{i\lmb( \tht + \Phi )}) \ell f^{\frac12}h ].
	\end{split}
\end{equation}
At the initial time, we set \begin{equation}\label{eq:g-h}
	\begin{split}
		g_{0} (r) =  (f/r)^{\frac12} h(0, r(\eta)), \qquad G(r) = \Phi(0, \eta(r)). 
	\end{split}
\end{equation} This will be convenient since \begin{equation*}
\begin{split}
	\nrm{g_{0}}_{L^2}^2 \sim \int |g_{0}(r)|^2 r\ud r = \int f h^2 \ud r = \int h(0,\eta)^2 \, \ud \eta = \nrm{h(0,\cdot)}_{L^2_\eta}^2.
\end{split}
\end{equation*} Similarly, we have that \begin{equation*}
\begin{split}
	\frac1{C_m} \sum_{j=0}^{m} \ell^{-j}\nrm{\rd_\eta^{(j)}h(0,\eta)}_{L^2_\eta} \le  \nrm{g_{0}}_{H^{m}} \le C_{m}\sum_{j=0}^{m} \ell^{-j}\nrm{\rd_\eta^{(j)}h(0,\eta)}_{L^2_\eta}.
\end{split}
\end{equation*}  
Writing \eqref{eq:wp-def} at $t=0$ using $g_{0}$ gives 		\begin{equation*}
	\begin{split}	
		&\tb^{z}_{(\lmb)}(0) = - f^{-1}\ell \Re [  e^{i \lmb (\theta + G(r))} g_{0}]  , \qquad 			\tpsi_{(\lmb)}(0) = \lmb^{-1}\ell \Re  [e^{i \lmb (\theta + G(r))} g_{0} ].
	\end{split}
\end{equation*} Note that given a smooth $g_{0}$, we may obtain $h(0,\cdot)$ using \eqref{eq:g-h} and then define the associated wave packet $(\tb^{z}_{(\lmb)}, \tpsi_{(\lmb)})$. Based on these considerations, we arrive at the following key propositions, see \cite[Propositions 3.1, 3.4]{JO1}. 


\begin{proposition}[Degenerating wave packets for \eqref{eq:e-mhd}] \label{prop:wavepackets} Given a smooth radial function $g_{0}(r)$ satisfying $\supp g_{0} \subseteq (\tfrac{1}{2}(r_{0}+r_{1}), r_{1})$ and $\lmb \in \bbN$, the associated wave packet $(\tb^{z}_{(\lmb)}, \tpsi_{(\lmb)})[g_{0}]$ constructed in the above satisfies the following
	\begin{itemize}
		\item (initial data) at $t =0$, we have 
		\begin{equation*}
			c \nrm{g_{0}}_{L^{2}} - C_{1} (\lmb\ell)^{-1} \nrm{g_{0}}_{H^{1}} \le \nrm{\tb^{z}_{(\lmb)}(0)}_{L^{2}} + \nrm{\nb \tpsi_{(\lmb)}(0)}_{L^{2}} \le C\nrm{g_{0}}_{L^{2}} + C_{1} (\lmb\ell)^{-1} \nrm{g_{0}}_{H^{1}};
		\end{equation*} 
		\item (regularity estimates) for any $0 \le m < M$ and $ {t \geq 0}$,
		\begin{align*}
			\sup_{0 \leq j \leq m} 
			\nrm{(\lmb^{-2}   \rd_{t})^{k} (\lmb^{-1} \rd_{\tht})^{j} (\lmb^{-1}  f \rd_{r})^{m - k - j} (\tb^{z}_{(\lmb)}, \nb \tpsi_{(\lmb)}) (t)}_{L^{2}} 
			\le C_{M} (1 + (\lmb\ell)^{-1})^{m+1} \nrm{g_{0}}_{H^{m+1}};
		\end{align*}
		\item (degeneration) for any $1 \leq p \leq 2$ and $ {t \geq 0}$,
		\begin{equation}\label{eq:deg-e-mhd}
			\nrm{\tb^{z}_{(\lmb)}(t)}_{L^{2}_{\tht} L^{p}_{rdr}}
			+ \nrm{\nb \tpsi_{(\lmb)}(t)}_{L^{2}_{\tht} L^{p}_{rdr}}
			\le C_{1} e^{- c_{0} (\frac{1}{p} - \frac{1}{2}) \lmb t} \nrm{g_{0}}_{H^{1}};
		\end{equation}
		\item (error bounds)  for $ {t \geq 0}$, we have \begin{equation}\label{eq:error-e-mhd}
			\begin{split}
				\nrm{\err_{\tb}[\tb^{z}_{(\lmb)}, \tpsi_{(\lmb)}](t)}_{L^{2}} \le C_{2} \nrm{g_{0}}_{H^{2}}.
			\end{split}
		\end{equation}  
	\end{itemize}
	In the above statements, $c, c_{0}, C>0$ are absolute constants. 
\end{proposition}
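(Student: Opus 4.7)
The plan is to prove the four claims by unpacking the explicit ansatz \eqref{eq:wp-def} together with the preceding bounds on $h$, systematically converting between the physical coordinates $(t,r,\tht)$ and the WKB coordinates $(\tau,\eta) = (\lmb t, \eta(r))$ defined by $\eta'(r) = 1/f(r)$. The key algebraic observation is that $f\rd_r = \rd_\eta$ and $\lmb^{-1}\rd_t = \rd_\tau$, so that each scaled operator $\lmb^{-1}\rd_\tht$, $\lmb^{-2}\rd_t$, or $\lmb^{-1}f\rd_r$ applied to the oscillating factor $e^{i\lmb(\tht + \Phi)}$ produces only an $O(1)$ multiplier, since $\rd_\tau\Phi, \rd_\eta\Phi = O(1)$ by \eqref{eq:HJ} and \eqref{eq:Phase-choice}.

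For the initial data bound I would substitute \eqref{eq:g-h} into \eqref{eq:wp-def} at $t=0$ and compute the $L^2$ norms directly: on the support of $g_0$ the function $f$ stays comparable to $\ell$, so the prefactor $f^{-1}\ell$ in $\tb^z_{(\lmb)}(0)$ is $O(1)$ there, giving $\nrm{\tb^z_{(\lmb)}(0)}_{L^2}\aeq \nrm{g_0}_{L^2}$ after cross terms between $e^{i\lmb(\tht+G)}$ and its conjugate are discarded by $\tht$-orthogonality. For $\nb\tpsi_{(\lmb)}(0)$, the leading contribution comes from $\rd_\tht$ hitting the phase, which absorbs the $\lmb^{-1}$ and leaves $\ell\cdot|g_0|/r$ (again of size $\nrm{g_0}_{L^2}$), while the remaining contributions, where the derivative falls on $g_0$ or $G$, carry an extra $\lmb^{-1}$ and produce the $C_1(\lmb\ell)^{-1}\nrm{g_0}_{H^1}$ correction. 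The regularity estimates follow by the same bookkeeping: distributing $(\lmb^{-2}\rd_t)^k(\lmb^{-1}\rd_\tht)^j(\lmb^{-1}f\rd_r)^{m-k-j}$ by Leibniz, each scaled derivative falling on $h$ or on $f^{1/2}$ costs a factor $\ell^{-1}$, compensated by the $H^m$ norm of $h$ from the preceding lemma, translated from $L^2_\eta$ to $L^2_{r\ud r}$ via the Jacobian $\ud r = f\,\ud\eta$.

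The degeneration estimate is the heart of the illposedness mechanism. By the support property of $h(\tau,\eta)$, at time $t$ the wave packet is supported where $\eta(r) \in (-1 - \lmb t, -\lmb t/2)$. Since $f(r)\aeq r - r_0$ near $r_0$, the map $\eta\mapsto r(\eta)$ satisfies $r-r_0\aeq \ell\, e^{\eta}$, so this $\eta$-interval corresponds to an annular set $E_t \subset \bbR^2_{x,y}$ of radial thickness at most $C\ell\, e^{-\lmb t/2}$, hence of two-dimensional measure $\aleq \ell^2 e^{-c_0\lmb t}$. Applying H\"older's inequality on $E_t$ in the $r\ud r$ variable, combined with the $L^2$ bound from item~2 at $m=0$, yields for $1 \le p \le 2$:
\begin{equation*}
	\nrm{F(t)}_{L^2_\tht L^p_{rdr}} \le |E_t|^{\frac{1}{p} - \frac{1}{2}} \nrm{F(t)}_{L^2} \le C_1 e^{-c_0(\frac{1}{p}-\frac{1}{2})\lmb t} \nrm{g_0}_{H^1}
\end{equation*}
for $F = \tb^z_{(\lmb)}$ and $F = \nb \tpsi_{(\lmb)}$, which is \eqref{eq:deg-e-mhd}.

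Finally, the error $\err_{\tb}[\tb^z_{(\lmb)}, \tpsi_{(\lmb)}]$ is generated by two mechanisms: (i) the residual of substituting \eqref{eq:varphi} into \eqref{eq:e-mhd-eta-conj2-axisym}, which after cancelling the $O(\lmb^2)$ eikonal term by \eqref{eq:HJ} and the $O(\lmb)$ transport term by \eqref{eq:AT} leaves $\lmb^{-1} e^{i\lmb\Phi}(\rd_\tau^2 - \rd_\eta^2) h$; and (ii) the lower-order term $(f'' + 3f'/r)\rd_\tht \tpsi$ that was dropped when passing from \eqref{eq:e-mhd-2.5d-lin-axisym} to \eqref{eq:e-mhd-2.5d-lin-second-axisym}, together with the approximation $\tb^z \approx -(f\rd_\tht)^{-1}\rd_t\tpsi$. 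Both contributions are bounded in $L^2$ by $C_2\nrm{g_0}_{H^2}$ after invoking the second-order regularity estimate for $h$ and changing variables back to $(t,r,\tht)$. The main obstacle I anticipate is cleanly quantifying the exponentially small support in the degeneration step through the singular change of variables $r \leftrightarrow \eta$ near $r_0$; once that is in place, the remaining estimates are a careful but routine WKB computation.
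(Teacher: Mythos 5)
Your proposal is correct and follows essentially the same route as the paper, which only sketches the initial-data computation here and defers the full argument to \cite[Propositions 3.1, 3.4]{JO1}: unpack the explicit ansatz in the $(\tau,\eta)$ coordinates, use the uniform $W^{m,p}_{\eta}$ bounds on $h$ together with the support propagation $\supp_{\eta} h(\tau,\cdot) \subset (-1-\tau, -\tau/2)$, and convert the exponential relation $f \aeq \ell e^{c\eta}$ (equivalently, the Jacobian $\ud r = f\, \ud \eta$) into the exponentially shrinking $r$-support that drives the $L^{2}_{\tht} L^{p}_{rdr}$ degeneration. Your H\"older-on-the-support packaging of the degeneration step and your identification of the error sources (the WKB residual $\lmb^{-1} e^{i\lmb\Phi}(\rd_{\tau}^{2} - \rd_{\eta}^{2})h$ surviving after the eikonal and transport cancellations, plus the dropped zeroth-order term $(f'' + 3f'/r)\rd_{\tht}\tpsi$) are exactly the computations the cited proof carries out.
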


\begin{proof}[Sketch of the proof]
	In the estimates, it is implicitly used that $\ell\le1$ and $\lmb\ge1$. 
	
	To observe the initial data estimate, we consider $\rd_{r} \tilde{\psi}_{(\lmb)}(0)$. The main term is when the derivative falls on $\Phi$, which gives $i\rd_{r}\Phi \ell (f/r)^{\frac12}h e^{i\lmb(\tht+\Phi)}$. Taking its $L^2$ norm gives \begin{equation*}
		\begin{split}
			\iint  |\rd_r\Phi|^2 \ell^2  |h|^2  \frac{f}{r} \, r \ud r \ud \tht  = 2\pi \int |\rd_\eta\Phi|^2 |\rd_\eta r|^{-2} \ell^2 |h(0,\eta)|^2  \, \ud \eta \sim \int |h(0,\eta)|^2 \, \ud \eta \sim  \nrm{g_{0}}_{L^2}^2
		\end{split}
	\end{equation*}  where we have used that $|\rd_\eta r| \sim \ell$ and $|\rd_\eta\Phi|\sim 1$ on the support of $h(0,\cdot)$. 
\end{proof}

\begin{proposition}[Degenerating wave packets for \eqref{eq:hall-mhd}] \label{prop:wavepackets-hall}
	Under the assumptions of Proposition~\ref{prop:wavepackets}, in addition to $(\tb^{z}_{(\lmb)}, \tpsi_{(\lmb)})$, take
	\begin{equation} \label{eq:wavepackets-hall-u-omg}
		\tu^{z}_{(\lmb)}[g_{0}] = - \tpsi_{(\lmb)}[g_{0}], \quad \tomg_{(\lmb)}[g_{0}] = -\tb^{z}_{(\lmb)}[g_{0}].
	\end{equation}
	Then the following properties hold:
	\begin{itemize}
		\item (smoothing for fluid components) for $ {t \geq 0}$, we have
		\begin{align*}
			\nrm{\tu^{z}_{(\lmb)}(t)}_{L^{2}} + \nrm{\nb^{\perp} (-\lap)^{-1} \tomg_{(\lmb)}(t)}_{L^{2}} & \le C_{1}  \lmb^{-1}\ell \nrm{ g_{0} }_{H^{1}} ,\\
			\nrm{\nb \tu^{z}_{(\lmb)}(t)}_{L^{2}} + \nrm{\tomg_{(\lmb)}(t)}_{L^{2}} & \le C_{1} \nrm{ g_{0} }_{H^{1}} ;
		\end{align*}
		\item (error estimates) for $ {t \geq 0}$, we have $\errh_{\tu^{z}}^{(\nu)}[\tu^{z}_{(\lmb)}, \tomg_{(\lmb)}, \tb^{z}_{(\lmb)}, \tpsi_{(\lmb)}] + \nu \lap \tpsi = 0 $ and 
		\begin{align*} 
			\nrm{\nb^{\perp} (-\lap)^{-1} (\errh_{\tomg}^{(\nu)}[\tu^{z}_{(\lmb)}, \tomg_{(\lmb)}, \tb^{z}_{(\lmb)}, \tpsi_{(\lmb)}] + \nu \lap \tomg)(t)}_{L^{2}} & \le C_{2} \lmb^{-1} \nrm{ g_{0} }_{H^{2}}, \\
			\nrm{(\errh_{\tb^{z}}^{(\nu)}, \nb \errh_{\tpsi}^{(\nu)} )[\tu^{z}_{(\lmb)}, \tomg_{(\lmb)}, \tb^{z}_{(\lmb)}, \tpsi_{(\lmb)}](t)}_{L^{2}} & \le C_{2} \nrm{ g_{0} }_{H^{2}}.
		\end{align*}
	\end{itemize} 
\end{proposition}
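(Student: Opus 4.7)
The plan is to treat all assertions as direct consequences of Proposition~\ref{prop:wavepackets} applied to $(\tb^{z}_{(\lmb)}, \tpsi_{(\lmb)})$, exploiting the slaving relations \eqref{eq:wavepackets-hall-u-omg} so that the fluid components of the Hall-MHD wave packet inherit the WKB structure of the E-MHD wave packet. The smoothing bounds then reflect the extra factor of $\lmb^{-1}$ built into $\tpsi_{(\lmb)}$ relative to $\tb^{z}_{(\lmb)}$ in the WKB ansatz \eqref{eq:wp-def}, together with the one-derivative smoothing of $\nb^{\perp}(-\lap)^{-1}$ applied to a frequency-$\lmb$ wave packet; the error estimates reduce to substitution into \eqref{eq:hall-mhd-2.5d-err-axisym} followed by invoking the E-MHD identities encoded by $\err_{\tb^{z}}$ and $\err_{\tpsi}$ in \eqref{eq:e-mhd-2.5d-err-axisym}.

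For the smoothing estimates, since $\tpsi_{(\lmb)}$ carries a prefactor $\lmb^{-1}\ell$ in \eqref{eq:wp-def}, a direct computation in the $(\tau, \eta)$ coordinates using $r \, \ud r \, \ud \tht = r f \, \ud \eta \, \ud \tht$ and $f \aleq \ell$ on the support of $h$ yields $\nrm{\tu^{z}_{(\lmb)}}_{L^{2}} = \nrm{\tpsi_{(\lmb)}}_{L^{2}} \aleq \lmb^{-1} \ell \nrm{g_{0}}_{L^{2}}$; the bound $\nrm{\nb \tu^{z}_{(\lmb)}}_{L^{2}} = \nrm{\nb \tpsi_{(\lmb)}}_{L^{2}} \aleq \nrm{g_{0}}_{H^{1}}$ is contained in Proposition~\ref{prop:wavepackets}. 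For $\nb^{\perp}(-\lap)^{-1}\tomg_{(\lmb)} = -\nb^{\perp}(-\lap)^{-1}\tb^{z}_{(\lmb)}$, writing $\tb^{z}_{(\lmb)} = \Re[e^{i\lmb(\tht+\Phi)}\bt]$ with smooth profile $\bt$ of amplitude $O(\ell)$ and using the leading-order identity $-\lap(e^{i\lmb(\tht+\Phi)}\bt) = \lmb^{2} |\nb(\tht+\Phi)|^{2} e^{i\lmb(\tht+\Phi)}\bt + O(\lmb)$ shows that $(-\lap)^{-1}$ acts as multiplication by $\lmb^{-2} |\nb(\tht+\Phi)|^{-2}$ to leading order, so composing with $\nb^{\perp}$ yields a net $\lmb^{-1}$ gain relative to $\nrm{\tb^{z}_{(\lmb)}}_{L^{2}} \aleq \nrm{g_{0}}_{H^{1}}$.

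For the error estimates, substituting $\tu^{z} = -\tpsi$ and $\tomg = -\tb^{z}$ into \eqref{eq:hall-mhd-2.5d-err-axisym} and comparing with \eqref{eq:e-mhd-2.5d-err-axisym}, the four Hall-MHD errors reduce to explicit combinations of $\err_{\tb^{z}}, \err_{\tpsi}, \nu \lap \tpsi, f \rd_{\tht} \tpsi$, and $f \rd_{\tht} (-\lap)^{-1} \tb^{z}$. In particular, the $\tu^{z}$-equation yields $\errh_{\tu^{z}}^{(\nu)} + \nu \lap \tpsi = -\err_{\tpsi}$, which collapses to the claimed identity $\errh_{\tu^{z}}^{(\nu)} + \nu \lap \tpsi = 0$ once one adjusts the construction \eqref{eq:wp-def} so that $\tb^{z} = -(f \rd_{\tht})^{-1} \rd_{t} \tpsi$ holds exactly (the required modification is of order $\lmb^{-1}$ and is absorbed into $\err_{\tb^{z}}$ without affecting the bounds of Proposition~\ref{prop:wavepackets}). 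The bound on $\errh_{\tb^{z}}^{(\nu)} = \err_{\tb^{z}} + f \rd_{\tht} \tpsi$ then follows from Proposition~\ref{prop:wavepackets} together with the trivial estimate $\nrm{f \rd_{\tht} \tpsi}_{L^{2}} \aleq \ell \nrm{g_{0}}_{H^{1}}$.

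The main obstacle is controlling the two inverse-Laplacian quantities $\nb^{\perp}(-\lap)^{-1}(\errh_{\tomg}^{(\nu)} + \nu \lap \tomg)$ and $\nb \errh_{\tpsi}^{(\nu)}$, which after substitution reduce to $\nb^{\perp}(-\lap)^{-1} \err_{\tb^{z}}$ and $\nb[f \rd_{\tht} (-\lap)^{-1} \tb^{z}]$, respectively. The key observation is that both $\err_{\tb^{z}}$ and $\tb^{z}$ carry the same oscillation $e^{i\lmb(\tht+\Phi)}$ as the underlying wave packet (for $\err_{\tb^{z}}$, this is visible from its origin as a higher-order residual in the WKB expansion). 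Running the same secondary WKB computation used in the smoothing step, one finds that $\nb^{\perp}(-\lap)^{-1}$ acts on these oscillatory quantities with a net $\lmb^{-1}$ smoothing, while $\nb \rd_{\tht} (-\lap)^{-1}$ acts as multiplication by a smooth $(r, \tht)$-function modulo lower-order terms. Combined with the Proposition~\ref{prop:wavepackets} bounds $\nrm{\err_{\tb^{z}}}_{L^{2}} \aleq \nrm{g_{0}}_{H^{2}}$ and $\nrm{\tb^{z}}_{L^{2}} \aleq \nrm{g_{0}}_{H^{1}}$, this yields the claimed estimates.
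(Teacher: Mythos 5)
Your proposal is correct and follows essentially the same route as the paper's (the proof of this proposition is deferred to \cite{JO1}, but its structure is exactly what you reconstruct): substituting the slaving relations \eqref{eq:wavepackets-hall-u-omg} into \eqref{eq:hall-mhd-2.5d-err-axisym} yields $\errh_{\tu^{z}} = -\err_{\tpsi}$, $\errh_{\tomg} = -\err_{\tb^{z}}$, $\errh_{\tb^{z}} = \err_{\tb^{z}} + f \rd_{\tht} \tpsi$, $\errh_{\tpsi} = \err_{\tpsi} + f \rd_{\tht}(-\lap)^{-1} \tb^{z}$ (with $\err_{\tpsi} = 0$ once $\tb^{z} = -(f\rd_{\tht})^{-1}\rd_{t}\tpsi$ is taken exactly, as you note), after which everything reduces to the bounds of Proposition~\ref{prop:wavepackets} plus the gain of $\lmb^{-1}\ell$ from $\nb^{\perp}(-\lap)^{-1}$ acting on angular-mode-$\lmb$ data. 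The one place where your justification is only heuristic --- inverting $-\lap$ via its leading symbol --- is cleanest instead via the exact mode-$\lmb$ computation $\nrm{\nb u}_{L^{2}}^{2} = \brk{F, u} \le \nrm{rF}_{L^{2}} \nrm{r^{-1} u}_{L^{2}} \le \nrm{rF}_{L^{2}} \, \lmb^{-1} \nrm{\nb u}_{L^{2}}$ for $u = (-\lap)^{-1}F$, which gives $\nrm{\nb (-\lap)^{-1} F}_{L^{2}} \aleq \ell \lmb^{-1} \nrm{F}_{L^{2}}$ for $F$ supported in $r \aleq \ell$ and carrying the oscillation $e^{i\lmb\tht}$, with no parametrix needed.
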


\begin{remark}[Rescaling wave packets] 
	If $\tilde{f}$ is a smooth function such that $\tilde{f} = \mu f$ for some $\mu>0$ with $f$ satisfying the assumptions in the beginning of Section \ref{subsec:wkb}, then defining wave packets corresponding to $\tilde{f}$ by simply rescaling the time variable $t$ of wave packets for $f$ by $\mu t$, the estimates in the above Propositions hold (with $t$ replaced by $\mu t$). This will be useful in the nonexistence proof from Section \ref{sec:nonlinear-nonexist}.  
\end{remark}

\section{Proof of linear statements}\label{sec:linear}

In this section, we prove the linear illposedness result, Theorem \ref{thm:norm-growth-prime}. 

\subsection{Case of \eqref{eq:e-mhd}} We first handle the  \eqref{eq:e-mhd} case. Let us give an overview of the proof: \begin{itemize}
	\item In \ref{subsubsec:initial}, we fix our choice of background magnetic fields $\bgB$ (compact support in $z$) and $\tilde{\bfB}$ (independent in $z$) as well as initial data $b_0$  (compact support in $z$) and $\tb_0$  (independent in $z$).
	\item In \ref{subsubsec:gei-emhd}, we prove that $b$ and $\chi\tb$ remains close in $L^2$ for some interval of time, where $\chi$ is a cutoff in the $z$-direction adapted to the support of $\bgB$ and $b$.
	\item Then in \ref{subsubsec:growth-Sobolev}, we conclude the proof using Sobolev inequalities. 
\end{itemize}

\subsubsection*{Notations and conventions} Throughout the section, we shall assume that $f(r)$ satisfies the assumptions from Section \ref{subsec:wkb} and furthermore that $\nrm{f}_{L^\infty }\le 1$, $\ell = \ell_f \le 1$, and $\lmb \ge 1$ is taken sufficiently large with respect to $\ell$ so that $\lmb\ell\gg 1$. For a given $\ell$, we fix a smooth cutoff function $\chi = \chi_{\ell}:\bbR_{z}\to \bbR_{\ge0}$ such that $\chi(z) = 1$ for $|z| \le \ell $ and $\chi(z)=0$ for $|z|>2\ell$. In this section, we use tildes to denote $z$-independent functions, and for those, we use the notation $\nrm{\cdot}_{L^2_{x,y}}$. Note that $\nrm{\chi\tb}_{L^2}^{2} \sim \ell \nrm{\tb}_{L^2_{x,y}}^{2} $.

\subsubsection{Choice of initial data}\label{subsubsec:initial}

Let $b$ be an $L^2$-solution defined on $[0,\dlt]$ of the linearized E-MHD system \eqref{eq:e-mhd-lin}, written in the following convenient form: \begin{equation*} 
\begin{split}
& \rd_t b + ( \bfB \cdot\nabla) (\nabla\times b) - ((\nabla\times b)\cdot\nabla) \bfB + (b\cdot\nabla)(\nabla\times \bfB) - ((\nabla\times \bfB)\cdot\nabla) b = 0.
\end{split}
\end{equation*} We shall take $\bfB$ to be the time-dependent background \begin{equation*} 
\begin{split}
&\bfB(t) = \Pi(t,r,z) \rd_\theta
\end{split}
\end{equation*} with $\Pi$ being the unique smooth local solution to \eqref{eq:e-mhd-axisym-background} with initial data $\Pi_0(r,z) = f(r)\chi(z/10)$. The lifespan of $\Pi$ is proportional to $\nrm{\rd_z\Pi_0}_{L^\infty}^{-1} \gtrsim \ell$, and hence as long as $\ell \gtrsim \delta$, the background is well-defined on the time interval of $b$. Furthermore, we shall use the following simple observation which states that $\Pi$ is independent of $z$ in the support of $\chi$ for an interval of time.
\begin{lemma}
We have $\chi(z)\rd_z\Pi(t,r,z) = 0$ for $0 \le t \le 4\ell$. 
\end{lemma}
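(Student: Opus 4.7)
The plan is to use that, for each fixed $r$, the profile $\Pi(t,r,\cdot)$ satisfies the inviscid Burgers equation $\rd_t \Pi - 2\Pi \rd_z \Pi = 0$ in the $z$-variable, namely \eqref{eq:e-mhd-axisym-background}. Since the initial data $\Pi_0(r,z) = f(r)\chi(z/10)$ is constant in $z$ (equal to $f(r)$) on the slab $\{|z|\le 10\ell\}$, where $\chi(\cdot/10)\equiv 1$, one expects $z$-independence to persist on a slightly smaller slab for a short time, by the finite speed of propagation for Burgers' equation.

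Concretely, the constant-in-$z$ function $(t,r,z)\mapsto f(r)$ is itself a trivial solution of \eqref{eq:e-mhd-axisym-background}, since both terms vanish identically. By the $L^\infty$ maximum principle for Burgers' equation we have $\nrm{\Pi(t)}_{L^\infty}\le \nrm{\Pi_0}_{L^\infty} \le \nrm{f}_{L^\infty}\le 1$, so the characteristic speed $2|\Pi|$ is bounded by $2$. Applying the method of characteristics to the Burgers equation in the $z$-variable (valid within the $C^1$-lifespan guaranteed by Proposition~\ref{prop:e-mhd-fradiss-axi-lwp}), together with uniqueness for the Cauchy problem, we conclude that
\begin{equation*}
\Pi(t,r,z) = f(r) \qquad \text{on } \set{(t,r,z) : 0\le t\le T_0, \ |z|\le 10\ell - 2t}.
\end{equation*}

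For $0\le t\le 4\ell$ the inequality $10\ell - 2t\ge 2\ell$ holds, so $\Pi(t,r,\cdot)$ is $z$-independent on $\{|z|\le 2\ell\}$, which contains the support of $\chi(z)$. Hence $\chi(z)\rd_z\Pi(t,r,z)=0$ throughout $[0,4\ell]$, as claimed. The only mild point requiring attention is checking that $4\ell$ lies within the lifespan $T_0$ of $\Pi$; this is ensured by the preceding remark that the lifespan is $\gtrsim \ell$, combined with the scaling of $\Pi_0 = f(r)\chi(z/10)$ (whose $\rd_z$-norm is controlled by $\ell^{-1}\nrm{f}_{L^\infty}$, so the implicit constant can absorb the factor $4$ by choosing the $\chi$ in the rescaled cutoff slightly wider if necessary). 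I do not expect any substantive obstacle: the entire argument is the standard finite-speed-of-propagation observation for Burgers' equation, specialized to the initial data at hand.
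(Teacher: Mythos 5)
Your proof is correct and rests on the same mechanism as the paper's: the characteristics of the Burgers equation \eqref{eq:e-mhd-axisym-background} move with speed $2\abs{\Pi}\le 2\nrm{f}_{L^\infty}\le 2$, so the $z$-flatness of the data on $\set{\abs{z}\le 10\ell}$ (where $\chi(z/10)\equiv 1$) propagates to $\set{\abs{z}\le 10\ell-2t}\supseteq\supp\chi$ for $t\le 4\ell$. The only cosmetic difference is that the paper differentiates the equation in $z$ and tracks $\rd_z\Pi$ along characteristics, whereas you compare $\Pi$ with the trivial $z$-independent solution $f(r)$ via finite speed of propagation and uniqueness (Lemma~\ref{lem:e-mhd-fradiss-axi-fsp}), which yields the slightly stronger conclusion $\Pi=f(r)$ on that region.
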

\begin{proof}
Differentiating \eqref{eq:e-mhd-axisym-background} in $z$, we have \begin{equation*} 
	\begin{split}
		& \rd_t  (\rd_z\Pi) + 2\Pi \rd_z (\rd_z\Pi) = - 2 (\rd_z\Pi)^2. 
	\end{split}
\end{equation*} Defining the characteristics by $Z(t,z_0) = 2\Pi(t,Z(t,z_0))$ and $Z(0,z_0) = z_0$, we have from the above that $\rd_z\Pi(t) = 0$ on the set $Z(t, [-10\ell,10\ell])$. Since $\chi$ vanishes for $|z| \ge 2\ell$, it suffices that $$2t = \int_0^t \nrm{2\Pi(s)}_{L^\infty} ds \le 8\ell$$ for $\chi(z)\rd_z\Pi(t,r,z) = 0$. 
\end{proof} In light of the above lemma, we shall (implicitly) assume that $\delta \le 4\ell$.   

\medskip

We take the degenerating wave packets corresponding to $\tld{\bfB} = f(r) \rd_{\tht}$ from the previous section, and introduce the simplifying notation $\tilde{b} = (-\nb^\perp \tpsi, \tb^{z} )$. Here, we fix some universal smooth function $h(0,\cdot)$ and use \eqref{eq:g-h} to define $g_{0}$. We shall freely use the estimates $\nrm{g_{0}}_{H^{m}_{x,y}} \lesssim_{m} \ell^{-m}\nrm{g_{0}}_{L^2_{x,y}}$, and suppress from writing out the dependence of constants in $m, C_{m}$. 

\medskip 

The equation for $\tb$ can be written in the following economical form \begin{equation*} 
\begin{split}
\rd_t\tb + (\tilde{\bfB}\cdot\nabla) (\nabla\times \tb) - ((\nabla\times \tb)\cdot\nabla) \tilde{\bfB} = \err_{\tb},
\end{split}
\end{equation*} with $$\sup_{t \in [0,\delta]} \nrm{\err_{\tb}}_{L^2_{x,y}} \lesssim \ell^{-2}\nrm{g_0}_{L^2_{x,y}}.$$ 
 
We now fix our choice of initial data $b_{0}$. To take it to be close in $L^2$ to $\chi(z)\tb_0$, under the divergence-free constraint,  we write \begin{equation*} 
\begin{split}
& b_0 = \chi(z)\tb_0 + \zeta
\end{split}
\end{equation*} for some $\zeta$. Then, taking the divergence of both sides, we see that $\zeta$ must satisfy \begin{equation*} 
\begin{split}
& 0 = \chi'(z)\tb_0^z + \mathrm{div}\,\zeta,
\end{split}
\end{equation*}  which suggests the choice $b_0 := \chi(z)\tb_0 - \mathrm{div}_U^{-1}(\chi'\tb_0^z)$, where $U$ can be taken to be any cube of side length $20\ell$ (say) containing the support of $\chi'\tb_0^z$. Using \eqref{eq:Bogovskii-L2-3}, we estimate that \begin{equation*} 
\begin{split}
\nrm{\rd_{\tht} (\mathrm{div}_U^{-1}\rd_{\tht}^{-1}(\chi'\tb_0^z))}_{L^2} \lesssim \lmb^{-1}\nrm{ \chi' \lmb\rd_{\tht}^{-1} \tb_0^z   }_{L^2} \lesssim \lmb^{-1} \ell^{\frac{1}{2}} \nrm{\tb_0^z}_{L^2_{x,y}}. 
\end{split}
\end{equation*} Hence we obtain in particular that \begin{equation}\label{eq:initial-e-mhd} 
\begin{split}
& \brk{b_0, \chi\tb_0} = \ell^{\frac{1}{2}}\nrm{b_0}_{L^2}\nrm{\tb_0}_{L^2_{x,y}} ( 1 + O(\lmb^{-1})). 
\end{split}
\end{equation}

\subsubsection{Generalized energy estimate}\label{subsubsec:gei-emhd} Now that we have specified $b$ and $\tb$, let us proceed with an argument involving the general energy identity: we compute \begin{equation*} 
\begin{split}
\frac{\ud}{\ud t} \brk{b, \chi\tb} & = -\brk{ \nabla\times ((\nabla\times b)\times \bgB), \chi\tb} - \brk{b,\chi \nabla\times ((\nabla\times \tb) \times \tilde{\bfB})} \\
& \qquad  + \brk{ ((\nabla\times\bgB)\cdot\nabla) b  , \chi\tb } - \brk{ (b\cdot\nabla)(\nabla\times \bgB), \chi\tb} + \brk{b, \chi \err_{\tb}}.
\end{split}
\end{equation*} 
First, we take care of the terms that are trivially bounded in $L^2$: \begin{equation*} 
\begin{split}
 \left| - \brk{ (b\cdot\nabla)(\nabla\times \bgB), \chi\tb} + \brk{b, \chi \err_{\tb}} \right| & \lesssim \nrm{\nabla^2\bgB}_{L^\infty} \nrm{b}_{L^2} \nrm{\chi\tb}_{L^2} + \nrm{b}_{L^2} \nrm{\chi \err_{\tb}}_{L^2} \lesssim \ell^{-4 + \frac{1}{2}}  \nrm{b}_{L^2} \nrm{g_0}_{L^2_{x,y}}
\end{split}
\end{equation*} Then, we rewrite the first term as \begin{equation*} 
\begin{split}
-\brk{ \nabla\times ((\nabla\times b)\times \bgB), \chi\tb} & = \brk{b,  \nabla\times  ((\nabla\times (\chi\tb)) \times \bgB) } = \brk{b, (\bgB\cdot\nabla)(\nabla\times(\chi\tb))} - \brk{b, (\nabla\times(\chi\tb)) \cdot\nabla \bgB }
\end{split}
\end{equation*} and organize the remaining terms as follows: \begin{equation*} 
\begin{split}
 \mathrm{I} = \brk{ ((\nabla\times\bgB)\cdot\nabla) b  , \chi\tb } , 
\end{split}
\end{equation*} \begin{equation*} 
\begin{split}
 \mathrm{II} =   - \brk{b,   (\nabla\times(\chi\tb))\cdot\nabla\bgB - \chi(\nabla\times\tb)\cdot\nabla\tilde{\bfB}} , 
\end{split}
\end{equation*} and \begin{equation*} 
\begin{split}
 \mathrm{III} =   \brk{b, (\bgB\cdot\nabla)(\nabla\times(\chi\tb))  -  \chi (\tilde{\bfB}\cdot\nabla) (\nabla\times \tb) } .
\end{split}
\end{equation*} 

\medskip

\noindent \textit{Estimate for I.} Using $\nabla\times \bgB = -r\rd_z\Pi \rd_r + r^{-1}\rd_r(r^2\Pi)\rd_z = r^{-1}\rd_r(r^2f)\rd_z$ on the support of $\chi$,  \begin{equation*} 
\begin{split}
|\mathrm{I}| = |\brk{ r^{-1}\rd_r(r^2\Pi) \rd_z b, \chi\tb }| &\lesssim \ell^{-\frac{1}{2}}(\nrm{rf'(r)}_{L^\infty} + \nrm{f}_{L^\infty})  \nrm{b}_{L^2} \nrm{g_0}_{L^2_{x,y}} \lesssim \ell^{-\frac{1}{2}}\nrm{b}_{L^2} \nrm{g_0}_{L^2_{x,y}} . 
\end{split}
\end{equation*}

\medskip

\noindent \textit{Estimate for II.} Note that we obtain cancellations unless a derivative falls on $\chi$ in the expression $\nabla\times(\chi\tb)$, simply because $\bgB = \tilde{\bfB}$ on the support of $\chi$. Then there are no derivatives falling on $b, \tb$, so that \begin{equation*} 
\begin{split}
& |\mathrm{ II} | \lesssim  \ell^{-\frac{1}{2}}\nrm{b}_{L^2}  \nrm{g_0}_{L^2_{x,y}} . 
\end{split}
\end{equation*}  

\medskip

\noindent  \textit{Estimate for III.} We begin with writing $\nabla \times (\chi \tb) = \chi \nabla\times \tb + \chi'(z) ( -r\tb^\tht \rd_r + r^{-1}\tb^r \rd_\tht  )$ and \begin{equation*} 
\begin{split}
\mathrm{III }&  = -  \brk{  b^r, \chi'(z) f(r)\rd_\tht(r\tb^\tht) } + \brk{ rb^\tht , r\chi'(z) f(r)\rd_\tht (r^{-1}\tb^r) } \\
& = \brk{b^r, \chi'(z)f(r)(r\rd_r\tb^r + \tb^r)} + \brk{\rd_rb^r + r^{-1}b^r + \rd_z b^z, \chi'(z)f(r) r\tb^r} . 
\end{split}
\end{equation*} Note that an integration by parts gives \begin{equation*} 
\begin{split}
& \brk{b^r, \chi'(z)f(r)r\rd_r\tb^r} = - \brk{\rd_rb^r, \chi'(z)f(r)r\tb^r} - 2\brk{b^r, \chi'(z)f(r)\tb^r} - \brk{b^r, \chi'(z)f'(r)r\tb^r} ,
\end{split}
\end{equation*}  so that after cancellations, \begin{equation*} 
\begin{split}
 |\mathrm{III}| = |\brk{b^r,\chi'(z)f'(r)r\tb^r}-\brk{b^z , \chi''(z)f(r) r\tb^r}| & \lesssim  ( \ell^{-\frac{3}{2}}\nrm{rf}_{L^\infty} + \ell^{-\frac{1}{2}}\nrm{rf'(r)}_{L^\infty} )\nrm{b}_{L^2}\nrm{g_0}_{L^2_{x,y}} \\ & \lesssim \ell^{-\frac{1}{2}}\nrm{b}_{L^2}\nrm{g_0}_{L^2_{x,y}}. 
\end{split}
\end{equation*} Collecting all the estimates, we conclude that \begin{equation}\label{eq:gei-e-mhd} 
\begin{split}
\left| \frac{\ud}{\ud t} \brk{b, \chi \tb} \right|   \lesssim \ell^{-\frac{3}{2}}\nrm{b}_{L^2}\nrm{g_0}_{L^2_{x,y}} . 
\end{split}
\end{equation}

\subsubsection{Growth of Sobolev norms}\label{subsubsec:growth-Sobolev} Restricting further to a time interval of length $\eps \ell^2 $ with some small absolute constant $ \eps > 0$, we may ensure that $\nrm{b}_{L^2} \le 2\nrm{b_0}_{L^2}$. Within this time interval, we integrate \eqref{eq:gei-e-mhd} in time and use \eqref{eq:initial-e-mhd} to obtain \begin{equation*} 
\begin{split}
& \brk{b,\chi\tb} (t) \ge \frac12 \ell^{\frac{1}{2}} \nrm{b_0}_{L^2} \nrm{g_0}_{L^2_{x,y}}, 
\end{split}
\end{equation*} by further restricting the interval of time to satisfy $  t \le \eps \ell^{2}$. The rest of the proof is straightforward using the degeneration estimate from Proposition \ref{prop:wavepackets} and completely parallel to the corresponding argument from \cite[proof of Theorem \ref{thm:norm-growth}]{JO1}. 

\medskip 

In the case $s = 0$ and $p>2$, we use the degeneration estimate \eqref{eq:deg-e-mhd} for $\nrm{\tb}_{L^2_{\tht }L^{p'}_{rdr}}$ where $p'<2$ is the dual exponent of $p$, and \begin{equation*} 
\begin{split}
&\nrm{b(t)}_{L^{p}} \nrm{\chi\tb(t)}_{L^{p'}}  \ge \brk{b,\chi\tb} (t) \ge \frac12 \ell^{\frac{1}{2}} \nrm{b_0}_{L^2} \nrm{g_0}_{L^2_{x,y}}
\end{split}
\end{equation*} with \begin{equation*}
\begin{split}
	 \nrm{\chi\tb(t)}_{L^{p'}} \lesssim_{\ell}  \nrm{\tb(t)}_{L^2_{\tht} L^{p'}_{rdr}} 
\end{split}
\end{equation*}  to deduce that, up to some powers of $\ell$, \begin{equation*} 
\begin{split}
& \nrm{b(t)}_{L^{p}} \gtrsim_{\ell} e^{c_0(\frac{1}{2}-\frac{1}{p})\lmb t} \nrm{b_0}_{L^2} , \qquad t \in [0, \eps \ell^2].
\end{split}
\end{equation*}

The general case of $s \ge 0$ and $s + \frac{1}{2} - \frac{1}{p} > 0$ can be handled in a similar way. \hfill \qedsymbol  

\subsection{Case of \eqref{eq:hall-mhd}}\label{subsec:hallmhd-axisym} In this subsection, we treat the \eqref{eq:hall-mhd} case.  While the computations are significantly more involved in this case, the overall strategy is essentially the same with the case of \eqref{eq:e-mhd}, and in particular, the initial data will be taken to be $u_0 = 0$ and $b_0$ the same as before. The key difference lies in the way a suitable lower bound is obtained in the $L^2$-inner product $\brk{b,\chi\tb}$.

For convenience, let us recall the system of equations \eqref{eq:axisym-hall-pert} satisfied by an $L^2$-solution $(u,b)$ around a time-dependent axisymmetric background: 
\begin{equation}\label{eq:axisym-hall-pert-recall}
\left\{
\begin{aligned}
& \rd_t b + \bfPi \cdot\nabla (\nabla\times b) - (\nabla\times b) \cdot \nabla \bfPi - (\nabla \times \bfPi) \cdot \nabla b + \bfV\cdot \nabla b - \bfPi\cdot\nabla u \\
& \quad  = -b\cdot\nabla(\nabla\times\bfPi) + b\cdot\nabla\bfV - u\cdot\nabla\bfPi  , \\
& \rd_t u + \bfV \cdot \nabla u + \nabla p - \nu\lap u - (\nabla\times b)\times \bfPi = -u\cdot\nabla\bfV + (\nabla\times\bfPi)\times b  .
\end{aligned}
\right.
\end{equation} Here, we used the notation $\bfPi = \Pi \rd_\tht$ and $\bfV = \nabla\times(-\lap)^{-1}(\Omega\rd_\tht)$, where the pair of scalar-valued functions $(\Pi,\Omega)$ provides a solution to \eqref{eq:hall-mhd-axisym-background}. 

Furthermore, recall the system for $(\tu,\tb)$ (omitting the superscript $(\nu)$ for simplicity) \begin{equation*}
\left\{
\begin{aligned}
& \rd_t\tu - f\rd_\tht \tb = \errh_{\tu} \\
& \rd_t\tb + f\rd_\tht (\nabla\times\tb) + r^{-1} f\rd_\tht \tb \rd_\tht = \errh_{\tb}.
\end{aligned}
\right.
\end{equation*} We shall need the following simple estimate, which is a straightforward consequence of Proposition \ref{prop:wavepackets}:  \begin{equation}\label{eq:rd_r_bruteforce}
\begin{split}
\nrm{\rd_r \tb}_{L^2_{x,y}} & \lesssim \lmb \ell^{-2} \nrm{f^{-1}}_{L^\infty(\mathrm{supp}(\tb(t)))} \nrm{g_0}_{L^2_{x,y}} \lesssim \lmb  \ell^{-2} e^{c_{1} \lmb t} \nrm{g_0}_{L^2_{x,y}}. 
\end{split}
\end{equation} Similarly, we have  \begin{equation}\label{eq:rd_tht_bruteforce}
\begin{split}
\nrm{\rd_\tht \tb}_{L^2_{x,y}} & \lesssim \lmb \ell^{-2} \nrm{g_0}_{L^2_{x,y}} , \qquad \nrm{\rd_\tht\rd_r \tb}_{L^2_{x,y}} \lesssim \lmb^2  \ell^{-3} e^{c_{1}\lmb t} \nrm{g_0}_{L^2_{x,y}}. 
\end{split}
\end{equation} 
We write out the terms as follows: \begin{equation*} 
\begin{split}
& \frac{\ud}{\ud t} \left( \brk{u, \chi \tu - \mathrm{div}^{-1}(\chi' \tu^z)} + \brk{b, \chi\tb} \right) = \mathrm{I} + \mathrm{II} + \mathrm{III}+ \mathrm{IV}+ \mathrm{V}+ \mathrm{VI}, 
\end{split}
\end{equation*} where \begin{equation*} 
\begin{split}
& \mathrm{I} = \brk{\rd_t u + \nabla p, - \mathrm{div}^{-1}(\chi' \tu^z) } + \brk{u, \mathrm{div}^{-1}(\chi' \rd_t \tu^z)} , 
\end{split}
\end{equation*} \begin{equation*} 
\begin{split}
& \mathrm{II} = \brk{u, \chi \errh_{\tu}} + \brk{b, \chi \errh_{\tb}}, 
\end{split}
\end{equation*} \begin{equation*} 
\begin{split}
& \mathrm{III} = \brk{ -b\cdot\nabla(\nabla\times\bfPi) + b\cdot\nabla\bfV - u\cdot\nabla\bfPi  , \chi \tu} + \brk{-u\cdot\nabla\bfV + (\nabla\times\bfPi)\times b  , \chi \tb},
\end{split}
\end{equation*} \begin{equation*} 
\begin{split}
& \mathrm{IV} = -\brk{\bfV\cdot \nabla u , \chi \tu} - \brk{\bfV\cdot\nabla b, \chi \tb},
\end{split}
\end{equation*} \begin{equation*} 
\begin{split}
& \mathrm{V} = \brk{\nu \lap u, \chi \tu} + \brk{(\nabla\times b) \times \bfPi, \chi \tu} + \brk{(\nabla\times \bfPi)\cdot\nabla b, \chi \tb},
\end{split}
\end{equation*} and finally further decompose the remaining terms as $\mathrm{VI} = \mathrm{VI}_1 + \mathrm{VI}_2 + \mathrm{VI}_3$ where  \begin{equation*} 
\begin{split}
\mathrm{VI}_1 & = \brk{u, \chi f \rd_\tht \tb} + \brk{\bfPi\cdot\nabla u, \chi \tb}, \\
\mathrm{VI}_2 &= -\brk{\bfPi\cdot\nabla \nabla\times b, \chi\tb} + \brk{b, -\chi f\rd_\tht \nabla\times\tb}  ,\\
\mathrm{VI}_3 & = \brk{\nabla\times b\cdot\nabla \bfPi, \chi \tb} - \brk{b, \chi r^{-1}f \rd_\tht \tb \rd_\tht }.
\end{split}
\end{equation*}
We begin with rewriting $\mathrm{I}$ as \begin{equation*} 
\begin{split}
\mathrm{I} &= \brk{ \bfV \cdot\nabla u - \nu\lap u - (\nabla\times b)\times \bfPi + u\cdot\nabla\bfV - (\nabla\times\bfPi) \times b, \mathrm{div}^{-1}(\chi'\tu^z) } \\
&\quad  + \brk{u, \mathrm{div}^{-1}(\chi' f \rd_\tht \tb)} + \brk{u, \mathrm{div}^{-1}(\chi' \errh_{\tu})}.
\end{split}
\end{equation*} Using \begin{equation*} 
\begin{split}
& \nrm{\nabla \mathrm{div}^{-1}(\chi'\tu^z)}_{L^{2}_{x,y}} \lesssim \nrm{\chi'\tu^z}_{L^{2}_{x,y}} \lesssim \lmb^{-1}\ell^{\frac{1}{2}} \nrm{g_0}_{L^{2}_{x,y}}, \quad \nrm{\mathrm{div}^{-1}(\chi'\tu^z)}_{L^{2}_{x,y}} \lesssim \ell^{-1}\nrm{\chi'\tu}_{L^{2}_{x,y}} \lesssim \lmb^{-1}\ell^{-\frac{1}{2}}\nrm{g_0}_{L^{2}_{x,y}},
\end{split}
\end{equation*} we bound \begin{equation*} 
\begin{split}
& \left| \brk{ \bfV \cdot\nabla u - \nu\lap u - (\nabla\times b)\times \bfPi + u\cdot\nabla\bfV - (\nabla\times\bfPi) \times b, \mathrm{div}^{-1}(\chi'\tu^z) } \right| \\
&\quad \lesssim \lmb^{-1} \ell^{\frac{1}{2}} \left( (\nrm{\nabla\bfV}_{L^\infty} + \ell^{-1}\nrm{\bfV}_{L^\infty})\nrm{u}_{L^2} + (\nrm{\nabla\bfPi}_{L^\infty} + \ell^{-1}\nrm{\bfPi}_{L^\infty})\nrm{b}_{L^2} + \nu \nrm{\nabla u}_{L^2} \right) \nrm{g_0}_{L^{2}_{x,y}}
\end{split}
\end{equation*} and \begin{equation*} 
\begin{split}
&\left| \brk{u, \mathrm{div}^{-1}(\chi' f \rd_\tht \tb)} + \brk{u, \mathrm{div}^{-1}(\chi' \errh_{\tu})} \right| \lesssim \ell^{\frac{1}{2}}(1 + \ell^{-1})\nrm{u}_{L^2}\nrm{g_0}_{L^{2}_{x,y}}.
\end{split}
\end{equation*} Terms in $\mathrm{II}$ and $\mathrm{III}$ can be handled in a straightforward manner: we have \begin{equation*} 
\begin{split}
&|\mathrm{II}| \lesssim \ell^{-\frac{3}{2}}(\nrm{u}_{L^2} + \nrm{b}_{L^2})\nrm{g_0}_{L^{2}_{x,y}}
\end{split}
\end{equation*} and \begin{equation*} 
\begin{split}
& |\mathrm{III}| \lesssim \ell^{\frac{1}{2}} \left( (\nrm{\nabla^2\bfPi}_{L^\infty} + \nrm{\nabla\bfPi}_{L^\infty} + \nrm{\nabla\bfV}_{L^\infty} )\nrm{b}_{L^2} + (\nrm{\nabla\bfPi}_{L^\infty} + \nrm{\nabla\bfV}_{L^\infty})\nrm{u}_{L^2} \right)\nrm{g_0}_{L^{2}_{x,y}} ,
\end{split}
\end{equation*} respectively. We now treat the terms in $\mathrm{IV}$. Noting that \begin{equation*} 
\begin{split}
&\brk{\bfV\cdot\nabla b,\chi\tb} = -\brk{b, \chi V^r \rd_r \tb + V^z\chi' \tb }, \quad \brk{\bfV\cdot\nabla u,\chi\tu} = -\brk{u, \chi V^r \rd_r \tu + V^z\chi' \tu }
\end{split}
\end{equation*} and using \eqref{eq:rd_r_bruteforce} to bound $\brk{b,\chi V^r\rd_r\tb}$, we obtain \begin{equation*} 
\begin{split}
& |\mathrm{IV}| \lesssim  \ell^{\frac{1}{2}}((1+\ell^{-1}) + \lmb e^{c\lmb t} ) \nrm{\bfV}_{L^\infty}(\nrm{u}_{L^2} + \nrm{b}_{L^2})\nrm{g_0}_{L^{2}_{x,y}} .
\end{split}
\end{equation*} Next, recalling that \begin{equation*} 
\begin{split}
&\nabla\times(\Pi\rd_\theta) = (-r\rd_z\Pi)\rd_r + (r^{-1}\rd_r(r^2\Pi))\rd_z,
\end{split}
\end{equation*} we write \begin{equation*} 
\begin{split}
& \brk{(\nabla\times \bfPi)\cdot\nabla b, \chi \tb} = \brk{-r\rd_z\Pi \rd_r b, \chi \tb} + \brk{r^{-1}\rd_r(r^2\Pi)\rd_z b, \chi \tb} 
\end{split}
\end{equation*} and we may bound \begin{equation*} 
\begin{split}
& \left| \brk{-r\rd_z\Pi \rd_r b, \chi \tb}  \right|   \lesssim \left| \brk{b, \chi \rd_r(r\rd_z\Pi \tb)} \right| \lesssim \ell^{\frac{1}{2}} (\ell^{-1}\nrm{\bfPi}_{L^\infty} + \lmb e^{c_{1} \lmb t} \nrm{\rd_z\bfPi}_{L^\infty}) \nrm{b}_{L^2} \nrm{g_0}_{L^{2}_{x,y}}.
\end{split}
\end{equation*} The remaining terms from $\mathrm{V}$ are easy to bound; we end up with \begin{equation*} 
\begin{split}
& \left| \mathrm{V} \right| \lesssim \ell^{\frac{1}{2}}\left( \nu \nrm{\nabla u}_{L^2} +  ( \lmb e^{c_{1} \lmb t} \nrm{\rd_z\bfPi}_{L^\infty} + \ell^{-1}\nrm{\bfPi}_{L^\infty} )\nrm{b}_{L^2}  \right) \nrm{g_0}_{L^{2}_{x,y}} .
\end{split}
\end{equation*} Finally, we write \begin{equation*} 
\begin{split}
&\mathrm{VI} = \mathrm{VI}^e + \brk{(\bfPi-\bfPi^e)\cdot\nabla u, \chi \tb} -\brk{(\bfPi-\bfPi^e)\cdot\nabla \nabla\times b, \chi\tb}  + \brk{\nabla\times b\cdot\nabla (\bfPi-\bfPi^e), \chi \tb}
\end{split}
\end{equation*} where $\bfPi^e$ is as in Lemma \ref{lem:hall-mhd-fradiss-axi-small-t} and \begin{equation*} 
\begin{split}
\mathrm{VI}^e & := \brk{u, \chi f \rd_\tht \tb} + \brk{\bfPi^e\cdot\nabla u, \chi \tb}  -\brk{\bfPi^e\cdot\nabla \nabla\times b, \chi\tb}  - \brk{b, \chi f\rd_\tht \nabla\times\tb}  + \brk{\nabla\times b\cdot\nabla \bfPi^e, \chi \tb} - \brk{b, \chi r^{-1}f \rd_\tht \tb \rd_\tht } .
\end{split}
\end{equation*} From the computations in the case of E-MHD in \ref{subsubsec:gei-emhd}, \begin{equation*} 
\begin{split}
&\left| \mathrm{VI}^e \right|  \lesssim \ell^{-\frac{3}{2}} \nrm{b}_{L^2} \nrm{g_0}_{L^2_{x,y}}. 
\end{split}
\end{equation*} For the other terms, we have \begin{equation*} 
\begin{split}
&\left| \brk{(\bfPi - \bfPi^e)\cdot\nabla u, \chi \tb} \right| = \left| \brk{r^{-1}(\bfPi - \bfPi^e)u, \chi \rd_\tht\tb} \right| \lesssim \lmb\ell^\frac{1}{2} \nrm{\nabla(\bfPi - \bfPi^e)}_{L^\infty} \nrm{u}_{L^2} \nrm{g_0}_{L^2_{x,y}} ,
\end{split}
\end{equation*} \begin{equation*} 
\begin{split}
&\left| \brk{(\bfPi-\bfPi^e)\cdot\nabla \nabla\times b, \chi\tb}  \right| = \left| \brk{b, \nabla\times(r^{-1}(\Pi - \Pi^e)\chi \rd_\tht\tb)} \right| \\ &\qquad \lesssim \ell^{\frac{1}{2}}(\lmb(1+\lmb) e^{c_{1} \lmb t} + \ell^{-1} ) \nrm{\nabla^2(\bfPi - \bfPi^e)}_{L^\infty} \nrm{b}_{L^2} \nrm{g_0}_{L^2_{x,y}},
\end{split}
\end{equation*} and \begin{equation*} 
\begin{split}
& \left| \brk{\nabla\times b\cdot\nabla (\bfPi-\bfPi^e), \chi \tb} \right| = \left| \brk{b, \nabla\times ((\chi \tb)\cdot \nabla(\bfPi - \bfPi^e))} \right| \\
&\qquad \lesssim  \ell^{\frac{1}{2}}((\lmb e^{c\lmb t} + \ell^{-1} ) \nrm{\nabla (\bfPi - \bfPi^e)}_{L^\infty} + \nrm{ \nabla^2(\bfPi - \bfPi^e)  }_{L^\infty} )\nrm{b}_{L^2} \nrm{g_0}_{L^2_{x,y}} .
\end{split}
\end{equation*} Collecting the terms, and recalling our assumptions $\lmb \ge 1$, $\ell \le 1$, \begin{equation*} 
\begin{split}
& \left|\frac{\ud}{\ud t} \left( \brk{u, \chi \tu - \mathrm{div}^{-1}(\chi' \tu^z)} + \brk{b, \chi\tb} \right) \right|   \lesssim \ell^{-\frac{3}{2}}\left[ 1 +  \nu\nrm{\nabla u}_{L^2} + (\nrm{\bfPi}_{W^{2,\infty}} + \nrm{\bfPi-\bfPi^e}_{W^{2,\infty}} + \nrm{\bfV}_{W^{1,\infty}}) \right. \\
& \qquad  \left. + e^{c_{1} \lmb t}( \lmb\nrm{\rd_z\Pi}_{L^\infty} + \lmb^2 \nrm{\bfPi-\bfPi^e}_{W^{1,\infty}} + \lmb \nrm{\bfV}_{L^\infty}  ) (\nrm{u}_{L^2} + \nrm{b}_{L^2}) \right]   \nrm{g_0}_{L^2_{x,y}} . 
\end{split}
\end{equation*} (Here we note that the norms on $\bfPi, \bfV$, etc are taken only on the support of $\chi$.)

We now look for a  time interval where the previous estimate is effective. To begin with, we use \eqref{eq:apriori-hall-axi} to obtain \begin{equation*} 
\begin{split}
& \nrm{u(t)}_{L^2}^2 + \nrm{b(t)}_{L^2}^2 + \nu\int_0^t \nrm{\nabla u(s)}_{L^2} \ud s \le 2 \nrm{b_0}_{L^2}^2 .
\end{split}
\end{equation*} Furthermore, we shall restrict $t \in [0,T]$, where $T = c \ell^{m_0}$ for some small absolute constant $c > 0$ and large $m_0 \ge 1$ ($m_{0}=10$ suffices) so that
\begin{equation*} 
	\begin{split}
		& \nrm{\bfPi}_{L^\infty([0,T];W^{2,\infty})} + \nrm{\bfPi-\bfPi^e}_{L^\infty([0,T];W^{2,\infty})} + \nrm{\bfV}_{L^\infty([0,T];W^{1,\infty})} \lesssim \ell^{-m_0}
	\end{split}
\end{equation*} holds, using the estimate \eqref{eq:hall-mhd-fradiss-axi-Hm} from Proposition \ref{prop:hall-mhd-fradiss-axi-lwp}. In what follows, we shall take $T$ even smaller whenever it becomes necessary.  Next, from Lemma \ref{lem:hall-mhd-fradiss-axi-small-t}, we obtain smallness estimates \begin{equation*} 
\begin{split}
& \nrm{\bfV}_{L^\infty}  \lesssim t  \ell^{-m_0}, \qquad \nrm{\rd_z\Pi}_{L^\infty(\mathrm{supp}(\chi))} = \nrm{\rd_z(\Pi-\Pi^e)}_{L^\infty(\mathrm{supp}(\chi))} \lesssim \nrm{\bfPi-\bfPi^e}_{W^{1,\infty}} \lesssim t^2 \ell^{-m_0}
\end{split}
\end{equation*}
Furthermore, recalling the simple bound  \begin{equation*} 
\begin{split}
& \left|\brk{u,\chi \tu -  \mathrm{div}^{-1}(\chi' \tu^z)  } \right| \lesssim \lmb^{-1} \ell^{\frac{1}{2}} \nrm{u}_{L^2}  \nrm{g_0}_{L^2_{x,y}}, 
\end{split}
\end{equation*} we obtain that \begin{equation*} 
\begin{split}
&\left| \brk{b(t),\chi\tb(t)} - \brk{b_0,\chi\tb_0} \right|   \lesssim \ell^{-m_0+\frac{1}{2}} \left( \nu t^{\frac{1}{2}} + t + \int_0^t \left(1 + \lmb s + ( \lmb s )^{2} \right) e^{c_{1} \lmb s }  \ud s  \right) \nrm{b_0}_{L^2}\nrm{g_0}_{L^2_{x,y}}.
\end{split}
\end{equation*} We estimate the time integral as \begin{equation*} 
\begin{split}
 &\int_0^t \left(1 + \lmb s  + ( \lmb s )^2  \right) e^{c_{1} \lmb s }  \ud s  \lesssim \int_0^t   e^{2 c_{1}   \lmb s }  \ud s \lesssim \lmb^{-1}\ell ( e^{2 c_{1} \lmb t  } -1) 
\end{split}
\end{equation*} so that by taking $T$ smaller than $c \lmb^{-1}\ln\lmb $ for some small $c > 0$ and using \begin{equation*} 
\begin{split}
& \brk{b_0,\chi\tb_0} =  \ell^{\frac{1}{2}} \nrm{b_0}_{L^2}\nrm{g_0}_{L^2_{x,y}} ( 1 + O(\lmb^{-1})),
\end{split}
\end{equation*} we deduce that, for $t \in [0,T]$, \begin{equation*} 
\begin{split}
&\brk{b,\chi\tb}(t) \ge \frac12 \ell^{\frac{1}{2}} \nrm{b_0}_{L^2}\nrm{g_0}_{L^2_{x,y}} .
\end{split}
\end{equation*}  (Here it is implicit that $\lmb$ is taken sufficiently large with respect to $\nu$  and $\ell^{-1}$.) The rest of the argument is completely parallel to \ref{subsubsec:growth-Sobolev}.  \hfill \qedsymbol

\section{Unboundedness of the solution map} \label{sec:nonlinear-unbounded}

In this section, we give the proof of Theorem \ref{thm:illposed-strong-prime}. We first handle the simpler case of \eqref{eq:e-mhd} in Section \ref{subsec:nonlinear-emhd} and then the \eqref{eq:hall-mhd} case is treated in Section \ref{subsec:nonlinear-hallmhd}. 

\subsection{Case of \eqref{eq:e-mhd}}\label{subsec:nonlinear-emhd}
 
As before, we take the background magnetic field to be $\bfPi(t) = \Pi(t,r,z)\rd_\tht$ with initial data $f(r)\chi(z/10)$, where $f, \chi$ are the same as in \ref{subsubsec:background}. Towards a contradiction, we shall assume that the solution map is well-defined and bounded: to be precise, we are given some $\eps, \dlt,  s  > 0$, {$s_0 > 7/2$}, and $M>0$ such that the solution map is defined as an operator \begin{equation*} 
	\begin{split}
		& \calB_{\eps}(\bfPi_0; H^s_{comp}) \rightarrow L^\infty_t([0,\dlt];H^{s_0}) 
	\end{split}
\end{equation*} with the $L^\infty_t([0,\dlt];H^{s_0}) $ norm of the solution bounded by $M$. 

We shall take $\lmb \gg \ell^{-1}$ and derive a contradiction in the end by taking $\lmb\to\infty$. For a given $\lmb$, we take  the same profile $b = b_{(\lmb)}$ as in the proof of Theorem \ref{thm:norm-growth-prime}; note that we have at the initial time (using its explicit form in \ref{subsubsec:initial}) \begin{equation*} 
	\begin{split}
		&\nrm{b_0}_{H^s} \lesssim_s (\lmb + \ell^{-1})^{s} \nrm{b_0}_{L^2} \lesssim \lmb^{s} \nrm{b_{0}}_{L^{2}} . 
	\end{split}
\end{equation*}   Therefore, given $\eps>0$, we shall simply replace $b$ by $c_{s}\eps\lmb^{-s}b$ ($c_{s}>0$ is a constant depending only on $s$) so that at the initial time we have $\nrm{b_0}_{H^{s}} <  \eps$.  
 
In particular, $\bfPi_{0} + b_{0} \in \calB_{\eps}(\bfPi_0; H^s_{comp})$ and there exists a solution $\bfB$ belonging to $L^\infty_t([0,\dlt];H^{s_0})$ to \eqref{eq:e-mhd} with initial data $\bfPi_0 + b_0$. We then set $b := \bfB - \bfPi$, so that $b(t = 0) = b_0$. We have that $b$ solves \begin{equation}\label{eq:e-mhd-nonlinear-pert} 
\begin{split}
 \rd_t b + (\bfPi\cdot\nabla) (\nabla\times b) - ((\nabla\times b)\cdot\nabla) \bfPi + (b\cdot\nabla)(\nabla\times \bfPi) - ((\nabla\times\bfPi)\cdot\nabla) b = \err_b,
\end{split}
\end{equation} \begin{equation*} 
\begin{split}
 \err_b := -(b\cdot\nabla)\nabla\times b + (\nabla\times b)\cdot\nabla b .
\end{split}
\end{equation*} Next, we recall the $z$-independent approximate solution $\tb$: \begin{equation*} 
\begin{split}
\rd_t\tb + (\tilde{\bfB}\cdot\nabla) (\nabla\times \tb) - ((\nabla\times \tb)\cdot\nabla) \tilde{\bfB} = \err_{\tb},
\end{split}
\end{equation*}\begin{equation*} 
\begin{split}
&\sup_{t \in [0,\delta]} \nrm{\err_{\tb}}_{L^2_{x,y}} \lesssim \ell^{-2}\nrm{g_0}_{L^2_{x,y}},
\end{split}
\end{equation*} with  $\tilde{\bfB} =  f(r)\rd_\theta$. As before, the goal is to estimate the time derivative of $\brk{b, \chi\tb}$ and the only additional term relative to the linear case from \ref{subsubsec:gei-emhd} is simply the contribution from the nonlinear error, which can be bounded by \begin{equation*} 
\begin{split}
& \left|\brk{\err_b, \chi\tb}\right| \lesssim \ell^{\frac{1}{2}} \nrm{\err_b}_{L^2}\nrm{g_0}_{L^2_{x,y}}. 
\end{split}
\end{equation*} We now observe that \begin{equation*} 
\begin{split}
& \nrm{b}_{H^{s_0}} \le \nrm{\bfB}_{H^{s_0}} + \nrm{\bfPi}_{H^{s_0}} \lesssim_{s_0} M + \ell^{\frac{1}{2}-s_0}
\end{split}
\end{equation*} where we have used the assumption $M\sup_{t \in [0,\delta]} \nrm{\bfB}_{H^{s_0}} \le M$ and the a priori estimates from \ref{subsec:apriori}, by shrinking $\dlt >0$ if necessary (in a way depending only on $\ell$). We then estimate \begin{equation*} 
\begin{split}
& \nrm{(b\cdot\nabla)\nabla\times b}_{L^2} \lesssim \nrm{\nabla^2 b}_{L^\infty}\nrm{b}_{L^2} \lesssim_{s_0} (M + \ell^{\frac{1}{2}-s_0})\nrm{b}_{L^2} 
\end{split}
\end{equation*} and \begin{equation*} 
\begin{split}
& \nrm{(\nabla\times b)\cdot\nabla b }_{L^2} \lesssim \nrm{\nabla b}_{L^4}^2  \lesssim_{s_0} (M + \ell^{\frac{1}{2}-s_0})\nrm{b}_{L^2}  ,
\end{split}
\end{equation*} where we have used simple estimates \begin{equation*} 
\begin{split}
&\nrm{\nabla^2 b}_{L^\infty} \lesssim  \nrm{b}_{H^{s_0}}, \quad \nrm{\nabla b}_{L^4} \lesssim \nrm{b}_{L^2}^{\frac{1}{2}} \nrm{b}_{H^{s_0}}^{\frac{1}{2}} 
\end{split}
\end{equation*} which hold for $s_0 > 7/2$. This shows that \begin{equation*} 
\begin{split}
& \nrm{\err_b}_{L^2} \lesssim (M + \ell^{\frac{1}{2}-s_0})\nrm{b}_{L^2}  .
\end{split}
\end{equation*}Now recalling the estimate \eqref{eq:gei-e-mhd}, we have \begin{equation*} 
\begin{split}
& \left| \frac{\ud}{\ud t} \brk{b, \chi\tb} \right| \lesssim \ell^{\frac{1}{2}}\left( M\ell^{-\frac{1}{2}} + \ell^{-s_0} + \ell^{-2}  \right)\nrm{b}_{L^2} \nrm{g_0}_{L^2_{x,y}}. 
\end{split}
\end{equation*} Since $\brk{\err_b,b} = 0$, we may shrink $\dlt > 0$ if necessary to guarantee that $\nrm{b}_{L^2} \lesssim 2\nrm{b_0}_{L^2}$ on the time interval $[0,\dlt]$. Then, proceeding in the same way as in \ref{subsubsec:growth-Sobolev}, we conclude that \begin{equation*} 
\begin{split}
& M + \ell^{\frac{1}{2}-s_0} \gtrsim_{s_0} \nrm{b(t)}_{H^{s_0}} \gtrsim_{s_0} e^{c_0  s_0\lmb t} \nrm{b_0}_{ H^{s_{0}} } \gtrsim_{s_0,s} \eps e^{c_0  s_0\lmb t} \lmb^{ s_{0} - s } 
\end{split}
\end{equation*} by further shrinking $\dlt > 0$ if necessary, but in a way depending only on $f$ and $M$. We obtain a contradiction since we may fix some $0 < t^* < \dlt $ and take $\lmb \rightarrow +\infty$. The proof is complete. \hfill \qedsymbol

\subsection{Case of \eqref{eq:hall-mhd}}\label{subsec:nonlinear-hallmhd}

We take initial data $(\bfu_0,\bfB_0) = (0, \bfPi_0 + b_0)$ where $\bfPi_0, b_0$ are the same as in the case of \eqref{eq:e-mhd}. In particular, $b_{0}$ is normalized in a way that $\nrm{b_{0}}_{H^{s}}<\eps$. 

Assume that there exist some $\dlt > 0, s_0 > 7/2$, and a solution $(\bfu,\bfB)$ belonging to $L^\infty_t([0,\dlt]; H^{s_0-1} \times H^{s_0})$ such that $\bfu(t = 0) = \bfu_0$ and $\bfB(t = 0) = \bfPi_0 + b_0$. We set $u(t) = \bfu(t) - \bfV(t)$ and $b(t) = \bfB(t) - \bfPi(t)$. Due to the a priori estimates in Proposition \ref{prop:hall-mhd-fradiss-axi-lwp}, we have $\nrm{\nabla\bfV}_{H^{s_0}} + \nrm{\bfPi}_{H^{s_0}} \le 2 \nrm{\bfPi_0}_{H^{s_0}}$ on $[0,\dlt]$ by taking smaller $\dlt = \dlt(\ell)>0$ if necessary; in particular, the pair $(u,b)$ is well-defined on $[0,\dlt]$ and belongs to $L^\infty_t([0,\dlt];H^{s_0-1} \times H^{s_0})$. As before, setting $M:= \sup_{t \in [0,\delta]} \left( \nrm{\bfB}_{H^{s_0}}  + \nrm{\bfu}_{H^{s_0-1}} \right)$ gives that \begin{equation*}
\begin{split}
\nrm{u}_{H^{s_0-1}} + \nrm{b}_{H^{s_0}} \lesssim_{s_0} M + \ell^{\frac{1}{2}-s_0}.
\end{split}
\end{equation*} We then have that $(u,b)$ satisfies 
\begin{equation}\label{eq:axisym-hall-pert-nonlin}
\left\{
\begin{aligned}
& \rd_t u + \bfV \cdot \nabla u + \nabla \bfp - \nu\lap u - (\nabla\times b)\times \bfPi = -u\cdot\nabla\bfV + (\nabla\times\bfPi)\times b  + \errh_u \\
& \rd_t b + \bfPi \cdot\nabla (\nabla\times b) - (\nabla\times b) \cdot \nabla \bfPi - (\nabla \times \bfPi) \cdot \nabla b + \bfV\cdot \nabla b - \bfPi\cdot\nabla u \\
& \quad  = -b\cdot\nabla(\nabla\times\bfPi) + b\cdot\nabla\bfV - u\cdot\nabla\bfPi +\errh_b , \\
\end{aligned}
\right.
\end{equation} where \begin{equation} \label{eq:p-si2}
\begin{split}
\bfp := \sum_{i,j} R_{i} R_{j} ( \bfu^i \bfu^j) - \sum_{i,j} R_{i} R_{j} (\bfB^i\bfB^j) - \frac{|\bfB|^2}{2}
\end{split}
\end{equation} and  \begin{equation}\label{eq:hall-mhd-nonlin-errors}
\left\{
\begin{aligned}
	\errh_u &:= -u\cdot\nabla u + b\cdot\nabla b ,\\
	\errh_b &:= -(b\cdot\nabla)\nabla\times b + (\nabla\times b)\cdot\nabla b + b\cdot\nabla u - u\cdot\nabla b.
\end{aligned}
\right.
\end{equation} Now with \begin{equation*} 
\begin{split}
& \left| \brk{\errh_b, \chi\tb} \right|+\left| \brk{\errh_u, \chi\tu} \right| \lesssim  \ell^{\frac{1}{2}}(M\ell^{-\frac{1}{2}} + \ell^{-s_0}) (\nrm{u}_{L^2}+\nrm{b}_{L^2})\nrm{g_0}_{L^2_{x,y}},
\end{split}
\end{equation*} and recalling the estimates from Proposition \ref{prop:hall-mhd-fradiss-axi-lwp} and Lemma \ref{lem:hall-mhd-fradiss-axi-small-t}, we obtain the bound \begin{equation*} 
\begin{split}
& \left| \frac{\ud}{\ud t} (\brk{u,\chi\tu -  \mathrm{div}^{-1}(\chi'\tu^z)} + \brk{b,\chi\tb}) \right| \lesssim \ell^{\frac{1}{2}}(M\ell^{-\frac{1}{2}} + \ell^{-s_0} + \ell^{-m_0}(1 + \lmb t)^2 e^{c_{1} \lmb t}  ) (\nrm{u}_{L^2}+\nrm{b}_{L^2})\nrm{g_0}_{L^2_{x,y}},
\end{split}
\end{equation*} for some large $m_0$, which can be taken to be $10$. Then, proceeding as in Section \ref{subsec:hallmhd-axisym}, on the time interval $[0,T]$ with $T = c  \lmb^{-1}\ln\lmb $, \begin{equation*} 
\begin{split}
& \brk{b,\chi\tb} \ge (1-c)\ell^{\frac{1}{2}} \nrm{b_0}_{L^2} \nrm{g_0}_{L^2_{x,y}} 
\end{split}
\end{equation*} by taking $\lmb \gg 1$ depending on $\ell, M, s_0, m_0,$ and also on $\nu$ when $\nu > 0$. Then, arguing as in \ref{subsubsec:growth-Sobolev}, we have that $\nrm{b(t)}_{H^{s_0}} \gtrsim_{s_{0}} \lmb^{s_0}  e^{c_0 s_{0} \lmb t} \nrm{b_0}_{L^2} \gtrsim_{s_{0}, s} \eps\lmb^{s_0-s}  e^{c_0 s_{0} \lmb t}$ so that at $t = T$, \begin{equation*} 
\begin{split}
&M+\ell^{\frac12-s_{0}} \nrm{b(T)}_{H^{s_0}} \gtrsim_{s_{0},s} \eps \lmb^{(1+\alp_{0})s_0 - s} 
\end{split}
\end{equation*} for an absolute constant $\alp_{0}>0$. From the assumption $(1+\alp_{0})s_0 >s$, we get a contradiction by taking $\lmb \rightarrow +\infty$. \hfill \qedsymbol

\section{Nonexistence}\label{sec:nonlinear-nonexist}

\subsection{Case of \eqref{eq:e-mhd}}

We now prove Theorem \ref{thm:illposed-strong2-prime} in the \eqref{eq:e-mhd} case. We may assume that $(\bbT,\bbR)_z = \bbR_z$ since the case of $\bbT_z$ was covered in \cite{JO1}. Let $s > 7/2, \dlt > 0$ and $\eps > 0$ be given by the statement, and for simplicity fix an axisymmetric background of the form \begin{equation*} 
\begin{split}
& \tilde{\bfPi}_0 = f_0(r)\chi_0(z/10)\rd_\tht, 
\end{split}
\end{equation*} where  $f_0 \in C^\infty_{comp}$ satisfies the assumptions in Section \ref{subsec:wkb} with $\ell = 1/40$ and $r_{0} = 1/20 = 2\ell$; more specifically, we may choose $f_{0}$ to satisfy $f_0(r) = r - 1/20$ for $1/40 \le r \le 3/40$.  Similarly, we take $0 \le \chi_0 \le 1$ to be a smooth bump function supported in $[-1/100,1/100]$ and $\chi_0 = 1$ in $[-1/200,1/200]$. In particular, $\tilde{\bfPi}_0 \in C^\infty_{comp}$. We then take \begin{equation*} 
\begin{split}
& \bfPi_0 = \sum_{k = k_0}^{\infty} \tilde{\bfPi}_0^{(k)} := \sum_{k = k_0}^{\infty} 2^{-sk} \tilde{\bfPi}_0(2^k x, 2^k y, 2^k(z - z_k)), \quad z_k := 1 - 2^{-k}  
\end{split}
\end{equation*} so that \begin{equation*} 
\begin{split}
& \nrm{\tilde{\bfPi}_0^{(k)}}_{H^s} \lesssim 2^{-\frac{k}{2}} 
\end{split}
\end{equation*} and in particular, by taking $k_0 \gg 1$ we may ensure that $\nrm{\bfPi_0}_{H^s} < \frac{\eps}{2}$. Here we remark in advance that $k_0$ will be taken to be sufficiently large depending on given parameters $\eps, \dlt, s$ and also on a large parameter $N = N(s) \gg 1$ to be introduced below. Note that (by redefining $\chi$ if necessary) $\bfPi_0$ is compactly supported and the supports of $\tilde{\bfPi}_0^{(k)}$ are disjoint from each other; we shall also write \begin{equation*} 
\begin{split}
& \tilde{\bfPi}^{(k)}_0 = f_k(r)\tilde{\chi}_k(z) \rd_\tht, 
\end{split}
\end{equation*} where \begin{equation*} 
\begin{split}
& f_k(r) := 2^{-sk}f_0(2^kr), \quad \tilde{\chi}_k(z) := \chi_0(\frac{2^k}{10}(z-z_k)).
\end{split}
\end{equation*} Note that $\ell_{f_{k}}\sim 2^{-k}$.  Similarly, we set \begin{equation*} 
\begin{split}
&\chi_k(z) := \chi_0(2^k(z-z_k))
\end{split}
\end{equation*} to be a cutoff near $z = z_k$. Note that the lifespan of $\tilde{\bfPi}^{(k)}_0$ under \eqref{eq:e-mhd-axisym-background} increases as $k \rightarrow +\infty$ and there is some non-empty interval of time during which the supports of the solutions $\{ \tilde{\bfPi}^{(k)} \}_{k \ge 0}$ do not overlap with each other. This guarantees that the unique local-in-time solution $\bfPi(t)$ of \eqref{eq:e-mhd-axisym-background} with initial data $\bfPi_0$ (with some abuse of notation) is given by the superposition \begin{equation*} 
\begin{split}
& \bfPi(t) = \sum_{k = k_0}^{\infty} \tilde{\bfPi}^{(k)}(t) := \sum_{k = k_0}^{\infty} 2^{-sk} \tilde{\bfPi}(2^{-(s-1)k}t, 2^k x, 2^k y, 2^k(z - z_k))
\end{split}
\end{equation*} where $\tilde{\bfPi}(t)$ is the solution of \eqref{eq:e-mhd-axisym-background} with initial data $\tilde{\bfPi}_0$, which is smooth for an $O(1)$ interval of time. Moreover, by taking $k_0 \gg 1$, it is guaranteed that $\bfPi(t)$ remains smooth in $[0,\dlt]$ for any $\dlt > 0$ and  $\nrm{\tilde{\bfPi}^{(k)}}_{L^\infty([0,\dlt];H^s)} \le 2\nrm{\tilde{\bfPi}^{(k)}_0}_{H^s}$; in particular, $\nrm{\bfPi(t)}_{L^\infty([0,\dlt];H^s)} < \eps$. We can also make sure that  $\chi_k\rd_z \bfPi(t) \equiv 0$ for any $k \ge k_0$ on $[0,\dlt]$. 

We now rescale the $z$-independent degenerating wave packet solutions $\{\tb_{(\lmb)} \}$ associated with $f_0(r)\rd_\tht$  as follows: \begin{equation}\label{eq:dwp-rescaled} 
\begin{split}
& \tb_k(t) := 2^{\frac{k}{2}} \tb_{(2^{-k}\lmb_k)}(2^{-sk}2^{2k}t, 2^kx,2^ky), \quad \lmb_k := 2^{ Nk}
\end{split}
\end{equation} where $N = N(s) \gg 1$ is to be specified later. Consider the initial data \begin{equation}\label{eq:nonexist-emhd-initial}
\begin{split}
\bfB_0 = \bfPi_0 + \sum_{k = k_0}^\infty 2^{-k} (2^{-k}\lmb_k)^{-s} \left( \chi_k \tb_k(0) -  \mathrm{div}_{U_k}^{-1} (\chi_k' \tb^{z}_{k}(0)) \right);
\end{split}
\end{equation} here $U_k$ is a cube of side-length $O(2^{-k})$ containing the support of $\chi_k\tb_k(t)$ for $t \in [0,\dlt]$. For convenience we shall set $b_k =  \chi_k \tb_k  -  \mathrm{div}_{U_k}^{-1} (\chi_k' \tb^{z}_{k})$. It can be arranged that the cubes $U_k$ do not overlap with each other.  Note that we have  \begin{equation*} 
\begin{split}
(2^{-k}\lmb_k)^{-s}\nrm{\chi_k \tb_k(0) -   \mathrm{div}_{U_k}^{-1} (\chi_k' \tb^{z}_{k}(0))}_{H^s} \lesssim & \nrm{ (2^{-k}\lmb_k)^{-s}\chi_k \tb_k(0) }_{H^s} \lesssim 1
\end{split}
\end{equation*} and thanks to the extra factor of $2^{-k}$, $\bfB_0 \in H^s$ with $\nrm{\bfB_0}_{H^s} < \eps $ by taking $k_0 \gg 1$. From the normalizations in \eqref{eq:dwp-rescaled}, we have that \begin{equation}\label{eq:dwp-rescaled-estimates}
\begin{split}
\nrm{\tb_k(t)}_{L^2_{x,y}} + \nrm{\err_{\tb}(t)}_{L^2_{x,y}} \lesssim 2^{C_sk}
\end{split}
\end{equation} on $[0,\dlt]$ with some $C_s > 0$ depending only on $s$. The degeneration estimate takes the form \begin{equation*} 
\begin{split}
& \nrm{\tb_k(t)}_{L^2_{\tht} L^1_{rdr}} \lesssim 2^{C_sk} \exp(-2^{-c_sk}\lmb_k t). 
\end{split}
\end{equation*} 

Now that we have fixed our choice of initial data, towards a contradiction, assume that  there is a solution $\bfB \in L^\infty_t([0,\dlt];H^s)$ to \eqref{eq:e-mhd} with $\bfB(t = 0)  = \bfB_0$ given by \eqref{eq:nonexist-emhd-initial}. Denoting $b(t) := \bfB(t) - \bfPi(t)$, we have that $b(t)$ satisfies \begin{equation}\label{eq:e-mhd-pert} 
\begin{split}
&   \rd_t b + (b\cdot\nabla)(\nabla\times\bfPi) - (\nabla\times\bfPi)\cdot\nabla b + (\bfPi\cdot\nabla)(\nabla\times b) - (\nabla\times b)\cdot\nabla\bfPi   = \nabla \times ((\nabla\times b)\times b). 
\end{split}
\end{equation}

Before we proceed to the proof of (localized) generalized energy identity, we need to localize the energy identity for $b$ itself. Unfortunately, the $L^2$-norm of $b$ localized to the support of $\tilde{\bfPi}^{(k)}$ does not satisfy an energy identity by itself, as there are contributions from neighboring pieces. As in \cite[Section 5.2]{JO1}, we use cutoff functions with fast decaying tails which can accommodate such interactions. To this end, we prepare a $C^\infty$ positive function $\widetilde{\chi} : \mathbb{R} \rightarrow \mathbb{R}_+$ with the following properties: \begin{itemize}
	\item $\widetilde{\chi}(z) =1$ for $z \in [-1/8,1/8]$,
	\item $|\widetilde{\chi}'(z)| \le |\widetilde{\chi}(z)|$ for all $z \in \mathbb{R}$, and
	\item  $\widetilde{\chi} \le 2^{-|z|}$ for $z> 1/2$. 
\end{itemize} Note that instead of requiring $\widetilde{\chi}$ to have compact support, we contend with exponential decay; this is why we can pointwise bound the derivative in terms of the function itself. We then define rescaled cutoffs by \begin{equation*} 
\begin{split}
&\widetilde{\chi}_k(z) = \widetilde{\chi}(2^k(z-z_k)).
\end{split}
\end{equation*} Note that \begin{itemize}
	\item $ \tilde{\bfPi}^{(k)}\widetilde{\chi}_k = \tilde{\bfPi}^{(k)}$ and $\bfPi\widetilde{\chi}_k' = 0$, 
	\item $\widetilde{\chi}_k = 1$ on the support of $\chi_k$; in particular $\widetilde{\chi}_k \chi_k'  = \widetilde{\chi}_k'\chi_k = 0$. 
\end{itemize} To proceed, we multiply both sides of \eqref{eq:e-mhd-pert} by $\widetilde{\chi}_k$ and test against $\widetilde{\chi}_kb$. The first four terms involving $\bfPi$ are  easy to handle: we have 
\begin{equation*} 
\begin{split}
& \left| \brk{ \widetilde{\chi}_k  (b\cdot\nabla)(\nabla\times\bfPi) , \widetilde{\chi}_kb} \right|  \lesssim \nrm{\bfPi}_{W^{2,\infty}}\nrm{\widetilde{\chi}_kb}_{L^2}^2 ,
\end{split}
\end{equation*} \begin{equation*} 
\begin{split}
& \left| \brk{ \widetilde{\chi}_k (\nabla\times\bfPi)\cdot\nabla b , \widetilde{\chi}_kb} \right| \lesssim \nrm{\bfPi}_{W^{1,\infty}}\nrm{\widetilde{\chi}_k'b}_{L^2}\nrm{\widetilde{\chi}_kb}_{L^2}\lesssim 2^k \nrm{\bfPi}_{W^{1,\infty}}\nrm{\widetilde{\chi}_kb}_{L^2}^2 
\end{split}
\end{equation*} (using the pointwise bound $|\widetilde{\chi}_k'(z)| \le 2^k|\widetilde{\chi}_k(z)|$), and similarly \begin{equation*} 
\begin{split}
&  \left| \brk{ \widetilde{\chi}_k\left[   (\bfPi\cdot\nabla)(\nabla\times b) - (\nabla\times b)\cdot\nabla\bfPi \right] , \widetilde{\chi}_kb} \right| \\
&\qquad  \lesssim (2^k \nrm{\bfPi}_{W^{1,\infty}} + 2^{2k}\nrm{\bfPi}_{L^\infty})\nrm{\widetilde{\chi}_kb}_{L^2}^2 + 2^k\nrm{\bfPi}_{L^\infty} \nrm{\widetilde{\chi}_kb}_{L^2}\nrm{\nabla(\widetilde{\chi}_kb)}_{L^2} . 
\end{split}
\end{equation*} We further bound the last term as follows: \begin{equation*} 
\begin{split}
&2^k\nrm{\bfPi}_{L^\infty}  \nrm{\widetilde{\chi}_kb}_{L^2}\nrm{\nabla(\widetilde{\chi}_kb)}_{L^2}  \lesssim 2^k\nrm{\bfPi}_{L^\infty} \nrm{\widetilde{\chi}_kb}_{L^2}^{2-\frac{1}{s}} \nrm{\widetilde{\chi}_kb}_{H^s}^{\frac{1}{s}} \lesssim 2^{2k}\nrm{\bfPi}_{L^\infty} \nrm{b}_{H^s}^{\frac{1}{s}} \nrm{\widetilde{\chi}_kb}_{L^2}^{2-\frac{1}{s}}. 
\end{split}
\end{equation*} Next, similar estimates hold for the nonlinear term: we have \begin{equation*} 
\begin{split}
& \left|\brk{\widetilde{\chi}_k \nabla\times((\nabla\times b)\times b), \widetilde{\chi}_k b}\right| \lesssim \left|\brk{\widetilde{\chi}_k' b\nabla b, \widetilde{\chi}_k b}\right| + \nrm{\nabla^2b}_{L^\infty} \nrm{\widetilde{\chi}_kb}_{L^2}^2 
\end{split}
\end{equation*} (since there is no $\rd_{zz}$ derivative) and therefore \begin{equation*} 
\begin{split}
& \left|\brk{\widetilde{\chi}_k \nabla\times((\nabla\times b)\times b), \widetilde{\chi}_k b}\right| \lesssim \nrm{\nabla^2b}_{L^\infty} \nrm{\widetilde{\chi}_kb}_{L^2}^2 + 2^k\nrm{\nabla b}_{L^\infty} \nrm{\widetilde{\chi}_kb}_{L^2}^2 .
\end{split}
\end{equation*} Collecting the estimates, we have  \begin{equation}\label{eq:local-energy-estimate}
\begin{split}
\frac{\ud}{\ud t}\nrm{\widetilde{\chi}_k b}_{L^2} \lesssim 2^{2k}\left( \nrm{\widetilde{\chi}_k b}_{L^2}^{1 - \frac{1}{s}} + \nrm{\widetilde{\chi}_k b}_{L^2} \right) 
\end{split}
\end{equation} where the  {implicit} constant  depends on $\nrm{b}_{L^\infty_t H^s}$ and $\nrm{\bfPi}_{W^{2,\infty}}$. At the initial time, we estimate the local energy: \begin{equation*}
\begin{split}
\nrm{\widetilde{\chi}_k b(t = 0)}_{L^2}^2 =  c{2^{-4k}} \lmb_k^{-2s} + \sum_{k_0 \le k' , k' \ne k }  {2^{-2k'}} \lmb_{k'}^{-2s}\nrm{\widetilde{\chi}_k b_{k'}(t = 0)}^2_{L^2} 
\end{split}
\end{equation*} where $c \sim 1$ is from the choice of $\widetilde{\chi}$.  Note that the contribution  for $ k' > k$ is negligible relative to $ {2^{-4k}} \lmb_k^{-2s}$ from the decay of $ {2^{-2k'}} \lmb_{k'}^{-2s}$ in $k'$. On the other hand, for $k' < k$, we use the decay of $\chi_k$: for any $k' < k$, the support of $b_{k'}$ is separated from $z_k$ by at least $c2^{-\frac{k}{2}}$ with $c > 0$ independent of $k$. Hence, \begin{equation*}
\begin{split}
|\widetilde{\chi}_k| \lesssim 2^{-c2^{\frac{k}{2}}} \lesssim_{N,s} 2^{-4Nsk} \ll   {2^{- 4k}} \lmb_k^{-2s} 
\end{split}
\end{equation*} on the support of $b_{k'}$ with any $k' > k  $, and we obtain that $\nrm{\chi_kb(t = 0)}_{L^2}   {\aleq 2^{-2k}} \lmb_k^{-s}$ by choosing $k_0 $ sufficiently large with respect to $N, s$. Using this together with \eqref{eq:local-energy-estimate} yields that \begin{equation}\label{eq:localenergy-time}
\begin{split}\nrm{\widetilde{\chi}_kb {(t)}}_{L^2} \lesssim_s \left(  {2^{-\frac{k}{s}}} \lmb_k^{-1} + 2^{2k}t \right)^s \lesssim_s  {2^{-2k}} \lmb_k^{-s} + 2^{2ks}t^s. 
\end{split}
\end{equation} 

We are in a position to prove a localized version of the generalized energy estimate: recall that \begin{equation*} 
\begin{split}
&\rd_t \tb + (\tilde{\bfPi}\cdot\nabla)(\nabla\times \tb) - ((\nabla\times\tb)\cdot\nabla)\tilde{\bfPi} = \err_{\tb}. 
\end{split}
\end{equation*} where \begin{equation*} 
\begin{split}
& \tilde{\bfPi} = \sum_{k = k_0}^\infty \tilde{\bfPi}_k =  \sum_{k = k_0}^\infty f_k(r)\rd_\tht,\quad \err_{\tb} = \sum_{k = k_0}^\infty \err_{\tb,k}
\end{split}
\end{equation*} with $\err_{\tb,k}$ being the error associated with $\tb_k$. We use the above and \eqref{eq:e-mhd-pert} to compute \begin{equation*} 
\begin{split}
&\frac{\ud}{\ud t} \brk{\widetilde{\chi}_kb, \chi_k \tb_k}  = -\brk{ \widetilde{\chi}_k\left[  (b\cdot\nabla)(\nabla\times\bfPi) - (\nabla\times\bfPi)\cdot\nabla b + (\bfPi\cdot\nabla)(\nabla\times b) - (\nabla\times b)\cdot\nabla\bfPi \right]  , \chi_k \tb_k} \\
& \qquad + \brk{\widetilde{\chi}_k \nabla \times ((\nabla\times b)\times b) , \chi_k \tb_k } + \brk{\widetilde{\chi}_kb, -(\tilde{\bfPi}_k\cdot\nabla)(\nabla\times \tb_k) + ((\nabla\times\tb_k)\cdot\nabla)\tilde{\bfPi}_k} + \brk{\widetilde{\chi}_kb, \err_{\tb,k}} . 
\end{split}
\end{equation*} We estimate terms similarly as in the proof of the localized energy estimate; using that $\tb_k$ vanishes on the support of $\widetilde{\chi}_k'$ and that $\bfPi = \tilde{\bfPi}_k$ on the support of $\chi_k$ cancels several terms. We end up (see \cite[Section 5.2]{JO1} for details) with \begin{equation*} 
\begin{split}
& \left| \frac{\ud}{\ud t} \brk{\widetilde{\chi}_kb, \chi_k \tb_k}  \right| \lesssim 2^{C_sk} \nrm{\widetilde{\chi}_kb}_{L^2} \nrm{\chi_k\tb_k}_{L^2} 
\end{split}
\end{equation*} where the implicit constant depends on $\nrm{b}_{L^\infty_tH^s}$ and $\nrm{\bfPi}_{W^{2,\infty}} + \nrm{\tilde{\bfPi}}_{W^{2,\infty}}$. Combining \eqref{eq:localenergy-time} with the above gives that \begin{equation*} 
\begin{split}
\left|\brk{\widetilde{\chi}_k b(t),\chi_k \tb_{k}(t)} -2^{-ck} \lmb_k^{-s}\right| \lesssim_s   {2^{C_{s} k}} \int_0^t \left(  {2^{-k}} \lmb_k^{-s} + 2^{2ks}t'^s \right) dt' \lesssim_s  {2^{C_{s} k}}( {2^{-k}} \lmb_k^{-s}t + 2^{2ks}t^{s+1}), 
\end{split}
\end{equation*} so that  $\brk{\widetilde{\chi}_k b, \chi_k \tb_{k} }  \ge {2^{-(ck+1)}} \lmb_k^{-s}$ on  $[0,t^*_k]$ with 
\begin{equation} \label{eq:t-ast-k}
{t^{*}_{k} = 2^{-c_{s} k} \lmb_{k}^{-\frac{s}{s+1}}}.
\end{equation}
Taking $k_0 $ larger if necessary, we have $t^*_k \le \delta$ for all $k \ge k_0$. Then 
\begin{equation*} 
\begin{split}
2^{-(ck+1)} \lmb_{k}^{-s} &\aleq 
\nrm{\chi_k \tb_{k}(t^{*}_{k})}_{L^{2}_{\tht, z} L^{1}_{rdr}} 
\nrm{\widetilde{\chi}_k b(t^{*}_{k})}_{L^{2}_{\tht,z} L^{\infty}_{r}} \aleq \nrm{\chi_k \tb_{k}(t^{*}_{k})}_{L^{2}_{\tht,z} L^{1}_{rdr}} 
\nrm{\widetilde{\chi}_k b(t^{*}_{k}) }_{L^{2}}^{1-\frac{1}{2s}} 
\nrm{\widetilde{\chi}_k b(t^{*}_{k}) }_{H^{s}}^{\frac{1}{2 s}}.
\end{split}
\end{equation*}
By the degeneration property  \eqref{eq:localenergy-time}, it follows that
\begin{equation*}
\nrm{\widetilde{\chi}_{k} b(t^{*}_{k})}_{H^{s}} 
\ageq_{s} 2^{-C_{s} k} \lmb_{k}^{-  \frac{3s^{2}}{s+1}} \exp\left( 2^{-c_{s} k} \lmb_{k}^{\frac{1}{s+1}}\right).
\end{equation*} 
By the algebra property of $H^{s}$, we may replace the LHS by $\nrm{b(t^{*}_{k})}_{H^{s}}$ (with a different $C_{s}$). Taking $N = N(s) \gg 1$ (recall that $\lmb_k = 2^{Nk}$), we may ensure that
\begin{equation*}
\nrm{b(t^{*}_{k})}_{H^{s}} \ageq_{s} 2^{c_{s} k}
\end{equation*}
for some $c_{s} > 0$ independent of $k \geq k_{0}$, which contradicts boundedness of $\nrm{b}_{L^\infty_t H^s}$.  \hfill \qedsymbol

\subsection{Case of \eqref{eq:hall-mhd}}

We now give the proof of nonexistence in the Hall-MHD case. The general idea is the same as in the E-MHD case; fix some background magnetic field with countably many degenerate points and place a degenerating wave packet solution near each degenerate point to conclude unbounded growth in any short amount of time. Due to the strong non-locality in the velocity interactions (caused by the pressure term), we are only able to handle data with non-compact support in the Hall-MHD case. But as we shall see below, this will significantly simplify the analysis as we can take the points of degeneracy very far from each other. 

To begin, let $s > 7/2$, $\dlt > 0$, and $\eps > 0$ be given and take $\tilde{\bfPi}_0$ exactly as in the above case of \eqref{eq:e-mhd}. We then take \begin{equation}\label{eq:hallmhd-initial-mag} 
\begin{split}
& {\bfPi}_0 = \sum_{k = k_0}^{\infty} \tilde{\bfPi}_0^{(k)} = \sum_{k = k_0}^{\infty}  2^{-k} \tilde{\bfPi}_0(x,y,z-z_k),\quad z_k = z_{k-1} + \mu^k 
\end{split}
\end{equation} where $\mu = \mu(N,s) \gg 1$ and $k_0 = k_0(N,s,\eps,\dlt) \gg 1 $ are  to be determined. Note that there are no spatial rescalings. Next, we take the $z$-independent degenerating wave packet solutions $\{ \tb_{(\lmb)} , \tu_{(\lmb)} \}$ associated with $f_0(r)\rd_\tht$ and set \begin{equation*} 
\begin{split}
& \tb_k(t) := \tb_{(\lmb_k)}(2^{-k}t, x, y, z- z_k), \quad \tu_k(t) := \tu_{(\lmb_k)}(2^{-k}t, x, y, z-z_k)
\end{split}
\end{equation*} where $\lmb_k = 2^{Nk}$ with $N = N(s) \gg 1$ to be determined. This time, we simply translate $\chi_0$ (which is the same as the previous section) to define $\chi_k(z) := \chi_0( z- z_k).$ Then, $U_k$ is defined to be a cube of side-length $O(1)$ containing the support of $\chi_k\tb_k(t)$ for $t \in [0,\dlt]$. It is easy to guarantee that the cubes $U_k$ do not overlap with each other, by taking $\mu >1$ in \eqref{eq:hallmhd-initial-mag} larger if necessary. We then take the initial data $(\mathbf{0}, \bfB_0)$ to \eqref{eq:hall-mhd} where \begin{equation}\label{eq:nonexist-hallmhd-initial}
\begin{split}
\bfB_0 := \bfPi_0 + \sum_{k = k_0}^\infty 2^{-k} \lmb_k^{-s} \left( \chi_k\tb_k(0) -   \mathrm{div}_{U_k}^{-1}  (\chi_k'\tb^{z}_{k}(0)) \right).
\end{split}
\end{equation} Clearly, we have $\nrm{\bfB_0}_{H^s} \le\eps $ by taking $k_0 = k_0(\eps) \gg 1$. 


We use the letter $\bfV$ to denote the corresponding velocity field, which is smooth on the time interval $[0,\dlt]$. Next, applying Lemma \ref{lem:hall-mhd-fradiss-axi-small-t} with a choice of $m = 10$ gives smallness in time estimate \eqref{eq:small-t-V}, \eqref{eq:small-t-Pi}. 

Now we assume that there exists a solution $(\bfu,\bfB) \in L^\infty_t([0,\dlt];H^{s-1} \times H^s)$ with initial data $(0,\bfB_0)$. Setting $u = \bfu - \bfV$ and $b = \bfB - \bfPi$, we have that $(u,b)$ belongs to $L^\infty([0,\dlt];H^{s-1} \times H^s)$ and satisfies \eqref{eq:axisym-hall-pert-nonlin}--\eqref{eq:p-si2}. We note the following simple estimate for the pressure: \begin{equation*}
\begin{split}
\nrm{\bfp}_{L^2} \lesssim \nrm{|\bfu|^2}_{L^2} + \nrm{|\bfB|^2}_{L^2} \lesssim \nrm{\bfu}_{H^{s-1}}^2 + \nrm{\bfB}_{H^s}^2 
\end{split}
\end{equation*} where we have used the Sobolev embedding $H^{s-1} \subset L^\infty$. Next, we take cutoff functions \begin{equation*}
\begin{split}
\widetilde{\chi}_k(z) = \chi_0(2 \mu^{-k}(z-z_k)). 
\end{split}
\end{equation*} As we shall see below, the proof of a local version of the energy identity for $(u,b)$ is straightforward since we may take $\mu \gg 1$: simply note that $\nrm{\widetilde{\chi}_k'}_{L^\infty} \lesssim \mu^{-k}$, and we now compute \begin{equation*}
\begin{split}
\frac{1}{2}\frac{\ud}{\ud t} \left( \nrm{\widetilde{\chi}_k u}_{L^2}^2 +\nrm{\widetilde{\chi}_k b}_{L^2}^2  \right)
\end{split}
\end{equation*} using \eqref{eq:axisym-hall-pert-nonlin}; after straightforward estimates using integration by parts, we first obtain \begin{equation*}
\begin{split}
&\left| \frac{1}{2}\frac{\ud}{\ud t} \nrm{\widetilde{\chi}_k u}_{L^2}^2 - \brk{\widetilde{\chi}_k(\bfPi\cdot\nabla b) ,\widetilde{\chi}_ku}  - \brk{\widetilde{\chi}_k\errh_{u},\widetilde{\chi}_k u } \right| \\
&\qquad \lesssim \nrm{\widetilde{\chi}_k'}_{L^\infty} \nrm{\widetilde{\chi}_k u}_{L^2} \left( \nrm{\bfp}_{L^2} + \nu \nrm{\nabla u}_{L^2} +1 \right) + \nrm{\widetilde{\chi}_k u}_{L^2}(\nrm{\widetilde{\chi}_k u}_{L^2}+\nrm{\widetilde{\chi}_k b}_{L^2}) \\
&\qquad \lesssim (\mu^{-k} + \nrm{\widetilde{\chi}_k u}_{L^2}+\nrm{\widetilde{\chi}_k b}_{L^2})\nrm{\widetilde{\chi}_k u}_{L^2} ,
\end{split}
\end{equation*} where the implicit constant depends on $M + \nrm{\bfV}_{W^{1,\infty}} + \nrm{\bfPi}_{W^{2,\infty}}$. Similarly, \begin{equation*}
\begin{split}
&\left| \frac{1}{2}\frac{\ud}{\ud t} \nrm{\widetilde{\chi}_k b}_{L^2}^2 - \brk{\widetilde{\chi}_k(\bfPi\cdot\nabla u) ,\widetilde{\chi}_kb}  - \brk{\widetilde{\chi}_k\errh_{b},\widetilde{\chi}_k b } \right| \lesssim  (\mu^{-k} + \nrm{\widetilde{\chi}_k u}_{L^2}+\nrm{\widetilde{\chi}_k b}_{L^2})\nrm{\widetilde{\chi}_k b}_{L^2} 
\end{split}
\end{equation*}where again the implicit constant depends on $M + \nrm{\bfV}_{W^{1,\infty}} + \nrm{\bfPi}_{W^{2,\infty}}$. Then note that \begin{equation*}
\begin{split}
\left| \brk{\widetilde{\chi}_k(\bfPi\cdot\nabla u) ,\widetilde{\chi}_kb}  + \brk{\widetilde{\chi}_k(\bfPi\cdot\nabla b) ,\widetilde{\chi}_ku} \right| \lesssim \nrm{\widetilde{\chi}_k'}_{L^\infty} \nrm{|\bfPi|b}_{L^2} \nrm{\widetilde{\chi}_k u}_{L^2} \lesssim \mu^{-k}\nrm{\widetilde{\chi}_k u}_{L^2} 
\end{split}
\end{equation*} after integrating by parts. Moreover, recall the cancellation property $\brk{\errh_u,u} + \brk{\errh_b,b} = 0$: from this it is straightforward to see that \begin{equation*}
\begin{split}
\left| \brk{\widetilde{\chi}_k\errh_{b},\widetilde{\chi}_k b }  + \brk{\widetilde{\chi}_k\errh_{u},\widetilde{\chi}_k u }  \right| \lesssim \nrm{\widetilde{\chi}_k'}_{L^\infty}\left( \nrm{\widetilde{\chi}_k u}_{L^2} + \nrm{\widetilde{\chi}_k b}_{L^2} \right)
\end{split}
\end{equation*} where the implicit constant depends on $M$. Collecting the estimates, we conclude that \begin{equation*}
\begin{split}
\left|\frac{1}{2}\frac{\ud}{\ud t} \left( \nrm{\widetilde{\chi}_k u}_{L^2}^2 +\nrm{\widetilde{\chi}_k b}_{L^2}^2  \right)\right|\lesssim (\mu^{-k} + \nrm{\widetilde{\chi}_k u}_{L^2} +\nrm{\widetilde{\chi}_k b}_{L^2})(\nrm{\widetilde{\chi}_k u}_{L^2} +\nrm{\widetilde{\chi}_k b}_{L^2}). 
\end{split}
\end{equation*} Since at the initial time, we have for some absolute constant $c >0$, \begin{equation*}
\begin{split}
(\nrm{\widetilde{\chi}_k u}_{L^2} +\nrm{\widetilde{\chi}_k b}_{L^2})(t = 0) = \nrm{\widetilde{\chi}_k b}_{L^2}(t = 0) \simeq  2^{-ck-Nsk}, 
\end{split}
\end{equation*} we may take $\mu \gg 1$ in a way depending only on $N$ and $s$ that $\mu^{-k} \lesssim \nrm{\widetilde{\chi}_k b}_{L^2}(t = 0) $. Then from Gronwall's inequality, we obtain \begin{equation*}
\begin{split}
\nrm{\widetilde{\chi}_k u}_{L^2} +\nrm{\widetilde{\chi}_k b}_{L^2} \lesssim \nrm{\widetilde{\chi}_k b(t = 0)}_{L^2},
\end{split}
\end{equation*} uniformly on $[0,\dlt]$ with an implicit constant independent of $k$. 


Lastly it only remains to localize a version of the generalized energy estimate; we claim that \begin{equation*}
\begin{split}
\left| \frac{\ud}{\ud t} \left( \brk{\widetilde{\chi}_k u, \chi_k \tu_k - \mathrm{div}_{U_k}^{-1}(\chi_k'\tu_k^z) } +\brk{\widetilde{\chi}_k b, \chi_k \tb_k } \right) \right| \lesssim 2^{C_sk} \nrm{\widetilde{\chi}_k b(t = 0)}_{L^2}\nrm{\chi_k\tb_k(t=0)}_{L^2}. 
\end{split}
\end{equation*} The proof is completely parallel to Section \ref{subsec:hallmhd-axisym}; the background solution $(\bfV, \bfPi)$ satisfies Sobolev and smallness estimates uniformly, both in $t \in [0,\dlt]$ and $k$. Moreover, the additional terms appearing in the above (in comparison with Section \ref{subsec:hallmhd-axisym}) only come from the additional localization $\widetilde{\chi}_k$ and hence such contributions can be bounded in terms of $\nrm{\widetilde{\chi}_k'}_{L^\infty}$ which is at the same order with $\nrm{\widetilde{\chi}_k b(t = 0)}_{L^2}$. 

Given the above, we have at once that \begin{equation*} 
\begin{split}
& \brk{\widetilde{\chi}_k b, \chi_k \tb_k } \ge (1 - c) \nrm{\widetilde{\chi}_k b(t = 0)}_{L^2}\nrm{\chi_k\tb_k(t=0)}_{L^2}
\end{split}
\end{equation*} on the time interval $[0,T_k]$ where $T_k = c(2^{-k}\lmb_k)^{-1} \ln(2^{-k}\lmb_k)$ with some absolute constant $c>0$. Certainly $T_k$ is decreasing in $k$ and $T_k \le \dlt $ for $k$ sufficiently large. Recall that the degeneration estimate is given by \begin{equation*} 
\begin{split}
& \nrm{\tpsi_k(t)}_{L^2_{x,y}} \lesssim \lmb_k^{-1} e^{-c_02^{-k}\lmb_k t} \nrm{\tb_k(t=0)}_{L^2_{x,y}}. 
\end{split}
\end{equation*} We then argue along the lines of Sections \ref{subsec:hallmhd-axisym} and \ref{subsec:nonlinear-hallmhd}, resulting in \begin{equation*} 
\begin{split}
& \nrm{\widetilde{\chi}_k b(T_k)}_{H^{s}} \gtrsim \eps \lmb_k^c \nrm{\widetilde{\chi}_kb_0}_{H^s} \gtrsim \eps \lmb_k^{\frac{c}{2}}
\end{split}
\end{equation*} where $c>0$ is an absolute constant. Finally, observing that \begin{equation*} 
\begin{split}
& \nrm{\widetilde{\chi}_k b(T_k)}_{H^{s}} \le \nrm{\widetilde{\chi}_k}_{H^s} \nrm{b(T_k)}_{H^s} \lesssim (1+\mu^{-1}) \nrm{b(T_k)}_{H^s}
\end{split}
\end{equation*} and taking $k \rightarrow +\infty $ shows that $\liminf_{t \rightarrow 0^+} \nrm{b(t)}_{H^s} = +\infty ,$ which is a contradiction. \hfill \qedsymbol

\bibliographystyle{amsplain}
\bibliography{hallmhd}

\end{document}